\newtheorem{thm}{Theorem}[section]
\newtheorem{corollary}{Corollary}
\newtheorem{lm}{Lemma}
\newtheorem{pp}{Proposition}
\newtheorem{claim}{Claim}
\theoremstyle{definition}
\newtheorem{df}{Definition}
\newtheorem{assumption}{Assumption}
\newcommand*\diff{\mathop{}\!\mathrm{d}}
\newcommand{\ep}{\epsilon}
\newcommand{\lge}{\langle}
\newcommand{\rge}{\rangle}
\newcommand{\zm}{{Z^m}}
\newcommand{\zmm}{{Z^{m-1}}}
\newcommand{\zi}{{\zeta^{(i)}}}
\newcommand{\zj}{{\zeta^{(j)}}}
\newcommand{\zk}{{\zeta^{(k)}}}
\newcommand{\zl}{{\zeta^{(l)}}}
\newcommand{\nxxi}{{\mathcal{N}(\vec{x}, \vec{\xi})}}
\newcommand{\ntxxi}{{{\mathcal{R}}(\vec{x}, \vec{\xi})}}
\newcommand{\unionGamma}{\Gamma({\vec{x}, \vec{\xi}})}
\newcommand{\capgamma}{{\cap_{j=1}^{4}\gamma_{x_j, \xi_j}(\mathbb{R}_+)}}
\newcommand{\tw}{{\widetilde{w}}}
\newcommand{\wfset}{{\text{\( \WF \)}}}
\newcommand{\tQ}{Q}
\newcommand{\bx}{x_|}
\newcommand{\bxi}{\xi_|}
\newcommand{\bTM}{{}^{b}\dot{T}^* M}
\newcommand{\intM}{M^{{o}}}
\newcommand{\Char}{\mathrm{Char}}
\newcommand{\gsetint}{\mathcal{G}^{\mathrm{int}}}
\newcommand{\compchar}{\dot{\Sigma}_g}
\newcommand{\Ical}{\mathcal{I}}
\newcommand{\Ir}{\mathring{\mathcal{I}}}
\newcommand{\sigmp}{{ \sigma_p}}
\newcommand{\pM}{\partial M}
\newcommand{\tM}{{M_{\mathrm{e}}}}
\newcommand{\Fk}{F^{(k)}}
\newcommand{\pk}{p^{(k)}}
\newcommand{\betak}{\beta^{(k)}}
\newcommand{\TMpm}{T_{\partial M, \pm} M}
\newcommand{\LcMpm}{L^{*}_{\partial M, \pm}M}
\newcommand{\LcMpo}{L^{*}_{\partial M, +}M}
\newcommand{\tO}{\Omega_{\mathrm{e}}}
\newcommand{\Qb}{Q^\mathrm{bvp}}
\newcommand{\yb}{y_|}
\newcommand{\etab}{\eta_|}
\newcommand{\tf}{p_f}
\newcommand{\tp}{\tilde{p}}
\DeclareMathOperator{\WF}{WF}
\DeclareMathOperator{\supp}{supp}
\DeclareMathOperator{\ufour}{\mathcal{U}^{(4)}}
\DeclareMathOperator{\codim}{codim}
\newcommand{\Rthree}{\mathbb{R}^3}
\newcommand{\ptx}{p(t,x')}
\newcommand{\epslam}{{\partial_{\epsilon_1}\partial_{\epsilon_2}\partial_{\epsilon_3}\partial_{\epsilon_4} \Lambda(f) |_{\epsilon_1 = \epsilon_2 = \epsilon_3 = \ep_4=0}}}
\newcommand{\lambdaFone}{\Lambda_{F^{(1)}}}
\newcommand{\lambdaFtwo}{\Lambda_{F^{(2)}}}
\newcommand{\sq}{\square_c}
\newcommand{\Qbg}{Q_g^{\text{bvp}}}
\newcommand{\Mo}{\mathbb{R} \times \Omega}
\newcommand{\pMo}{\mathbb{R} \times \partial \Omega}
\newcommand{\ziz}{\zeta_0^{(i)}}
\newcommand{\zjz}{\zeta_0^{(j)}}
\newcommand{\zkz}{\zeta_0^{(k)}}
\newcommand{\zlz}{\zeta_0^{(l)}}
\newcommand{\zh}{\hat{\zeta}}
\newcommand{\thett}{({\theta}/{2})}
\newcommand{\costt}{\cos \theta}
\newcommand{\sintw}{\sin \thett}
\newcommand{\costw}{\cos \thett}
\newcommand{\Wset}{\mathbb{W} = \bigcup_{y^-, y^+ \in (0,T) \times \partial \Omega} I(y^-, y^+) \cap \intM}
\newcommand{\ba}{\[\begin{aligned}}
\newcommand{\ea}{\end{aligned}\]}
\title[Nonlinear ultrasound imaging]{An Inverse Boundary Value Problem arising in Nonlinear Acoustics} 
\author{Gunther Uhlmann and Yang Zhang}
\begin{document}
\maketitle

\begin{abstract}
{
We consider an inverse problem arising in nonlinear ultrasound imaging.
The propagation of ultrasound waves is modeled by a quasilinear wave equation. 
We make measurements at the boundary of the medium encoded in the Dirichlet-to-Neumann map, and we show that
these measurements determine the nonlinearity.
}
\end{abstract}

\section{Introduction}
{
Nonlinear ultrasound waves are widely used in medical imaging,
since the propagation of high-intensity ultrasound is 
not adequately modeled by linear acoustic equations, see \cite{humphrey2003non}. 
It has many applications in diagnostic and therapeutic medicine, for example, the probe of tissues, the visualization of blood flow, and the potential application in monitoring patients in the operating room.
For more details and other applications, see \cite{Anvari2015, Aubry2016,Demi2014,Demi2014a,Eyding2020,Fang2019, Gaynor2015, Harrer2003,Hedrick2005,Soldati2014, Szabo2014, Haar2016, Thomas1998,Webb2017}.

We consider  a bounded domain  $\Omega \subset \Rthree$ with smooth boundary.
Let  $(t,x') \in  \mathbb{R} \times \Omega$ and $c(x') > 0$ be the sound speed.
Let $\ptx$ denote the pressure field of the ultrasound waves. 
A model for the pressure field in the medium $\Omega$ with no memory terms is given by (see \cite{Kaltenbacher2021})
\begin{equation}\label{eq_problem}
\begin{aligned}
 \partial_t^2 \ptx  - c^2(x') \Delta \ptx
- F(x', p,\partial_t p, \partial^2_t p) &= 0, & \  & \mbox{in } (0,T) \times \Omega,\\
\ptx &= f, & \ &\mbox{on } (0,T) \times \partial \Omega,\\
p = {\partial_t p} &= 0, & \  & \mbox{on } \{t=0\},
\end{aligned}
\end{equation}
where $f$ is the insonation profile on the boundary and $F$ is the nonlinear term modeling the nonlinear
interaction of the waves.


{
When $F(x', p,\partial_t p, \partial^2_t p) = \beta(x') \partial_t^2 (p^2)$,
this equation is called the Westervelt equation.
In this case, the inverse problem of recovering $\beta$ from some
measurements of the waves is studied in \cite{nonlinearWestervelt, ultra21}. 
The Westervelt equation can be regarded as a lower order approximation to the more complicated physical model (\ref{eq_problem}).  
In \cite{Kaltenbacher2021}, an inverse problem modeled by (\ref{eq_problem}) with a more general  
nonlinear term given by 
\begin{align}\label{def_F}
F(x', p,\partial_t p, \partial^2_t p) = \partial_t(f_W(x',p) \partial_t p)
\end{align}
is considered. 
In this paper, 
we study the inverse boundary value problem of recovering the general nonlinear term of the form (\ref{def_F}) from the boundary measurements $\Lambda_F$, 
where  $\Lambda_F$ is the Dirichlet-to-Neumann (DN) map given by
\[
\Lambda_F f = \partial_\nu p|_{(0, T) \times \partial \Omega},
\]
with $\nu$ as the outer unit normal vector to $\partial \Omega$.
We additionally assume that $f_W(x',p)$ is analytic in $p$ and has the expansion
\[
f_W(x',p)  = \sum_{m=1}^{+\infty} b_{m}(x') p^m
\]
with $b_m(x')$ smooth.}
Then we can rewrite it as
\[
F(x', p,\partial_t p, \partial^2_t p) = \sum_{m=1}^{+\infty} \beta_{m+1}(x') \partial_t^2 (p^{m+1}),
\]
where $\beta_{m+1}(x') = b_m(x')/(m+1)$ is smooth.
More generally, our method works for the case when the nonlinear coefficient $\beta_m$ depends on both $t$ and $x'$, i.e., we consider the nonlinear term
\begin{align}\label{eq_nlterm}
F(x, p,\partial_t p, \partial^2_t p) = \sum_{m=1}^{+\infty} \beta_{m+1}(x) \partial_t^2 (p^{m+1}).
\end{align}
We make the assumption that
\begin{equation}\tag{A}\label{assum_F}
\begin{aligned}
&\sum_{m=1}^{+\infty} \beta_{m+1}(x)  p^{m+1} \text{ is analytic in } p
\text{ and} \text{ there is } m \geq 1 \text{ such that }
\beta_{m+1}(x) \neq 0,
\end{aligned}
\end{equation}
}
\subsection{Main Results.}
We use the notation $x= (t,x') = (x^0, x^1, x^2, x^3) $ and $M = \mathbb{R} \times \Omega$ in some cases for convenience. 
Consider the Riemannian metric
\begin{align}\label{eq_g0}
g_0 = c^{-2}(x') ((\diff x^1)^2 + (\diff x^2)^2 + (\diff x^3)^2)
\end{align}
w.r.t. the wave speed $c(x')>0$ on $\Omega$.
\begin{assumption}\label{assum_Omega}
Suppose the Riemannian manifold $(\Omega, g_0)$ is nontrapping and $\partial \Omega$ is strictly convex w.r.t. $g_0$.
By nontrapping, we mean there exists $T>0$ such that
   \[
    \mathrm{diam}_{g_0}(\Omega) = \sup\{\text{lengths of all geodesics in } (\Omega, g_0)\} < T.
    \]
\end{assumption}
First suppose that each $\beta_{m+1}(x'), m \geq 1$ is independent of $t$ and the sound speed $c(x')>0$ is smooth and known.  
We have the following result.
\begin{thm}\label{thm1}
Let $(\Omega, g_0)$ satisfy Assumption \ref{assum_Omega}.
Consider the nonlinear wave equation
\[
 \partial_t^2 p - c^2(x')\Delta p - F(x', p,\partial_t p, \partial^2_t p) = 0,
\]
where the nonlinear term depending on $x'$ smoothly has the expansion
\begin{align*}
F(x', p,\partial_t p, \partial^2_t p) = \sum_{m=1}^{+\infty} \beta_{m+1}(x') \partial_t^2 (p^{m+1})
\end{align*}
and satisfies the assumption (\ref{assum_F}) for each $x' \in \Omega$.
Let $p^{(k)}$ be the solutions corresponding to
$\Fk(x', \pk,\partial_t \pk, \partial^2_t \pk)$, for $k=1,2$.
If the Dirichlet-to-Neumann maps satisfy
\[
\lambdaFone(f) = \lambdaFtwo(f)
\] 
for all $f$ in a small neighborhood of the zero functions in $C^6([0,T] \times \partial \Omega)$,
then
$\beta^{(1)}_m(x') = \beta^{(2)}_m(x')$,
for any $m \geq 2$ and $x' \in \Omega$.
\end{thm}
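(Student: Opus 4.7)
The plan is to apply higher-order linearization to the Dirichlet-to-Neumann map, extract integral identities against the coefficient differences, and localize via a multi-wave interaction argument. The induction will be on $m \geq 1$, recovering $\beta_{m+1}$ at step $m$. First I would consider boundary data $f = \sum_{k=1}^{N} \epsilon_k f_k$ depending on small parameters $\vec{\epsilon} = (\epsilon_1,\ldots,\epsilon_N)$, and denote by $p_{\vec{\epsilon}}$ the corresponding solution of \eqref{eq_problem}. The first-order derivatives $u_k := \partial_{\epsilon_k} p_{\vec{\epsilon}}\big|_{\vec{\epsilon}=0}$ each satisfy the \emph{linear} wave equation $\partial_t^2 u_k - c^2 \Delta u_k = 0$ with Dirichlet data $f_k$ and vanishing Cauchy data at $t=0$, independently of the nonlinearity. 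For $N = m+1$, the $N$-fold mixed derivative of $p_{\vec{\epsilon}}$ satisfies an inhomogeneous linear wave equation whose source contains, as its only contribution involving $\beta_{m+1}$, the term $(m+1)!\,\beta_{m+1}(x')\,\partial_t^2(u_1 \cdots u_{m+1})$, together with contributions built from $\beta_2,\ldots,\beta_m$ and mixed derivatives of $p_{\vec{\epsilon}}$ of strictly lower order.

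Pairing this inhomogeneous equation with an adjoint linear wave $u_0$ that vanishes at $t=T$ and carries prescribed Dirichlet data $f_0$, integrating by parts, and invoking $\Lambda_{F^{(1)}}(f) = \Lambda_{F^{(2)}}(f)$, I obtain the integral identity
\begin{equation*}
(m+1)!\int_{(0,T)\times\Omega}\bigl(\beta_{m+1}^{(1)}-\beta_{m+1}^{(2)}\bigr)(x')\,\partial_t^2(u_1\cdots u_{m+1})(t,x')\,u_0(t,x')\,\diff x\,\diff t = R_m,
\end{equation*}
where $R_m$ collects contributions proportional to $\beta_j^{(1)}-\beta_j^{(2)}$ for $2\leq j \leq m$. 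The inductive hypothesis $\beta_j^{(1)} = \beta_j^{(2)}$ for $j \leq m$ forces $R_m = 0$, and the task reduces to showing that the vanishing of the left-hand side for a sufficiently rich family of test waves forces $\beta_{m+1}^{(1)} = \beta_{m+1}^{(2)}$ pointwise.

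To localize at an arbitrary interior point $y\in\Omega$ and time $t_0\in(0,T)$, I would take the $u_k$ to be distorted plane waves produced by a geometric optics construction along null geodesics of the Lorentzian metric $-\diff t^2 + g_0$ on $\mathbb{R}\times\Omega$. Assumption \ref{assum_Omega}, that $(\Omega,g_0)$ is nontrapping with $\partial\Omega$ strictly convex, ensures that for any future-directed lightlike covector $\zeta$ based at $(t_0,y)$ the null geodesic emanating backward from $(t_0,y,\zeta)$ meets $(0,T)\times\partial\Omega$ transversally, so appropriate Dirichlet data $f_k$ realize the desired wave profiles. Selecting $m+2$ covectors $\zeta_0,\ldots,\zeta_{m+1}$ meeting at $(t_0,y)$ in a generic nondegenerate configuration, a principal symbol computation applied to the $(m+2)$-fold product $u_0\cdot\partial_t^2(u_1\cdots u_{m+1})$ extracts a leading coefficient proportional to $\bigl(\beta_{m+1}^{(1)}-\beta_{m+1}^{(2)}\bigr)(y)$; varying $y$ over $\Omega$ then gives the conclusion of the theorem.

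The main obstacle is the microlocal analysis of the $(m+2)$-fold product of Lagrangian distributions. Such products are not a priori well-defined: the transversality of the wavefront sets of the distorted plane waves must be verified for the chosen configurations, and the nonlinear interaction generates new ``artifact'' singularities along forward bicharacteristics, which must be shown to propagate to $(0,T)\times\partial\Omega$ and contribute a nonzero, computable principal symbol to the trace that $\Lambda_F$ detects. For large $m$ the combinatorial bookkeeping of cross terms absorbed into $R_m$ is also delicate, though the induction hypothesis together with the structural similarity to the $m=1$ Westervelt case studied in \cite{nonlinearWestervelt, ultra21} should allow the three-wave argument there to extend to an $(m+2)$-wave argument without fundamentally new difficulties.
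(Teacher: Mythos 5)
Your overall strategy (higher-order linearization plus distorted plane waves meeting at a point) is in the spirit of the paper, but the induction scheme you propose has a genuine gap at its base, and this gap is exactly where the paper's main work lies. Your step $m=1$ asks to recover $\beta_2$ from the second-order linearization paired with an adjoint wave $u_0$, i.e.\ from a three-wave configuration $u_0,u_1,u_2$. For the wave operator in $1+3$ dimensions, three lightlike covectors can sum to zero only in degenerate (parallel) configurations, so the leading symbol of your integral identity vanishes for every nondegenerate choice of waves and the localization argument does not close; this is precisely why the literature, and this paper, work with three \emph{incoming} waves plus one outgoing direction (a four-wave interaction). Moreover, even at the next steps the source terms are not ``contributions proportional to $\beta_j^{(1)}-\beta_j^{(2)}$ for $j\le m$'' in a way the induction can absorb: the third-order linearization produces the combination $2\sum (\zeta_0^{(j)}+\zeta_0^{(k)})^2|\zeta^{(j)}+\zeta^{(k)}|_{g^*}^{-2}\beta_2^2-\beta_3$ (Proposition \ref{pp_Aijk}) and the fourth-order one produces $-C\beta_2^3+D\beta_2\beta_3-\beta_4$ (Proposition \ref{pp_ufour}), with direction-dependent coefficients $C,D$. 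The paper's resolution is to fix $q$ and choose three different angular configurations of the four covectors so that the vectors $(-C,D,-1)$ are linearly independent; establishing this nondegeneracy is the content of Lemma \ref{lm_construction}, a delicate expansion in $s=\sin(\theta/2)$, and nothing in your proposal substitutes for it. (Your separate fallback via the third-order data is also what the paper uses, but only for $\beta_3$ in the degenerate case $\beta_2(q)=0$, after $\beta_2$ and $\beta_4$ have already been determined from the fourth-order system.)

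The second gap is for $m\ge 4$: you propose an $(m+2)$-fold product of \emph{distinct} conormal waves meeting at a point ``in a generic nondegenerate configuration,'' but in a four-dimensional spacetime at most four conormal directions at $q$ can be linearly independent, and the product of more than four such Lagrangian distributions is not controlled by the transversality calculus you invoke. The paper avoids this entirely: for the $N$-th order linearization it still uses only four geodesics, taking $v_1^{N-3}v_2v_3v_4$ with $v_1$ in the Piriou class $\mathring{\mathcal{I}}^{\mu}(\Lambda_1)$ of conormal distributions vanishing to high order, so that the power $v_1^{N-3}$ is again conormal with principal symbol given by fiberwise convolution (Section \ref{sec_Piriou}); only then does the four-wave symbol calculus of Section \ref{Sec_fourwaves} apply and isolate $\beta_N$ after subtracting the already-determined lower-order contributions (Section \ref{sec_N}). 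Two further points you would also need: Theorem \ref{thm1} is deduced from the interior result via Proposition \ref{pp_thm1}, which uses Assumption \ref{assum_Omega} to show every $x_0'\in\Omega$ lies under some $(t_q,x_0')\in\mathbb{W}$; and since $\partial M$ is a timelike boundary, all of the symbol computations must be carried out for the boundary solution operator $\Qb_g$ with its reflected waves and broken bicharacteristics (Section \ref{subsec_Qg}), not for the free-space parametrix your sketch implicitly assumes.
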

If the wave speed $c(x')$ is unknown, one can recover it from the first order linearization of the DN map, i.e., the DN map for the linear problem, assuming it is smooth, see \cite{Belishev1987}.
For the stable recovery with partial data, see \cite{MR3454376}.
After recovering $c(x')$, one can recover $\beta_m$ for $m\geq 2$ by Theorem \ref{thm1}.
This gives the following result.

\begin{thm}\label{cor1a}
    For $k = 1,2$, suppose the wave speeds $c^{(k)}(x') > 0$ are smooth.
    Let $g_0^{(k)}$ be the Riemannian metrics corresponding to $c^{(k)}(x')$, see (\ref{eq_g0}). 
    Suppose $(\Omega, g^{(k)}_0)$ satisfy Assumption \ref{assum_Omega}.
    Consider the  nonlinear wave equations 
    \[
    \partial_t^2 \pk - (c^{(k)}(x'))^2\Delta \pk - \Fk(x', \pk,\partial_t \pk, \partial^2_t \pk) = 0, \quad k = 1,2,
    \]
    where the nonlinear terms depending on $x'$ smoothly have the expansion
    \begin{align*}
    \Fk(x', \pk,\partial_t \pk, \partial^2_t \pk) = \sum_{m=1}^{+\infty} \beta_{m+1}(x') \partial_t^2 ((\pk)^{m+1})
    \end{align*}
    and satisfy the assumption in (\ref{assum_F}) for each $x' \in \Omega$.
    If the Dirichlet-to-Neumann maps satisfy
    \[
    \lambdaFone(f) = \lambdaFtwo(f)
    \] 
    for all $f$ in a small neighborhood of the zero functions in $C^6([0,T] \times \partial \Omega)$,
    then $c^{(1)}(x') = c^{(2)}(x') $ and 
    $\beta^{(1)}_m(x') = \beta^{(2)}_m(x')$,
    for any $m \geq 2$ and $x' \in \Omega$.
\end{thm}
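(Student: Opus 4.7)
The plan is to reduce Theorem~\ref{cor1a} to Theorem~\ref{thm1} by first extracting the sound speed from the first-order linearization of the DN map, and then appealing to Theorem~\ref{thm1} with the common wave speed.

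First I would verify that the Fréchet derivative of $\Lambda_{F^{(k)}}$ at $f=0$ equals the DN map of the linear wave operator $\partial_t^2 - (c^{(k)}(x'))^2 \Delta$ with zero Cauchy data. To see this, take $f = \epsilon h$ with $h \in C^6([0,T]\times\partial\Omega)$ of small norm and let $p^{(k)}_\epsilon$ be the corresponding solution. Since every summand of $F^{(k)}$ is of the form $\beta_{m+1}(x')\, \partial_t^2(p^{m+1})$ with $m \geq 1$, the nonlinearity vanishes to order $2$ in $p$. Standard well-posedness for quasilinear wave equations with small data and a Picard-type expansion then yield $p^{(k)}_\epsilon = \epsilon u^{(k)} + O(\epsilon^2)$ in a suitable norm, where $u^{(k)}$ solves
\[
\partial_t^2 u^{(k)} - (c^{(k)}(x'))^2 \Delta u^{(k)} = 0, \quad u^{(k)}|_{(0,T)\times\partial\Omega} = h, \quad u^{(k)}|_{t=0} = \partial_t u^{(k)}|_{t=0} = 0.
\]
Differentiating the identity $\Lambda_{F^{(1)}}(\epsilon h) = \Lambda_{F^{(2)}}(\epsilon h)$ at $\epsilon = 0$ then gives $\partial_\nu u^{(1)} = \partial_\nu u^{(2)}$ on $(0,T)\times\partial\Omega$, i.e., the linear DN maps coincide.

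Next, by Assumption~\ref{assum_Omega}, each $(\Omega, g_0^{(k)})$ is nontrapping with strictly convex boundary, and $T$ exceeds both geodesic diameters. The agreement of the linear DN maps then lets me invoke the boundary control method of \cite{Belishev1987} to conclude $c^{(1)}(x') = c^{(2)}(x')$ on $\Omega$. This is exactly the reduction advertised in the paragraph preceding the theorem; the reference to \cite{MR3454376} would be needed only if partial data were considered, which is not the case here.

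With $c := c^{(1)} = c^{(2)}$ fixed, both nonlinear problems share the same linear principal part $\partial_t^2 - c^2(x')\Delta$, and $(\Omega, g_0)$ satisfies Assumption~\ref{assum_Omega}. The hypotheses of Theorem~\ref{thm1} are then met, and that theorem immediately gives $\beta^{(1)}_m(x') = \beta^{(2)}_m(x')$ for all $m \geq 2$. The main obstacle is the first step: rigorously identifying the first Fréchet derivative of $\Lambda_{F^{(k)}}$ at $0$ with the linear DN map. Once the DN map is shown to be $C^1$ in $f$ near $0$ in the $C^6$ topology (via the well-posedness of the quasilinear problem and the quadratic vanishing of the nonlinearity), the rest of the argument is an assembly of two external ingredients: Belishev's recovery of the wave speed and Theorem~\ref{thm1}.
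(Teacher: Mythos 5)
Your proposal follows essentially the same route as the paper: the paper's justification of this theorem is precisely the two-step reduction you describe, namely that the first-order linearization of $\Lambda_{F^{(k)}}$ at $f=0$ is the DN map of the linear wave equation, from which $c^{(k)}$ is recovered by the boundary control method of \cite{Belishev1987}, after which Theorem \ref{thm1} applies with the common speed. Your additional detail on justifying the first linearization via the well-posedness and the quadratic vanishing of $F^{(k)}$ is consistent with the paper's framework (Section \ref{Sec_well} and the expansion in Section \ref{subsec_assyp}) and does not constitute a different argument.
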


The result in Theorem \ref{thm1} can be regarded as an example of a more general setting. 
Recall $M = \mathbb{R} \times \Omega$ and let $\intM$ be the interior of $M$.
The linear part of the equation in (\ref{eq_problem}) corresponds to the Lorentzian metric
\begin{align*}
g = - \diff t^2  + g_0 = - \diff t^2 + c^{-2}(x') (\diff x')^2 
\end{align*}
and one denote
\[
 \sq \ptx \equiv  \partial_t^2 \ptx  - c^2(x')\Delta \ptx.
\]
Strictly speaking, the operator $\sq$ is not the Laplace-Beltrami operator $\square_g$ 
but it has the same principal symbol as $\square_g$.
More generally, we can assume the smooth speed $c(t,x')>0$ depends on $t$ as well. 

Note that $(M,g)$ is a globally hyperbolic Lorentzian manifold with timelike boundary $\partial M = \mathbb{R} \times \partial \Omega$.
Additionally, we assume $\partial M$ is null-convex.
Here $\partial M$ is null-convex if for any null vector $v \in T_p \partial M$ one has
\begin{eqnarray}\label{def_nconvex}
\kappa(v,v) = g (\nabla_v \nu, v) \geq 0,
\end{eqnarray}
where we denote by $\nu$ the outward pointing unit normal vector field on $\partial M$.
This is true especially when $\partial \Omega$ is convex w.r.t $g_{0}$.
In the following, we consider a globally hyperbolic Lorentzian manifold $(M,g)$ with timelike and null-convex boundary. 

We first introduce some definitions to state the main results.
A smooth path $\mu:(a,b) \rightarrow M$ is timelike if $g(\dot{\mu}(s), \dot{\mu}(s)) <0 $ for any $s \in (a,b)$.
It is causal if $g(\dot{\mu}(s), \dot{\mu}(s)) \leq 0 $ and $\dot{\mu}(s) \neq 0$ for any $s \in (a,b)$.
For $p, q \in M$, we denote by $p < q$ (or $p \ll q$) if $p \neq q$ and there is a future pointing casual (or timelike) curve from $p$ to $q$.
We denote by $p \leq q$ if either $p = q$ or $p<q$.
The chronological future of $p$ is the set $I^+(p) = \{q \in M: \ p \ll q\}$
and the causal future of $p$ is the  set $J^+(p) = \{q \in M: \ p \leq q\}$.
Similarly we can define the chronological past $I^-(p)$ and the causal past $J^-(p)$.
For convenience, we use the notation $J(p,q)= J^+(p) \cap J^-(q)$ to denote the diamond set $J^+(p) \cap J^-(q)$ and
$I(p,q)$ to denote the set $I^+(p) \cap I^-(q)$, see Figure \ref{picref}.
\begin{figure}[h]
    \begin{subfigure}[t]{0.5\textwidth}
    \centering
    \includegraphics[height=0.7\textwidth]{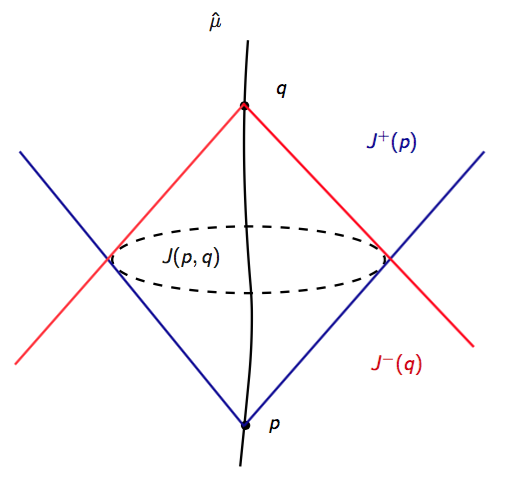}
    \end{subfigure}%
    \begin{subfigure}[t]{0.5\textwidth}
    \centering
    \includegraphics[height=0.75\textwidth]{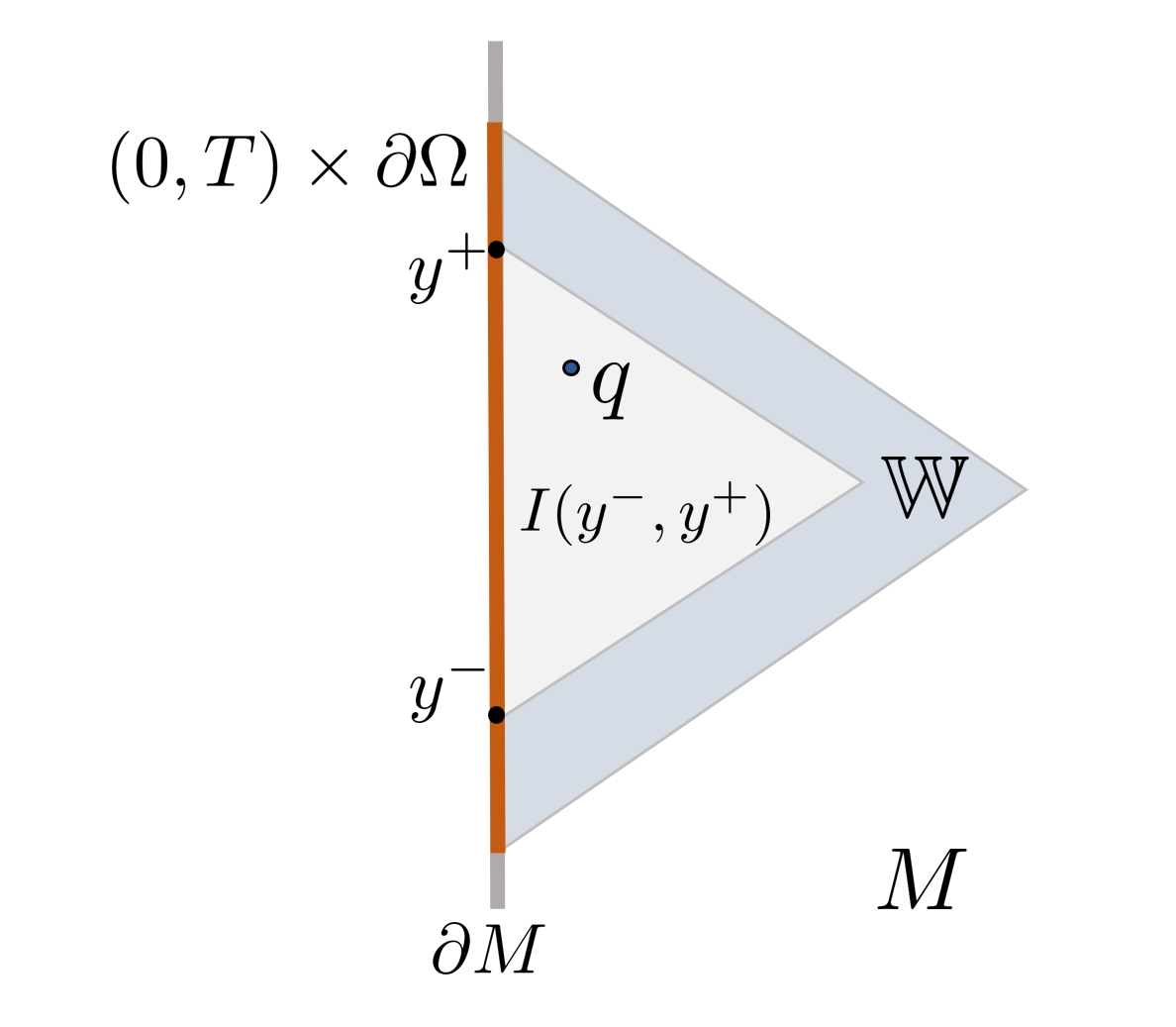}
    \end{subfigure}%
\caption{Left: the causal future $J^+(p)$, causal past $J^-(q)$, and the diamond set $J(p,q)$. Right: the set $\mathbb{W}$.}
\label{picref}
\end{figure}
We consider the recovery of the nonlinear coefficients in a suitable larger set
\[
\Wset.
\]
{
\begin{thm}\label{thm2}
Let $\Omega$ be a bounded domain with smooth boundary in $\mathbb{R}^3$. 
Let $M = \mathbb{R} \times \Omega$ and $g =  - \diff t^2 + c^{-2}(t,x')(\diff x')^2$ with $c(t,x') > 0$ for any $x'\in \Omega$. 
Suppose the globally hyperbolic Lorentzian manifold $(M,g)$ has timelike and null-convex boundary. 
We define the wave operator 
\[
\sq \ptx \equiv \partial_t^2 \ptx  - c^2(t,x')\Delta \ptx. 
\]
Consider the nonlinear wave equation
\[
\sq p^{(k)} - \Fk(x, \pk,\partial_t \pk,\partial^2_t \pk) = 0, \quad  k=1,2,
\]
where the nonlinear terms depending on $x$ smoothly have the expansions
\begin{align*}
\Fk(x, \pk,\partial_t \pk, \partial^2_t \pk) = \sum_{m=1}^{+\infty} \beta^{(k)}_{m+1}(x) \partial_t^2 ((p^{(k)})^{m+1})
\end{align*}
and satisfy the assumption in (\ref{assum_F}) for each $x \in \mathbb{W}$.
If the Dirichlet-to-Neumann maps  satisfy
\[
\Lambda^{(1)}_{F^{(1)}}(f) = \Lambda^{(2)}_{F^{(2)}}(f)
\] 
for all functions $f$ in a small neighborhood of the zero functions in $C^6([0,T] \times \partial \Omega)$,
then
$
\beta^{(1)}_m = \beta^{(2)}_m,$
{for any } $m \geq 2$ and $x \in \mathbb{W}$.
\end{thm}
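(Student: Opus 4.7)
The plan is to run the higher-order linearization scheme for nonlinear wave equations, adapted to the Lorentzian geometry of $(M,g)$ and to the recovery set $\mathbb{W}$, by induction on $m \geq 1$. The inductive hypothesis is that $\beta^{(1)}_k \equiv \beta^{(2)}_k$ on $\mathbb{W}$ for all $2 \leq k \leq m$ (vacuous when $m=1$, which is the base case recovering $\beta_2$), and the target is to deduce $\beta^{(1)}_{m+1} \equiv \beta^{(2)}_{m+1}$ on $\mathbb{W}$. Given boundary data $f = \sum_{j=1}^{m+1} \epsilon_j f_j$, one introduces $u_j$ solving the linear problem $\sq u_j = 0$ with Dirichlet data $f_j$ and zero past Cauchy data, and computes the mixed derivative $v^{(k)} = \partial_{\epsilon_1}\cdots\partial_{\epsilon_{m+1}} p^{(k)}|_{\epsilon=0}$. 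By Faà di Bruno, $v^{(k)}$ solves
\[
\sq v^{(k)} = (m+1)!\,\beta^{(k)}_{m+1}(x)\,\partial_t^2\bigl(u_1\cdots u_{m+1}\bigr) + R^{(k)},
\]
where the remainder $R^{(k)}$ collects products of fewer than $m+1$ factors $u_j$ coupled to lower-order nonlinear feedback, each weighted by some $\beta^{(k)}_j$ with $2\leq j\leq m$. The inductive hypothesis forces $R^{(1)} = R^{(2)}$, so $w := v^{(1)} - v^{(2)}$ satisfies $\sq w = (m+1)!\,(\beta^{(1)}_{m+1}-\beta^{(2)}_{m+1})\,\partial_t^2(u_1\cdots u_{m+1})$ with zero Dirichlet data, zero past Cauchy data, and (from equality of the DN maps) zero Neumann trace on $\partial M$.

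To test this identity at a prescribed $q\in\mathbb{W}$, choose $y^-,y^+\in(0,T)\times\partial\Omega$ with $q\in I(y^-,y^+)$ and select future-directed null covectors $\zeta_1,\ldots,\zeta_{m+1}\in T_q^* M$ in sufficiently general position so that (i) their sum $\zeta_0$ is future-directed null, (ii) each backward null geodesic $\gamma_{q,-\zeta_j}$ hits $\partial M$ transversally at a point in $J^+(y^-)$ well inside the time slab $(0,T)\times\partial\Omega$, and (iii) the forward null geodesic from $(q,\zeta_0)$ reaches $\partial M$ transversally inside $J^-(y^+)$. The timelike, null-convex character of $\partial M$ together with the definition of $\mathbb{W}$ supplies such a configuration. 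Construct $u_j$ as Lagrangian distributions (distorted plane waves, or Gaussian beams built in the extension of $M$) with wave front set concentrated in a small conic neighborhood of the bicharacteristic through $(q,\zeta_j)$ and with principal symbol nonvanishing at $q$. Then $\partial_t^2(u_1\cdots u_{m+1})$ is a paired Lagrangian distribution at $q$ whose principal symbol on the fiber $\mathbb{R}\zeta_0$ is proportional to $\prod_j a_j(q)$, and the causal parametrix of $\sq$, available because $(M,g)$ is globally hyperbolic, propagates this singularity as a Lagrangian distribution along the null geodesic in direction $\zeta_0$ out to $\partial M$.

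Reading off the principal symbol of $w$ on this outgoing Lagrangian at the boundary yields a number proportional to $(\beta^{(1)}_{m+1}(q)-\beta^{(2)}_{m+1}(q))\,a_1(q)\cdots a_{m+1}(q)$. Since $w$ has vanishing Dirichlet and Neumann traces on $\partial M$, a boundary microlocal argument (propagation of polarization across the null-convex timelike boundary, or equivalently an application of Hörmander's propagation theorem in the doubled manifold) forces this symbol to vanish, and choosing the $a_j(q)\neq 0$ gives $\beta^{(1)}_{m+1}(q) = \beta^{(2)}_{m+1}(q)$. Letting $q$ range over $\mathbb{W}$ completes the induction.

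The hardest part will be the microlocal bookkeeping ensuring that the new singularity along the bicharacteristic of $\zeta_0$ really detects only $\beta_{m+1}^{(1)}-\beta_{m+1}^{(2)}$ and is not polluted by the many sub-interaction singularities carried by $R^{(k)}$, or by reflected and glancing rays off $\partial M$. This requires a careful use of the inductive hypothesis to cancel $R^{(1)}-R^{(2)}$ and a generic-position argument for $\zeta_1,\ldots,\zeta_{m+1}$ that excludes accidental coincidences of wave front sets coming from sub-collections. A parallel geometric difficulty, where the null-convexity of $\partial M$ and the structure of $\mathbb{W}$ are essential, is producing the configuration of null covectors at every $q\in\mathbb{W}$ such that all of the relevant null geodesics enter and exit the boundary inside the prescribed time window.
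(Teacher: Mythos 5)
Your overall strategy (higher-order linearization, nonlinear interaction of conormal waves at a point $q\in\mathbb{W}$, reading off the principal symbol of the new singularity at the boundary) is the right family of ideas, but the induction as you have set it up breaks at the bottom. The base case $m=1$ does not work: the source $\partial_t^2(u_1u_2)$ is conormal to $\Lambda_{12}=N^*(K_1\cap K_2)$, and the lightlike part of $\Lambda_{12}$ is exactly $\Lambda_1\cup\Lambda_2$ (since $|a\zeta^{(1)}+b\zeta^{(2)}|^2_{g^*}=2ab\langle\zeta^{(1)},\zeta^{(2)}\rangle_{g^*}$ vanishes only for $ab=0$), so the causal parametrix produces no new propagating singularity and the second-order linearization does not detect $\beta_2$ at interior points. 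Worse, the third- and fourth-order linearizations do not isolate $\beta_3$ and $\beta_4$ either: the quadratic term feeds back through the parametrix (terms like $\beta_2\partial_t^2(v_iA_2^{jk})$, $\beta_2\partial_t^2(A_2^{ij}A_2^{kl})$, $\beta_3\partial_t^2(v_iv_jA_2^{kl})$), and these cascade contributions produce singularities on the \emph{same} outgoing Lagrangian with the \emph{same} order as the direct term. The principal symbol at order four is therefore a polynomial combination $-C\,\beta_2^3+D\,\beta_2\beta_3-\beta_4$ with direction-dependent coefficients $C,D$, not a multiple of $\beta_4$ alone, and your inductive hypothesis cannot cancel these terms because $\beta_2,\beta_3$ are not yet known at that stage. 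The paper resolves this entanglement by choosing three one-parameter families of covector configurations (parametrized by a small angle $\theta$), expanding $C(\theta),D(\theta)$ in $s=\sin(\theta/2)$, and verifying that the resulting $3\times3$ linear system in $(\beta_2^{(1)})^3-(\beta_2^{(2)})^3$, $\beta_2^{(1)}\beta_3^{(1)}-\beta_2^{(2)}\beta_3^{(2)}$, $\beta_4^{(1)}-\beta_4^{(2)}$ is nondegenerate (with a separate appeal to the third-order data when $\beta_2(q)=0$). This explicit linear-algebra step is the heart of the lower-order recovery and is absent from your argument.

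The inductive step also has a structural problem for $m+1\geq 5$: you propose $m+1$ distinct waves in general position at $q$, but $M$ is four-dimensional, so at most four characteristic hypersurfaces can intersect $4$-transversally, and the product of five or more conormal distributions through a common point is not controlled by the paired-Lagrangian calculus you are invoking. The paper circumvents this by keeping only four distinct waves and differentiating $N-3$ times in $\epsilon_1$, which requires $v_1$ to lie in the Piriou class $\Ir^\mu(\Lambda_1)$ so that $v_1^{N-3}$ is again conormal to $\Lambda_1$ with computable symbol (the $m$-fold fiberwise convolution); the source is then still a product of four conormal distributions. Finally, your assertion that $R^{(1)}=R^{(2)}$ as distributions overreaches: the inductive hypothesis gives agreement of the lower coefficients only on $\mathbb{W}$, whereas the cascade terms sample them elsewhere; the paper avoids this by working purely with principal symbols at the covector $(y,\eta)$ on the bicharacteristic through $(q,\zeta)$, for which all coefficient evaluations localize to the single interaction point $q$.
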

}
{
When $c(t,x')$ is unknown, the problem of recovering $c(t,x')$ from the DN map for the linear problem is still open in general.
This theorem shows the unique recovery of the nonlinear term from the DN map under our assumptions, when the sound speed $c(t,x')$ is known.
We remark that the same result holds if we consider the metric $g = -\diff t^2 + \kappa(t,x')$ and replace $\square_c$ by 
the operator $\square = \square_g + A(x, D_x)$, where $\kappa(t,x')$ is family of Riemannian metrics on $\Omega$ smoothly depending on $t$ and 
$A$ is a first order linear differential operator.}
The inverse problems of recovering the metric and the nonlinear term for a semilinear wave equation  were originated in \cite{Kurylev2018} in a globally hyperbolic Lorentzian manifold without boundary. 
Starting with \cite{Kurylev2018, Kurylev2014a},
there are many works studying inverse problems for nonlinear hyperbolic equations, see
\cite{Barreto2021,Barreto2020, Chen2019,Chen2020,Hoop2019,Hoop2019a,Uhlig2020,Feizmohammadi2019,Hintz2020, Kurylev2014,Balehowsky2020,Lai2021,Lassas2017,Tzou2021,Uhlmann2020,Hintz2021,ultra21,Uhlmann2019}.
For an overview of the recent progress, see \cite{lassas2018inverse,Uhlmann2021}.

The plan of this paper is as follows.
In Section \ref{Sec_well}, we establish the well-posedness of the boundary value problems (\ref{eq_problem}) for small boundary data.
In Section \ref{sec_prelim}, we present some preliminaries for Lorentzian geometry as well as microlocal analysis, construct the distorted planes waves, and derive the asymptotic expansion.
We prove Proposition \ref{pp_thm1}, which implies that Theorem \ref{thm1} is a special case of Theorem \ref{thm2}. Therefore, in the rest of the paper the aim is to prove Theorem \ref{thm2} using nonlinear interaction of distorted plane waves. 
In Section \ref{Sec_threewaves} and \ref{Sec_fourwaves}, we analyze the singularities produced by the nonlinear interaction of three and four distorted plane waves.
Based on these results, 
we recover the lower order nonlinear coefficients from the fourth order linearization of the DN map in Section \ref{sec_lower}.
The recovery is based on a special construction of lightlike covectors at each $q\in \mathbb{W}$.
In Section \ref{sec_Piriou} and
Section \ref{sec_N}, we recover other nonlinear coefficients from the higher order linearization of the DN map, using Piriou conormal distributions.

\section{Local well-posedness}\label{Sec_well}
The well-posedness of the boundary value problem (\ref{eq_problem}) with small boundary data $f$ can be established by the same arguments as in \cite{Hintz2020}, see also
\cite{ultra21} and \cite{Uhlmann2021a}.
For completeness we present the proof below.

Let $T >0$ be fixed and let $m \geq 5$.
Suppose $f \in C^{m+1} ([0,T] \times \partial \Omega)$ satisfies $\|f\|_{C^{m+1} ([0,T] \times \partial \Omega)} \leq \epsilon_0$, with small positive number $\epsilon_0$ to be specified later.
Then there exists a function $\tf \in C^{m+1} ([0,T] \times \Omega)$ such that $ \tf|_{\partial M} = f$ and
\[
\|\tf\|_{C^{m+1} ([0,T] \times \Omega)} \leq\|f\|_{C^{m+1} ([0,T] \times \partial \Omega)} .
\]
Let $\tp = p -\tf $.
We rewrite the nonlinear term as
\begin{align}\label{eq_F}
F(x, p, \partial_t p, \partial^2_t p)
& = \sum_{j = 1}^{+\infty} \beta_{j+1}(x) \partial_t^2(p^{j+1}) \\
&  = (\sum_{j = 1}^{+
    \infty} (j+1)\beta_{j+1}(x)  p^{j}) p_{tt} 
+ (\sum_{j = 1}^\infty (j+1)j  \beta_{j+1}(x) p^{j-1}) p_{t}p_t \nonumber\\
& \equiv q_1(x,p)pp_{tt} + q_2(x,p) p_t^2. \nonumber
\end{align}
By the assumption (\ref{assum_F}), the functions $q_1, q_2$ 
are smooth functions over $M \times \mathbb{R}$. 
Then $\tp$ must solve the system
\begin{equation}\label{hmNBC}
\begin{cases}
\sq \tp = - \sq \tf  + F(x, \tp + \tf,  \partial_t(\tp + \tf), \partial^2_t(\tp + \tf)) , &\mbox{on } M,\\
\tp = 0, &\mbox{on } \partial M,\\
\tp = 0, &\mbox{for } t <0.
\end{cases}
\end{equation}
For $R>0$, we define $Z^m(R,T)$ as the set containing all functions $v$ such that
\[
v \in \bigcap_{k=0}^{m} W^{k, \infty}([0,T]; H^{m-k}(\Omega)),
\quad \|v\|^2_{Z^m} = \sup_{t \in [0, T]} \sum_{k=0}^m \|\partial_t^k v(t)\|^2_{H^{m-k}(\Omega)} \leq R^2.
\]
We abuse the notation $C$ to denote different constants that depends on $m, M, T$.
One can show the following claim by Sobolev Embedding Theorem, see also \cite[Claim 3]{Uhlmann2021a} and its proof.
\begin{claim}\label{normineq}
Suppose $u \in Z^m(R, T)$.
Then $\|u\|_\zmm \leq \|u\|_\zm$ and $\nabla^j_g u \in Z^{m-1}(R, T)$,
$j = 1, \dots, 4$. Moreover, we have the following estimates.
\begin{enumerate}[(1)]
    \item If $v \in Z^m(R', T)$, then $\|uv\|_\zm \leq C \|u\|_\zm \|v\|_\zm$.
    \item If $v \in \zmm(R', T)$, then $\|uv\|_\zmm \leq C \|u\|_\zm \|v\|_\zmm$.
    \item If $q(x,u, w) \in C^m(M \times \mathbb{C} \times \mathbb{C}^n)$,
    then $\|q(x, u, \diff u)\|_\zm \leq C \|q\|_{C^m(M \times \mathbb{C} \times \mathbb{C}^n)} (\sum_{l=0}^{m} \|u\|^l_\zm)$.
\end{enumerate}
\end{claim}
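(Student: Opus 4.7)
The plan is to reduce everything to standard Sobolev product and composition estimates on $\Omega$, applied slice-by-slice in $t$ and then combined with the supremum in $t$ built into the $Z^m$ norm. The monotonicity $\|u\|_{Z^{m-1}} \leq \|u\|_{Z^m}$ is immediate from the continuous inclusion $H^{m-k}(\Omega) \hookrightarrow H^{m-1-k}(\Omega)$ applied summand by summand for $k = 0, \ldots, m-1$. Similarly, $\nabla_g^j u \in Z^{m-1}(R,T)$ for $j = 1, \ldots, 4$ follows from the Leibniz rule applied to $\nabla_g^j$ against smooth coefficients of the metric and its inverse: each covariant derivative drops the relevant $H^s$ index by one, and the smooth metric factors contribute bounded multiplicative constants depending only on $M$ and $T$.

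For parts (1) and (2), I would expand $\partial_t^k(uv)$ by the Leibniz rule as $\sum_{j=0}^k \binom{k}{j}(\partial_t^j u)(\partial_t^{k-j} v)$ and estimate each summand in $H^{m-k}(\Omega)$ via the Moser product estimate
\[
\|fg\|_{H^s(\Omega)} \leq C\bigl(\|f\|_{L^\infty(\Omega)}\|g\|_{H^s(\Omega)} + \|f\|_{H^s(\Omega)}\|g\|_{L^\infty(\Omega)}\bigr).
\]
Because $\dim \Omega = 3$ and $m \geq 5$, Sobolev embedding gives $H^2(\Omega) \hookrightarrow L^\infty(\Omega)$, and in every summand at least one of the two factors carries an $H^s$-norm with $s \geq 2$ and can be placed uniformly in $L^\infty$, while the other factor absorbs the full $H^{m-k}$ load. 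Summing over $j$ and $k$ and taking the supremum in $t$ yields (1). For (2) the same scheme works if $v$ is only known in $Z^{m-1}$: the loss of one derivative is placed on whichever factor is already being controlled in $L^\infty$, and the margin $s \geq 2$ is enough to preserve the embedding.

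For (3), I would differentiate $q(x, u, du)$ via the Fa\`a di Bruno formula. Any derivative of order at most $m$ applied to $q(x, u, du)$ is a sum of monomials in the derivatives $\partial^\alpha u$ with $|\alpha| \leq m$, of total degree at most $m$, whose coefficients are partial derivatives of $q$ of order at most $m$ evaluated at $(x, u, du)$. The uniform $C^m$ bound on $q$ controls these coefficients by $\|q\|_{C^m(M \times \mathbb{C} \times \mathbb{C}^n)}$, and each monomial of degree $l$ is estimated by iterating the product inequality from (1), producing a factor $\|u\|_{Z^m}^l$. Summing over $0 \leq l \leq m$ gives the claimed bound.

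The main obstacle I anticipate is the nonisotropic bookkeeping forced by the definition of $\|\cdot\|_{Z^m}$, which trades one time derivative for one spatial derivative and therefore puts each summand in a different $H^{m-k}$. The Leibniz expansion has to be paired with a careful case split on the indices $(j, k-j)$ to decide which factor lands in $L^\infty$ and which in $H^{m-k}$. Since $m \geq 5$ and $\dim \Omega = 3$, however, there is always a comfortable margin for Sobolev embedding, so no borderline regularity case arises; once the combinatorics is organized, no ingredient beyond Moser-type estimates, Sobolev embedding, and the Fa\`a di Bruno formula is needed.
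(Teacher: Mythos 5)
Your proposal is correct and matches the approach the paper itself indicates: the paper gives no proof of this claim, only citing the Sobolev embedding theorem and the proof of Claim 3 in \cite{Uhlmann2021a}, and the intended argument is exactly your combination of the Leibniz rule in $t$ and $x$, Moser-type product estimates with $H^2(\Omega)\hookrightarrow L^\infty(\Omega)$ (comfortably available since $\dim\Omega=3$ and $m\geq 5$), and a Fa\`a di Bruno expansion for the composition in (3). The only caution is cosmetic rather than substantive: in the extreme Leibniz terms (e.g.\ $\partial_t^m u\cdot v$ estimated in $L^2$) the symmetric Moser inequality cannot be applied verbatim because $\partial_t^m u\notin L^\infty(\Omega)$, so one must use the asymmetric form in which the factor whose Sobolev index exceeds $3/2$ acts as the multiplier, which is precisely the case split on $(j,k-j)$ that you describe.
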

For $v \in Z^m(\rho_0, T)$ with $\rho_0$ to be specified later, we consider the linearized problem
\begin{equation*}
\begin{cases}
\sq \tp = - \sq \tf  - F(x, v + \tf, \partial_t (v + \tf), \partial^2_t (v + \tf)) , &\mbox{on } M,\\
\tp = 0, &\mbox{on } \partial M,\\
\tp = 0, &\mbox{for } t <0,
\end{cases}
\end{equation*}
and we define the solution operator $\mathcal{J}$ which maps $v$ to the solution $\tilde{u}$.
By Claim \ref{normineq} and (\ref{eq_F}),
we have
\begin{align*}
&\| - \sq \tf  - F(x, v + \tf, \partial_t(v + \tf), \partial^2_t(v + \tf)) \|_{Z^{m-2}} \\
\leq &  \| - \sq \tf \|_{C^{m-1}([0,T]\times \Omega)} +  C\sum_{k=1}^2\|q_k(x,v + \tf)\|_\zm \|v + \tf\|_\zm \|v+\tf \|_\zm \\
\leq & C( \ep_0 + (1 + (\rho_0 + \ep_0) + \ldots +  (\rho_0 + \ep_0)^m) (\rho_0 + \ep_0)^2 ).
\end{align*}
According to \cite[Theorem 3.1]{Dafermos1985},
the linearized problem has a unique solution
\[
\tp \in \bigcap_{k=0}^{m} C^{k}([0,T]; H^{m-k}(\Omega))
\]
such that
\[
 \| \tp\|_{Z^m} \leq C(\ep_0 + (1 + (\rho_0 + \ep_0) + \ldots +  (\rho_0 + \ep_0)^m) (\rho_0 + \ep_0)^2 )e^{KT},
\]
where $C, K$ are positive constants.
If we assume $\rho_0$ and $\epsilon_0$ are small enough, then the above inequality implies that
\[
\| \tp\|_{Z^m} \leq C(\ep_0 + (\rho_0 + \ep_0)^2 )e^{KT}.
\]
For any $\rho_0$ satisfying $\rho_0 < 1/({2C e^{KT}})$,
we can choose $\ep_0 = {\rho_0}/({8C e^{KT}}) $  such that
\begin{equation}\label{eps}
C(\ep_0 + (\rho_0 + \ep_0)^2 )e^{KT} < \rho_0.
\end{equation}
In this case, we have  $\mathcal{J}$ maps $Z^m(\rho_0, T)$ to itself.

In the following we show that $\mathcal{J}$ is a contraction map if $\rho_0$ is small enough.
It follows that the boundary value problem (\ref{hmNBC}) has a unique solution $\tilde{u} \in Z^m(\rho_0, T)$ as a fixed point of $\mathcal{J}$.
Indeed, for $\tp_j = \mathcal{J}(v_j)$ with $v_j \in Z^m(\rho_0, T)$, we have that $\tp_2 - \tp_1$ satisfies
\begin{align*}
&\sq (\tp_2 - \tp_1) \\
=& F(x,v_2 + \tf, \partial_t(v_2 + \tf), \partial^2_t(v_2 + \tf))  -F(x,v_1 + \tf, \partial_t(v_1 + \tf), \partial^2_t(v_1 + \tf))\\
= & (q_1(x,v_2 +\tf)(v_2 +\tf) -q_1(x,v_1 +\tf)(v_1 + \tf)\partial_t^2(v_2 + \tf) \\
&+ q_1(x,v_1 +\tf)(v_1 +\tf)\partial^2_{t}(v_2-v_1)\\
&+ (q_2(x,v_2 +\tf) -q_2(x,v_1 +\tf))\partial_t(v_2 +v_1+ 2\tf)\partial_t(v_2 - v_1).
\end{align*}
We denote the right-hand side by $\mathcal{I}$ and using Claim \ref{normineq} for each term above,
we have 
\begin{align*}
\|\mathcal{I}\|_{Z^{m-2}}
&\leq C'  \|v_2 - v_1\|_\zm (\rho_0 + \ep_0),
\end{align*}
where $\rho_0, \ep_0$ are chosen to be small enough.
By \cite[Theorem 3.1]{Dafermos1985} and (\ref{eps}), one obtains
\begin{align*}
&\| \tp_2 - \tp_1 \|_\zm 
&\leq CC' \|v_2 - v_1\|_\zm (\rho_0 + \ep_0) e^{KT} < CC'{{ e^{KT} }} (1 + 1/(8Ce^{KT}))\rho_0 \|v_2 - v_1\|_\zm.
\end{align*}
Thus, if we choose $\rho \leq \frac{1}{CC'e^{KT}(1 + 1/(8Ce^{KT}))} $, then
\[
\| \mathcal{J}(v_2 -v_1)\|_\zm < \|v_2 - v_1\|_\zm
\]
shows that $\mathcal{J}$ is a contraction.
This proves that there exists a unique solution $\tilde{u}$ to the problem (\ref{hmNBC}).
Furthermore, by \cite[Theorem 3.1]{Dafermos1985} this solution
satisfies the estimates $\|\tp\|_\zm \leq 8C e^{KT} \ep_0.$
Therefore, we prove the following proposition.
\begin{pp}
Let $f \in C^{m+1} ([0,T] \times \partial \Omega)$ with $m \geq 5$.
Suppose $f = \partial_t f = 0$ at $t=0$.
Then there exists small positive $\ep_0$ such that for any
$\|f\|_{C^{m+1} ([0,T] \times \partial \Omega)} \leq \epsilon_0$, we can find a unique solution
\[
p \in \bigcap_{k=0}^m C^k([0, T]; H_{m-k}(\Omega))
\]
to the boundary value problem (\ref{eq_problem}), which satisfies the estimate
\[
\|{p}\|_\zm \leq C \|f\|_{C^{m+1}([0,T]\times \partial \Omega)}
\]
for some $C>0$ independent of $f$.
\end{pp}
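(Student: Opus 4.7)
The plan is to run a Picard iteration in the norm $\|\cdot\|_{Z^m}$ after homogenizing the Dirichlet data. First I would lift $f$ to a function $\tf \in C^{m+1}([0,T]\times \Omega)$ with $\tf|_{\partial M}=f$ and $\|\tf\|_{C^{m+1}} \le \|f\|_{C^{m+1}}$, and set $\tp = p - \tf$. Using the analyticity assumption (\ref{assum_F}), the nonlinearity can be re-expressed as
\[
F(x,p,\partial_t p,\partial_t^2 p) = q_1(x,p)\,p\,\partial_t^2 p + q_2(x,p)(\partial_t p)^2,
\]
with $q_1,q_2$ smooth on $M\times \mathbb{R}$. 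Then $\tp$ solves a wave equation with zero Dirichlet data, vanishing initial data at $t=0$, and forcing $-\sq\tf - F(\cdot,\tp+\tf,\partial_t(\tp+\tf),\partial_t^2(\tp+\tf))$.

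Next I would define the solution operator $\mathcal{J}:v\mapsto\tp$ on a closed ball $Z^m(\rho_0,T)$, where $\tp$ is the unique $Z^m$-solution of the \emph{linear} Dirichlet problem $\sq\tp = -\sq\tf - F(x,v+\tf,\partial_t(v+\tf),\partial_t^2(v+\tf))$ with zero Dirichlet and initial data. Existence, uniqueness, and the linear energy estimate $\|\tp\|_{Z^m}\le Ce^{KT}\|\text{RHS}\|_{Z^{m-2}}$ are given by standard theory for hyperbolic Dirichlet problems, for instance \cite[Theorem~3.1]{Dafermos1985}. Combining this with the product and composition estimates of Claim \ref{normineq}, and in particular part (3) applied to $q_1,q_2$, gives a bound of the shape
\[
\|\tp\|_{Z^m} \le Ce^{KT}\bigl(\epsilon_0 + \Phi_m(\rho_0+\epsilon_0)\,(\rho_0+\epsilon_0)^2\bigr),
\]
where $\Phi_m$ is a polynomial of degree $m$ accounting for the powers of $v+\tf$ in the series expansion.

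The remaining step is the selection of $\rho_0,\epsilon_0$. To obtain $\mathcal{J}:Z^m(\rho_0,T)\to Z^m(\rho_0,T)$, I would first fix $\rho_0 < 1/(2Ce^{KT})$ and then choose $\epsilon_0$ proportionally smaller (say $\epsilon_0 = \rho_0/(8Ce^{KT})$) so that the displayed bound is strictly less than $\rho_0$. For the contraction property I would subtract the equations satisfied by $\tp_j=\mathcal{J}(v_j)$, telescope the nonlinear forcing into differences of the form $(q_k(\cdot,v_2+\tf)-q_k(\cdot,v_1+\tf))\cdot(\cdots) + q_k(\cdot,v_1+\tf)\,\partial_t^j(v_2-v_1)\cdot(\cdots)$, and apply Claim \ref{normineq} once more together with the mean value theorem for $q_k$ to get $\|\mathcal{J}(v_2)-\mathcal{J}(v_1)\|_{Z^m}\le C'(\rho_0+\epsilon_0)e^{KT}\|v_2-v_1\|_{Z^m}$. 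Shrinking $\rho_0$ makes the prefactor $<1$, and the Banach fixed point theorem produces a unique $\tp\in Z^m(\rho_0,T)$ solving the homogenized problem. Setting $p=\tp+\tf$ yields the unique solution of (\ref{eq_problem}) with the stated regularity, and the linear estimate $\|p\|_{Z^m}\le C\|f\|_{C^{m+1}}$ follows since $\|\tp\|_{Z^m}\lesssim \epsilon_0$ and $\|\tf\|_{Z^m}\lesssim \|f\|_{C^{m+1}}$.

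The main obstacle, as I see it, is Claim \ref{normineq}(3): bounding $\|q_k(x,\tp+\tf)\|_{Z^m}$ by a polynomial in $\|\tp+\tf\|_{Z^m}$ requires a Moser-type chain rule in a mixed time/space Sobolev setting, and it must be compatible with the infinite series defining $q_1,q_2$. This is precisely where assumption (\ref{assum_F}) that $\sum \beta_{m+1}(x)p^{m+1}$ is analytic in $p$ is essential, so that $q_1,q_2$ are genuine smooth functions on $M\times\mathbb{R}$ and their Sobolev norms in $\tp+\tf$ grow only polynomially; once this step is secured, the rest is a routine fixed point argument.
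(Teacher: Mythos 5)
Your proposal is correct and follows essentially the same route as the paper: homogenize the boundary data via an extension $\tf$, rewrite $F$ as $q_1(x,p)p\,\partial_t^2 p + q_2(x,p)(\partial_t p)^2$ using the analyticity assumption, and run a Banach fixed point argument for the map $\mathcal{J}$ on the ball $Z^m(\rho_0,T)$, invoking the linear energy estimate of \cite[Theorem 3.1]{Dafermos1985} together with the product and Moser-type composition bounds of Claim \ref{normineq}, with the same choice $\rho_0 < 1/(2Ce^{KT})$, $\ep_0 = \rho_0/(8Ce^{KT})$. You also correctly identify Claim \ref{normineq}(3) as the step where assumption (\ref{assum_F}) is genuinely used, which is exactly how the paper organizes the argument.
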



\section{Preliminaries}\label{sec_prelim}

\subsection{Lorentzian manifolds}
Recall $(M,g)$ is globally hyperbolic with timelike and null-convex boundary, where $M = \mathbb{R} \times \Omega$.
As in \cite{Hintz2020}, we extend $(M,g)$ smoothly to a slightly larger globally hyperbolic Lorentzian manifold $(\tM, g_e)$ without boundary, 
where $\tM  = \mathbb{R} \times \tO$ 
such that $\Omega$ is contained in the interior of the open set $\tO$.
Let 
\[V  = (0,T) \times \tO \setminus \Omega\] 
be the observation set. 
See also \cite[Section 7]{Uhlmann2021a} for more details about the extension. 
In the following, we abuse the notation and do not distinguish $g$ with $g_e$ if there is no confusion caused.


We recall some notations and preliminaries in \cite{Kurylev2018}.
For $\eta \in T_p^*\tM$, the corresponding vector of $\eta$  is denoted by $ \eta^\# \in T_p \tM$.
The corresponding covector of a vector $\xi \in T_p \tM$ is denoted by $ \xi^b \in T^*_p \tM$.
We denote by 
\[
L_p \tM = \{\zeta \in T_p \tM \setminus 0: \  g(\zeta, \zeta) = 0\}
\]
the set of light-like vectors at $p \in \tM$ and similarly by $L^*_p \tM$ the set of light-like covectors.
The sets of future (or past) light-like vectors are denoted by $L^+_p \tM$ (or $L^-_p \tM$), and those of future (or past) light-like covectors are denoted by $L^{*,+}_p \tM$ (or $L^{*,-}_p \tM$).

The characteristic set $\Char(\sq)$ is the set $b^{-1}(0) \subset T^*\tM$, where 
$
b(x, \zeta) =g^{ij}\zeta_i \zeta_j
$
is the principal symbol. 
It is also the set of light-like covectors with respect to $g$.  
We denote by $\Theta_{x, \zeta}$ the null bicharacteristic of $\sq$ that contains $(x, \zeta) \in L^*\tM$,
which is defined as the integral curve of the Hamiltonian vector field $H_b$. 
Then a covector $(y, \eta) \in \Theta_{x, \zeta}$ if and only if there is a light-like geodesic $\gamma_{x, \zeta^\#}$ such that 
\[
(y, \eta) = (\gamma_{x, \zeta^\#}(s), (\dot{\gamma}_{x, \zeta^\#}(s))^b), \ \text{ for } s \in \mathbb{R}.
\]
Here we denote by $\gamma_{x, \zeta^\#}$ the unique null geodesic starting from $x$ in the direction $\zeta^\#$. 

The time separation function $\tau(x,y) \in [0, \infty)$ between two points $x < y$ in  $\tM$
is the supremum of the lengths \[
L(\alpha) =  \int_0^1 \sqrt{-g(\dot{\alpha}(s), \dot{\alpha}(s))} ds
\] of 
the piecewise smooth causal paths $\alpha: [0,1] \rightarrow \tM$ from $x$ to $y$. 
If $x<y$ is not true, we define $\tau(x,y) = 0$. 
Note that $\tau(x,y)$ satisfies the reverse triangle inequality
\[
\tau(x,y) +\tau(y,z) \leq \tau(x,z), \text{ where } x \leq y \leq z.
\]
For $(x,v) \in L^+\tM$, recall the cut locus function
\[
\rho(x,v) = \sup \{ s\in [0, \mathcal{T}(x,v)]:\ \tau(x, \gamma_{x,v}(s)) = 0 \},
\]
where $\mathcal{T}(x,v)$ is the maximal time such that $\gamma_{x,v}(s)$ is defined.
The cut locus function for past lightlike vector $(x,w) \in L^-\tM$ is defined dually with opposite time orientation, i.e.,
\[
\rho(x,w) = \inf \{ s\in [\mathcal{T}(x,w),0]:\ \tau( \gamma_{x,w}(s), x) = 0 \}.
\]
For convenience, we abuse the notation $\rho(x, \zeta)$ to denote $\rho(x, \zeta^\#)$ if $\zeta \in L^{*,\pm}\tM$.
By \cite[Theorem 9.15]{Beem2017}, the first cut point $\gamma_{x,v}(\rho(x,v))$ is either the first conjugate point or the first point on $\gamma_{x,v}$ where there is another different geodesic segment connecting $x$ and  $\gamma_{x,v}(\rho(x,v))$. 

In particular, 	when $g = -\diff t^2 + g_0$, we have the following proposition, which implies Theorem \ref{thm1} is the result of Theorem \ref{thm2}. 
\begin{pp}\label{pp_thm1}
Let $(\Omega, g_0)$ satisfy the assumption (\ref{assum_Omega}) and $g = -\diff t^2 + g_0$, see (\ref{eq_g0}) for the definition of $g_0$. 
For any $x'_0 \in \Omega$, one can find a point $q\in \mathbb{W}$ with $q = (t_q, x'_0)$ for some $t_q \in (0, T)$. 
\end{pp}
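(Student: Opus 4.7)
The plan is to exploit the product structure $g = -\diff t^2 + g_0$ together with the nontrapping hypothesis on $(\Omega, g_0)$ to find, for a given $x'_0 \in \Omega$, two boundary points $x'_\pm \in \partial\Omega$ lying on a single $g_0$-geodesic through $x'_0$, and then to lift everything to $M$ by choosing time coordinates so that strict timelike relations hold.

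First, fix any unit vector $v \in T_{x'_0}\Omega$ (with respect to $g_0$) and let $\gamma_\pm$ denote the $g_0$-geodesics starting at $x'_0$ in the directions $\pm v$. By Assumption \ref{assum_Omega}, both $\gamma_+$ and $\gamma_-$ reach $\partial\Omega$ in finite time; denote the first exit times by $L_\pm > 0$, set $x'_\pm = \gamma_\pm(L_\pm) \in \partial\Omega$, and note that $L_+ + L_-$ is the length of a single $g_0$-geodesic segment through $x'_0$ with endpoints on $\partial\Omega$, so
\[
L_+ + L_- \leq \mathrm{diam}_{g_0}(\Omega) < T.
\]
In particular, $d_{g_0}(x'_-, x'_0) \leq L_-$ and $d_{g_0}(x'_0, x'_+) \leq L_+$, hence $d_{g_0}(x'_-, x'_0) + d_{g_0}(x'_0, x'_+) < T$.

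Next, choose $\delta > 0$ small enough that $L_+ + L_- + 3\delta < T$, and set
\[
t_- = \delta, \qquad t_q = L_- + 2\delta, \qquad t_+ = L_+ + L_- + 3\delta,
\]
so $t_-, t_q, t_+ \in (0,T)$ and $t_q - t_- > L_-$, $t_+ - t_q > L_+$. Put $y^\pm = (t_\pm, x'_\pm) \in (0,T) \times \partial\Omega$ and $q = (t_q, x'_0)$. To check that $q \in I(y^-, y^+)$, parametrize the unit-speed $g_0$-geodesic from $x'_-$ to $x'_0$ by $\sigma: [0, L_-] \to \Omega$ and define $\mu:[t_-, t_q] \to M$ by
\[
\mu(\tau) = \bigl(\tau,\; \sigma\bigl(L_-(\tau - t_-)/(t_q - t_-)\bigr)\bigr).
\]
Then $g(\dot\mu, \dot\mu) = -1 + \bigl(L_-/(t_q - t_-)\bigr)^2 < 0$, so $\mu$ is a future-pointing timelike curve from $y^-$ to $q$; the analogous construction on $[t_q, t_+]$ using a unit-speed $g_0$-geodesic from $x'_0$ to $x'_+$ gives a future-pointing timelike curve from $q$ to $y^+$. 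Since $x'_0 \in \Omega$ and $t_q \in (0,T)$, we have $q \in \intM$, and hence $q \in I(y^-, y^+) \cap \intM \subset \mathbb{W}$, as required.

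There is no substantive obstacle here: the argument is essentially a reduction of the Lorentzian chronology to Riemannian distance on $\Omega$, made possible by the warped-product form of $g$. The only point requiring care is ensuring strict (rather than merely causal) timelike relations, which is precisely why we introduce the slack parameter $\delta$ and use the strict bound $\mathrm{diam}_{g_0}(\Omega) < T$ coming from Assumption \ref{assum_Omega}.
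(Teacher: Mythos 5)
Your proof is correct, and it follows the same underlying strategy as the paper's: pick a spatial $g_0$-geodesic through $x'_0$, use the nontrapping bound $\mathrm{diam}_{g_0}(\Omega) < T$ to control the exit lengths, and lift to a causal curve in $M = \mathbb{R}\times\Omega$ by pairing arclength with time. The differences are worth noting, though, because they cut in your favor. The paper lifts the spatial geodesic to two \emph{null} geodesics $\gamma_1, \gamma_2$ (going out from a single boundary point $p'$ to $q$ and back to $p'$ at a later time) and concludes $q \in J(\gamma_1(0), \gamma_2(s_0-s_1))$; but $\mathbb{W}$ is defined via the chronological diamond $I(y^-,y^+)$, not the causal one, so strictly speaking the paper still owes a small perturbation (e.g.\ shifting $y^-$ earlier and $y^+$ later in time) to upgrade $\leq$ to $\ll$. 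Your construction builds in the slack $\delta$ from the start, so the connecting curves satisfy $g(\dot\mu,\dot\mu) = -1 + \bigl(L_\pm/(L_\pm+\delta)\bigr)^2 < 0$ and you land directly in $I(y^-,y^+)$. You also use the two distinct exit points $x'_\pm$ and the full bound $L_+ + L_- < T$, whereas the paper reuses one boundary point and invokes the (also valid, by choosing the shorter half) reduction to $s_0 - s_1 < T/2$. Both routes work; yours matches the definition of $\mathbb{W}$ more literally. The only cosmetic slips are that $\sigma(0) = x'_- \in \partial\Omega$, so $\sigma$ maps into $\overline{\Omega}$ rather than $\Omega$, and the sentence about $d_{g_0}$ is unused — neither affects the argument.
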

\begin{proof}
For each $x'_0 in \Omega$, 
we consider a unit-speed geodesic $\lambda(s)$ in $\Omega$ such that $x'_0 = \lambda(s_0)$ and there exists
\[
s_1  = \sup \{s < s_0: \lambda(s) \in \partial \Omega\}.
\]  
We write $p' = \lambda(s_1)$. 
With (\ref{assum_Omega}), we can assume $s_0 -s_1 < {T}/{2}$. 
There exists $\ep >0$ such that $2(s_0-s_1) + \ep < T$. 
First we consider 
\[
\gamma_1(s) = (s + \ep, \lambda(s+s_1)).
\]
It follows that $\gamma_1(s)$ is a null geodesic in $M$ with $\gamma_1(0) = (\ep, p') \in (0,T) \times \partial \Omega$. 
We set $q = \gamma_1(s_0- s_1) = (s_0-s_1 + \ep, x'_0)$. 
Note that $t_q = s_0 - s_1 + \ep \in (0, T)$. 
Next, we consider 
\[
\gamma_2(s) = (s + s_0-s_1 + \ep, \lambda(s_0 - s)).
\]
Similarly, $\gamma_2(s)$ is a null geodesic in $M$ with $\gamma_2(0) = (s_0 - s_1 + \ep, x'_0) = q$ and $\gamma_2(s_0 - s_1) = (2(s_0 -s_1) + \ep, p') \in (0, T) \times \partial \Omega$.
Therefore, we have $q \in J(\gamma_1(0), \gamma_2(s_0-s_1))$, which implies $q\in \mathbb{W}$. 
\end{proof}

\subsection{Distributions}
Suppose $\Lambda$ is a conic Lagrangian submanifold in $T^*\tM$ away from the zero section. 
We denote by $\Ical^\mu(\Lambda)$ the set of Lagrangian distributions in $\tM$ associated with $\Lambda$ of order $\mu$. 
In local coordinates, a Lagrangian distribution can be written as an oscillatory integral and we regard its principal symbol, 
which is invariantly defined on $\Lambda$ with values in the half density bundle tensored with the Maslov bundle, as a function in the cotangent bundle. 
If $\Lambda$ is a conormal bundle of a submanifold $K$ of $\tM$, i.e. $\Lambda = N^*K$, then such distributions are also called conormal distributions. 
The space of distributions in $\tM$ associated with two cleanly intersecting conic Lagrangian manifolds $\Lambda_0, \Lambda_1 \subset T^*\tM \setminus 0$ is denoted by $\Ical^{p,l}(\Lambda_0, \Lambda_1)$. 
If $u \in \Ical^{p,l}(\Lambda_0, \Lambda_1)$, then one has $\wfset{(u)} \subset \Lambda_0 \cup \Lambda_1$ and 
\[
u \in \Ical^{p+l}(\Lambda_0 \setminus \Lambda_1), \quad  u \in \Ical^{p}(\Lambda_1 \setminus \Lambda_0)
\]
away from their intersection $\Lambda_0 \cap \Lambda_1$. The principal symbol of $u$ on $\Lambda_0$  and  $\Lambda_1$ can be defined accordingly and they satisfy some compatible conditions on the intersection. 

For more detailed introduction to Lagrangian distributions and paired Lagrangian distributions, see \cite[Section 3.2]{Kurylev2018} and \cite[Section 2.2]{Lassas2018}. 
The main reference are \cite{MR2304165, Hoermander2009} for conormal and Lagrangian distributions and
\cite{Hoop2015,Greenleaf1990,Greenleaf1993,Melrose1979} for paired Lagrangian distributions. 

{
\subsection{The causal inverse}
On a globally hyperbolic Lorentzian manifold $(M,g)$, the wave operator $\sq$ with the principal symbol $b(x, \zeta) =g^{ij}\zeta_i \zeta_j$ is normally hyperbolic, see \cite[Section 1.5]{Baer2007}.  
It has a unique casual inverse $\sq^{-1}$ according to \cite[Theorem 3.3.1]{Baer2007}. 
By \cite[Proposition 6.6]{Melrose1979}, one can symbolically construct a parametrix $Q_g$, which is the solution operator to the wave equation 
\begin{align*}
\sq v &= f, \quad \text{ on } \tM,\\
v & = 0, \quad \text{ on } \tM \setminus J^+(\supp(f)),
\end{align*}
in the microlocal sense.
It follows that $Q_g \equiv \sq^{-1}$ up to a smoothing operator.
We denote the kernel of $Q_g$ by $q(x, \tilde{x})$ and 
it is a paired Lagrangian distribution
in $\Ical^{-\frac{3}{2}, -\frac{1}{2}} (N^*\text{Diag}, \Lambda_g)$,
where $\text{Diag}$ denotes the diagonal in $M \times M$, $N^*\text{Diag}$ is its conormal bundle, and $\Lambda_g$ is the flow out of 
$N^*\text{Diag} \cap \Char(\sq)$ under the Hamiltonian vector field $H_b$. 
Here we construct the microlocal solution to the equation 
\[
\sq q(x, \tilde{x}) = \delta(x, \tilde{x}) \mod C^\infty(M \times M).
\]
using the proof of \cite[Proposition 6.6]{Melrose1979},
and 
the symbol of $Q_g$ can be found during the construction there.  
In particular, the principal symbol of $Q_g$ along $N^*\text{Diag}$  satisfying 
$
\sigma_p(\delta) = \sigma_p(\sq) \sigma_p(Q_g)
$
is nonvanishing. 
The one along $\Lambda_g \setminus N^*\text{Diag}$ solves the transport equation
\[
\mathcal{L}_{H_b}\sigma_p(Q_g) + i c\sigma_p(Q_g) = 0,
\]
where $\mathcal{L}_{H_b}$ is the Lie action of the Hamiltonian vector field $H_b$ and $c$ is the subprincipal symbol of $\sq$.  
The initial condition 
is given by restricting $\sigma_p(Q_g)|_{N^*\text{Diag}}$ to $\partial \Lambda_g$, 
see (6.7)  and Section 4 in \cite{Melrose1979}. 
Then one can solve the transport equation by integrating along the bicharacteristics. 
This implies 
the solution to the transport equation is nonzero and therefore 
$\sigma_p(Q_g)|_{\Lambda_g}$ is nonvanishing. 
See also \cite{Hoop2015, Baer2007, Greenleaf1993} for more references. 
 
We have the following proposition according to \cite[Proposition 2.1]{Greenleaf1993}, see also \cite[Proposition 2.1]{Lassas2018}.
\begin{pp}
Let $\Lambda$ be a conic Lagrangian submanifold in $T^*M \setminus 0$. 
Suppose $\Lambda$ intersects $\Char(\sq)$ transversally, such that its intersection with each bicharacteristics has finite many times. 
Then 
\[
\tQ_g: \Ical^\mu(\Lambda) \rightarrow \Ical^{p,l}(\Lambda, \Lambda^g),
\]
where $ \Lambda^g$ is the flow out of $\Lambda \cap \Char(\sq)$ under the Hamiltonian flow. 
Moreover, for $(x, \xi) \in \Lambda^g \setminus \Lambda$, we have
\[
\sigma_p(Q_g u)(x, \xi) = \sum \sigma(Q_g)(x, \xi, y_j, \eta_j)\sigma_p(u)(y_j, \eta_j), 
\]
where the summation is over the points $(y_j, \eta_j) \in \Lambda$ that lie on the bicharacteristics from $(x, \xi)$. 
\end{pp}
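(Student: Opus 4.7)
The plan is to reduce this to the composition calculus for paired Lagrangian distributions with Lagrangian distributions, following essentially the strategy of \cite{Greenleaf1993}. The starting point is that the Schwartz kernel $q(x,\tilde x)$ of $\tQ_g$ lies in $\Ical^{-3/2,-1/2}(N^*\text{Diag},\Lambda_g)$, so it can be decomposed microlocally as $q = q_0 + q_1$, where $q_0$ is supported in a small conic neighborhood of $N^*\text{Diag}$ and thus defines a pseudodifferential operator of order $-2$, and $q_1$ is supported away from the diagonal and defines a Fourier integral operator with canonical relation contained in $\Lambda_g$. Applying each piece separately to $u \in \Ical^\mu(\Lambda)$ reduces the problem to two well-understood operations.

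For the pseudodifferential part, the mapping property $\Psi^{-2}(\tM): \Ical^\mu(\Lambda) \to \Ical^{\mu-2}(\Lambda)$ is standard and contributes to the $\Lambda$-component of the output. For the FIO part $q_1$, the transversality hypothesis that $\Lambda$ intersects $\Char(\sq)$ transversally implies that the composition $\Lambda_g \circ \Lambda$ is clean, and the finite multiplicity condition makes the composition proper, so that the composed canonical relation is exactly the flowout $\Lambda^g$. Consequently $q_1 u$ is a Lagrangian distribution associated with $\Lambda^g$ away from $\Lambda$. Adding the two contributions and identifying $\Lambda\cap\Lambda^g = \Lambda\cap\Char(\sq)$ as the intersection locus (which is clean by the transversality assumption), one recognizes $\tQ_g u$ as an element of $\Ical^{p,l}(\Lambda,\Lambda^g)$, with the orders $p,l$ fixed by $\mu$ together with the order of $\tQ_g$ according to the arithmetic of the paired calculus.

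For the principal symbol formula on $\Lambda^g \setminus \Lambda$, only the FIO piece $q_1$ contributes, and the classical FIO composition formula applies. Starting from a point $(x,\xi)\in \Lambda^g\setminus \Lambda$, one flows back along the bicharacteristic through $(x,\xi)$ and collects the finitely many points $(y_j,\eta_j)\in \Lambda$ that lie on this bicharacteristic; the principal symbol of $\tQ_g u$ at $(x,\xi)$ is then the sum $\sum_j \sigma(\tQ_g)(x,\xi,y_j,\eta_j)\,\sigma_p(u)(y_j,\eta_j)$, where the $\tQ_g$-factor is read off the solution of the transport equation for $\sigma_p(\tQ_g)$ along $\Lambda_g$ with initial value at $(y_j,\eta_j)\in \Lambda\cap\Char(\sq)$, as discussed in the construction of the parametrix preceding the statement.

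The main technical hurdle is verifying the clean intersection structure required for the paired Lagrangian composition, specifically that $\Lambda_g\circ \Lambda$ composes transversally on $\Lambda^g\setminus\Lambda$ and degenerates cleanly exactly at $\Lambda\cap\Char(\sq)$ with the correct excess. The transversality of $\Lambda$ with $\Char(\sq)$ and the finite multiplicity along bicharacteristics are precisely the geometric hypotheses that make this go through, after which the conclusion is the abstract propagation statement of \cite[Prop.~2.1]{Greenleaf1993}.
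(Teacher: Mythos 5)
The paper does not actually prove this proposition: it is quoted directly from \cite[Proposition 2.1]{Greenleaf1993} (see also \cite[Proposition 2.1]{Lassas2018}), so there is no in-paper argument to compare against. Your sketch is in the spirit of the cited proof, but as written it contains a genuine gap in the decomposition step.

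The problem is the claim that the kernel $q\in \Ical^{-3/2,-1/2}(N^*\text{Diag},\Lambda_g)$ splits as $q_0+q_1$ with $q_0$ a pseudodifferential operator of order $-2$ and $q_1$ an honest FIO associated with $\Lambda_g$. The two Lagrangians intersect along $\partial\Lambda_g=N^*\text{Diag}\cap\Char(\sq)$, so any conic neighborhood of $N^*\text{Diag}$ still contains a piece of $\Lambda_g$, and $q_0$ remains a paired Lagrangian distribution there; symmetrically, $q_1$ is associated with a Lagrangian with boundary and is not a standard FIO. Even granting the splitting formally, the symbol of the ``pseudodifferential'' part behaves like $1/b(y,\eta)$ and blows up on $\Char(\sq)$, so applying it to $u\in\Ical^\mu(\Lambda)$ with $\Lambda$ crossing $\Char(\sq)$ does \emph{not} land in $\Ical^{\mu-2}(\Lambda)$: the conormal singularity of $\sigma_p(u)/b$ along $\Lambda\cap\Char(\sq)$ is precisely what generates the flowout $\Lambda^g$. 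Your splitting therefore hides the one interaction the proposition is about, and the ``standard mapping property'' you invoke for $q_0$ is false in the relevant region. The actual proof in \cite{Greenleaf1993} avoids this by conjugating with elliptic FIOs to reduce $\sq$ to the model operator $D_n$ and $\Lambda$ to a model conormal bundle, then computing the action explicitly and reading off the $I^{p,l}$ structure and the symbol on the flowout from the transport equation; alternatively one uses the composition calculus for $I^{p,l}(N^*\text{Diag},\Lambda_g)$ with Lagrangian distributions. Since your final step defers exactly to \cite[Proposition 2.1]{Greenleaf1993}, the argument as a whole is circular unless the reduction to the model case (or the paired composition calculus) is carried out.
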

}

\subsection{Distorted plane waves.}\label{subsec_planewaves}
We review the distorted plane waves constructed in \cite{Kurylev2018}.
Roughly speaking, they are conormal distributions propagating along the fixed null geodesic before the first cut point.

Let $g^+$ be a Riemannian metric on $\tM$ and $L^+ \tM$ be the bundle of future-pointing light-like vectors.
For $(x_0, \xi_0) \in L^+ \tM$ and a small parameter $s_0 >0$, 
we define 
\begin{align*}
\mathcal{W}({x_0, \xi_0, s_0}) &= \{\eta \in L^+_{x_0} \tM: \|\eta - \xi_0\|_{g^+} < s_0 \text{ with } \|\eta \|_{g^+} = \|\xi_0\|_{g^+}\}
\end{align*}
as a neighborhood of $\xi_0$ at the point $x_0$.
We denote by $\gamma_{x_0, \xi_0}(s), \ s \geq 0$ the unique null geodesic starting from $x_0$ with direction $\xi_0$, and we define
\begin{align*}
K({x_0, \xi_0, s_0}) &= \{\gamma_{x_0, \eta}(s) \in \tM: \eta \in \mathcal{W}({x_0, \xi_0, s_0}), s\in (0, \infty) \}
\end{align*}
as the subset of the light cone emanating from $x_0$ by light-like vectors in $\mathcal{W}({x_0, \xi_0, s_0})$. 
As $s_0$ goes to zero, the surface $K({x_0, \xi_0, s_0})$ tends to the geodesic $\gamma_{x_0, \xi_0}(\mathbb{R}_+)$. 
Consider the Lagrangian submanifold
\begin{align*}
\Sigma(x_0, \xi_0, s_0) =\{(x_0, r \eta^b )\in T^*\tM: \eta \in \mathcal{W}({x_0, \xi_0, s_0}), \ r\neq 0 \},
\end{align*}
which is a subset of the conormal bundle $N^*\{x_0\}$. 
We define
\begin{align*}
\Lambda({x_0, \xi_0, s_0})
= &\{(\gamma_{x_0, \eta}(s), r\dot{\gamma}_{x_0, \eta}(s)^b )\in \tM: \\
& \quad \quad \quad \quad \quad 
\eta \in \mathcal{W}({x_0, \xi_0, s_0}), s\in (0, \infty), r \in \mathbb{R}\setminus \{0 \} \}
\end{align*}
as the flow out from $\Char(\sq) \cap \Sigma(x_0, \xi_0, s_0)$ by the Hamiltonian vector field of $\sq$ in the future direction. 
Note that $\Lambda({x_0, \xi_0, s_0})$ is the conormal bundle of $K({x_0, \xi_0, s_0})$ near $\gamma_{x_0, \xi_0}(\mathbb{R}_+)$ before the first cut point of $x_0$.

Now suppose $J = 3$ or $4$ is given.
According to \cite[Lemma 3.1]{Kurylev2018}, 
we can construct distributions 
\[
u_j \in \Ical^\mu(\Lambda(x_j, \xi_j, s_0)) \text{ satisfying } \square_g u_j \in C^\infty(M), \quad j = 1, \ldots, J, 
\]
with nonzero principal symbol along $(\gamma_{x_j, \xi_j}(s), (\dot{\gamma}_{x_j, \xi_j}(s))^b )$ for $s > 0$. 
Note that $u_j \in \mathcal{D}'(\tM)$ has no singularities conormal to $\partial M$. 
Thus its restriction to the submanifold $\partial M$ is well-defined, see \cite[Corollary 8.2.7]{Hoermander2003}.
Let $f_j =  u_j|_{\partial M}$ and $v_j$ solve the boundary value problem
\begin{equation}\label{eq_v1}
\begin{aligned}
\sq v_j &= 0, & \  & \mbox{on } M ,\\
 v_j &= f_j, & \ &\mbox{on } \partial M,\\
v_j &= 0,  & \  &\mbox{for } t <0.
\end{aligned}
\end{equation}
It follows that $v_j  = u_j \mod C^\infty(M)$. 
We consider the boundary value problem (\ref{eq_problem}) for the nonlinear wave equation with the Dirichlet boundary condition 
$
f = \sum_{j=1}^J \ep_j f_j, 
$
and write the solution $p$ to (\ref{eq_problem}) as an asymptotic expansion with respect to $v_j$ later.

For $j=1,\ldots, J$,  
let $(x_j, \xi_j) \in L^+V$ 
be lightlike vectors.
In some cases, we denote this triplet or quadruplet by $(\vec{x}, \vec{\xi}) = (x_j, \xi_j)^J_{j=1}$. 
We  omit the parameters $x_j, \xi_j, s_0$ and use the following notations 
\[
\gamma_j= \gamma_{x_j, \xi_j}, \quad K_j = K({x_j, \xi_j, s_0}), \quad \Sigma_j=\Sigma({x_j, \xi_j, s_0}), \quad \Lambda_j = \Lambda({x_j, \xi_j, s_0}),
\]
if there is no confusion. 
We say $(x_j, \xi_j)_{j=1}^J$ are causally independent if
\begin{align}\label{assump_xj}
x_j \notin J^+(x_k), 
\quad \text{ for } j \neq k.
\end{align}
Note the null geodesic $\gamma_{x_j, \xi_j}(s)$ starting from $x_j \in V$ could never intersect $M$ or could enter $M$ more than once. 
Thus, we define
\begin{align}\label{def_bpep}
t_j^0  = \inf\{s > 0 : \  \gamma_{x_j, \xi_j}(s) \in M \}, \quad t_j^b  = \inf\{s > t_j^0 : \  \gamma_{x_j, \xi_j}(s) \in \tM \setminus M \}
\end{align}
as the first time when it enters $M$ and 
the first time when it leaves $M$ from inside, 
if such limits exist. 

We introduce the
definition of the regular intersection of three or
four null geodesics at a point $q$, as in \cite[Definition 3.2]{Kurylev2018}. 
\begin{df}\label{def_inter}
    Let $J = 3$ or $4$. 
    We say the geodesics corresponding to
    $(x_j, \xi_j)_{j=1}^J$ 
    intersect regularly at a point $q$,  if  one has
    \begin{enumerate}[(1)]
        \item there are $0 < s_j < \rho(x_j, \xi_j)$ such that $q = \gamma_{x_j, \xi_j}(s_j)$, for $j= 1, \ldots, J$,
        \item the vectors $\dot{\gamma}_{x_j, \xi_j}(s_j), j= 1, \ldots, J$ are linearly independent. 
    \end{enumerate}
\end{df}
It is shown in \cite[Lemma 3.5]{Kurylev2018} that for any $q \in \mathbb{W}$, 
there exists $(z, w) \in L^{+} V$  such that $q = \gamma_{z, w}(s_q)$ with $0 < s_q< \rho(z, w)$. 
Moreover, in any neighborhood of $(z, w)$,  we can find lightlike vectors $(x_j, \xi_j), \ j = 1,2,3,4$ 
such that they are causally independent
as in (\ref{assump_xj}) and the four null geodesics corresponding to $(x_j, \xi_j)$ intersect regularly at $q$.  
To determine the nonlinear terms at fixed point $q \in \mathbb{W}$, in the following we focus on $(x_j, \xi_j)_{j=1}^J$ that are causally independent and intersect regularly at $q$.

For convenience, we introduce the following definition on the intersection of three or four submanifolds as in \cite[Definition 3.1]{Lassas2018}.
\begin{df}\label{df_intersect}
We say four codimension 1 submanifolds $K_1, K_2, K_3, K_4$ intersect 4-transversally if 
\begin{enumerate}[(1)]
    \item $K_i, K_j$ intersect transversally at a codimension $2$ manifold $K_{ij}$, for $i < j$;
    \item $K_i, K_j, K_k$ intersect at a codimension $3$ submanifold $K_{ijk}$, for $i < j < k$;
    \item $K_1, K_2, K_3, K_4$ intersect at a point $q$;
    \item for any two disjoint index subsets $I, J \subset \{1, 2, 3, 4\}$, 
    the intersection of $\cap_{i \in I} K_i$ and $\cap_{j \in J} K_j$ is transversal if not empty. 
\end{enumerate}
If $K_1, K_2, K_3$ satisfy condition (1) and (2), 
then we say they intersect 3-transversally. 
\end{df}
By \cite{Lassas2018}, such $K_1, K_2, K_3, K_4$ intersect at $q$ with linearly independent normal covectors $\zeta^{(j)} \in N_q^*K_j$, $j = 1,2,3,4$. 
These covectors form a basis for the cotangent space $T^*_qM$ such that each $\zeta \in T^*_qM$ has a unique decomposition with respect to them. 
Additionally, if four null geodesics $\gamma_j, j = 1,2,3,4$ intersect regularly at $q$, then we can always construct $K_j$ with small enough $s_0$ such that 
they intersect 4-transversally at $q$. 

For convenience, we introduce the following notations
\[
\Lambda_{ij} = N^*(K_i \cap K_j), \quad \Lambda_{ijk} =  N^*(K_i \cap K_j \cap K_k), \quad \Lambda_q = T^*_q M \setminus 0,
\] 
where $q$ is the intersection point in $\capgamma$. 
We define
\[
\Lambda^{(1)} = \cup_{j=1}^4 \Lambda_j, \quad \Lambda^{(2)} = \cup_{i<j} \Lambda_{ij}, \quad 
\Lambda^{(3)} =  \cup_{i<j<k} \Lambda_{ijk}.
\]
Then we denote the flow out of $\Lambda^{(3)} \cap \Char(\sq)$ under the null bicharacteristics of $\sq$ in $T^*\tM$
by  
\[
\Lambda^{(3), g} = \{(z, \zeta) \in T^*M: \ \exists \ (y, \eta) \in \Lambda^{(3)}\cap \Char(\sq) \text{ such that } (z, \zeta) \in \Theta_{y, \eta} \}.
\]  
The flow out of $\Lambda^{(3)} \cap \Char(\sq)$ under the broken bicharacteristic arcs of $\square_g$ in $T^*M$ is denoted by 
\[
\Lambda^{(3), b} = \{(z, \zeta) \in T^*M: \ \exists \ (y, \eta) \in \Lambda^{(3)} \text{ such that } (z, \zeta) \in \Theta^b_{y, \eta}
\},
\]
see Section \ref{subsec_Qg} for the broken characteristics and the definition of $\Theta^b_{y, \eta}$. 
Let 
\begin{align*}
\Gamma({\vec{x}, \vec{\xi}}, s_0) =  (\Lambda^{(1)} \cup \Lambda^{(2)} \cup \Lambda^{(3)} \cup \Lambda^{(3),b}) \cap T^*M, 
\end{align*}
which depends on the parameter $s_0$ by definition. 
Then we define
\begin{align}\label{def_Gamma}
\unionGamma = \bigcap_{s_0>0}\Gamma({\vec{x}, \vec{\xi}}, s_0)
\end{align}
as the set containing all possible singularities caused by the Hamiltonian flow 
and the interaction of at most three distorted plane waves.

As in \cite{Kurylev2018}, to deal with the complications caused by the cut points, for $j = 1, \ldots, J$, we consider the interaction in the set
\begin{align}\label{def_nxxi}
\nxxi  = M \setminus \bigcup_{j=1}^J J^+(\gamma_{x_j, \xi_j}(\rho(x_j, \xi_j))),
\end{align}
which is the complement of the causal future of the first cut points.
In $\nxxi$, any two of the null geodesics $\gamma_{x_j, \xi_j}(\mathbb{R}_+)$ intersect at most once, by \cite[Lemma 9.13]{Beem2017}.

To deal with the complications caused by the reflection part, for $j = 1, \ldots, J$, we define
\begin{align}\label{def_ntxxi}
\ntxxi  = M \setminus \bigcup_{j=1}^J J^+(\gamma_{x_j, \xi_j}(t_j^b)),
\end{align}
as the complement of the causal future of the point $\gamma_{x_j, \xi_j}(t_j^b)$ in $M$, 
where $\gamma_{x_j, \xi_j}(\mathbb{R}_+)$ leaves $M$ from the inside for the first time. 
Note that in $\nxxi \cap \ntxxi$, each null geodesic $\gamma_{x_j, \xi_j}(\mathbb{R}_+)$ enters $M$ at most once and any two of them intersect at most once.
Let $\intM$ be the interior of $M$ and let $\Sigma(4)$ denote the permutation group of $\{1,2,3,4\}$.
In particular, assuming the four null geodesics intersect regularly at a point in $\intM$, we can show the following lemma.
\begin{lm}\label{lm_intM} 
    Suppose $(x_j, \xi_j)_{j=1}^4$ intersect regularly at $q \in \nxxi \cap \intM$.
    Let $p \in \nxxi \cap \ntxxi$. 
    \begin{itemize}
        \item[(a)] If $(p, \zeta) \in \Lambda^{(2)} \cup \Lambda^{(3)}$ for arbitrarily small $s_0 >0$, then $p \in \intM$. 
        \item[(b)] For fixed $(i,j,k,l) \in \Sigma(4)$, if $(p, \zeta) \in \Lambda_i \cap \Lambda_{jkl}^b$
        with arbitrarily small $s_0 >0$, then $p \in \intM$. 
    \end{itemize}
    
\end{lm}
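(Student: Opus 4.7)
The plan is to exploit the nesting $K_j(s_0) \subset K_j(s_0')$ for $s_0 \leq s_0'$, with $\bigcap_{s_0 > 0} K_j(s_0) = \gamma_{x_j, \xi_j}((0, \infty))$, together with the geometric fact recalled after (\ref{def_nxxi}): in $\nxxi$ any two of the $\gamma_{x_j, \xi_j}$ meet at most once, and $q$ is the unique common pairwise intersection point of the four geodesics. For part (a), $\Lambda^{(2)} \cup \Lambda^{(3)}$ is a finite union over index tuples, so along a subsequence of $s_0 \to 0$ I may fix the indices; if $(p, \zeta) \in \Lambda_{ij}(s_0)$ for these $s_0$, then $p \in K_i(s_0) \cap K_j(s_0)$, and nesting gives $p \in \gamma_{x_i, \xi_i} \cap \gamma_{x_j, \xi_j}$. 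Since $p \in \nxxi$, this intersection collapses to $\{q\}$, so $p = q \in \intM$. The case $(p, \zeta) \in \Lambda_{ijk}$ follows identically using three geodesics.

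For part (b), the same nesting argument applied to $\Lambda_i$ gives $p \in \gamma_{x_i, \xi_i}((0, \infty))$. For each small $s_0$ I pick $(y_{s_0}, \eta_{s_0}) \in \Lambda_{jkl}(s_0) \cap \Char(\sq)$ whose broken bicharacteristic passes through $(p, \zeta)$. Because $K_{jkl}(s_0) \cap \nxxi$ shrinks to $\{q\}$ and the lightlike cone at $q$ is conically compact, a subsequence satisfies $y_{s_0} \to q$ and $\eta_{s_0} \to \eta_0$ lightlike at $q$. Continuity of the broken bicharacteristic flow away from glancing rays then yields $(p, \zeta) \in \Theta^b_{q, \eta_0}$, so $p$ sits simultaneously on $\gamma_{x_i, \xi_i}$ and on a broken null geodesic issuing from the interior point $q$.

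Suppose for contradiction $p \in \partial M$. The null-convexity of $\partial M$ (\ref{def_nconvex}) forces $\gamma_{x_i, \xi_i} \cap \partial M \subset \{\gamma_{x_i, \xi_i}(t_i^0), \gamma_{x_i, \xi_i}(t_i^b)\}$, and the hypothesis $p \in \ntxxi$ excludes $\gamma_{x_i, \xi_i}(t_i^b)$. Thus $p = \gamma_{x_i, \xi_i}(t_i^0) \in J^-(q) \setminus \{q\}$. If $\eta_0$ is future-lightlike, $\Theta^b_{q, \eta_0} \subset J^+(q)$; but global hyperbolicity gives $J^+(q) \cap J^-(q) = \{q\}$, contradicting $p \in \partial M$ while $q \in \intM$. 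If $\eta_0$ is past-lightlike, the constraint that $\eta_0$ annihilates the one-dimensional $T_q K_{jkl}$ generically forces $\eta_0^\#$ transversal to $\dot{\gamma}_{x_i, \xi_i}(q)$, so the unreflected bicharacteristic from $q$ meets $\gamma_{x_i, \xi_i}$ only at $q$; any reflected past-directed arc that could reach $\gamma_{x_i, \xi_i}(t_i^0) \in \partial M$ is ruled out by the $\ntxxi$ hypothesis applied to reflection points on $\partial M$. Hence $p \in \intM$. The main obstacle is precisely this limiting argument for broken bicharacteristics in (b): one must verify continuity of the broken flow as $s_0 \to 0$ away from glancing rays (guaranteed by null-convexity) and carefully rule out past-directed reflected configurations from $q$ landing at the entry point of $\gamma_{x_i, \xi_i}$ on $\partial M$.
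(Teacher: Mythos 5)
Your part (a) is essentially the paper's own argument: shrinking $s_0$ forces $p$ onto two (or three) of the geodesics $\gamma_{x_j,\xi_j}(\mathbb{R}_+)$, and since in $\nxxi$ any two of them intersect at most once and they already meet at $q$, the point $p$ must be $q\in\intM$. No objection there.

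In part (b) the skeleton is right but there is a genuine gap in how you treat the source of the broken flow-out. The paper's proof uses that $\Lambda_{jkl}^{b}$ is generated by the \emph{causal} solution operator $\Qb_g$, so any witness $(p_0,\zeta^0)\in\Lambda_{jkl}$ with $(p,\zeta)\in\Theta^b_{p_0,\zeta^0}$ satisfies $p\geq p_0$; hence $p_0\in\nxxi$, hence (shrinking $s_0$) $p_0$ lies on all three of $\gamma_{x_j,\xi_j},\gamma_{x_k,\xi_k},\gamma_{x_l,\xi_l}$, hence $p_0=q$, and then $p\geq q$ contradicts $q> p=\gamma_{x_i,\xi_i}(t_i^0)$. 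Your future-lightlike branch reproduces exactly this. The problems are twofold. First, your convergence $y_{s_0}\to q$ silently assumes $y_{s_0}\in\nxxi$; without that, $K_{jkl}(s_0)$ only shrinks to $\gamma_{x_j,\xi_j}\cap\gamma_{x_k,\xi_k}\cap\gamma_{x_l,\xi_l}$, which beyond the cut points may contain intersection points other than $q$. The only available route to $y_{s_0}\in\nxxi$ is the causality relation $p\geq y_{s_0}$ — i.e., precisely the future-directedness whose negation you then go on to entertain. Second, the past-lightlike branch is not a proof as written: "generically forces $\eta_0^\#$ transversal" carries no force for the specific configuration at hand (and transversality at $q$ would not by itself preclude a past-directed broken arc from $q$ reaching $\gamma_{x_i,\xi_i}(t_i^0)$ after a reflection), while $\ntxxi$ is by definition the complement of $\bigcup_j J^+(\gamma_{x_j,\xi_j}(t_j^b))$ and says nothing about reflection points of past-directed arcs emanating from $q$. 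If you adopt the future-directed reading of the flow-out (which is what the causal parametrix produces and what the paper's proof relies on), the past branch is vacuous and your argument closes; under the bidirectional reading you propose, that case remains open.
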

\begin{proof}
    We prove by contradiction. 
    For (a),
    assume $p \in \tM \setminus \intM$. 
    Then we can find $K_i, K_j$ such that $p$ is in their intersection. 
    If $p \notin \gamma_{x_i, \xi_i}(\mathbb{R}_+) \cap \gamma_{x_j, \xi_j}(\mathbb{R}_+)$, then by choosing small enough $s_0$ one has $p \notin K_i \cap K_j$. 
    This implies $p = \gamma_{x_i, \xi_i}(s_i) =  \gamma_{x_j, \xi_j}(s_j)$ for some $s_i, s_j>0$. 
    In $\ntxxi$, we must have $s_i = t_i^0$ and $s_j = t_j^0$.    
    This contradicts with $p, q \in \nxxi$ by \cite[Lemma 9.13]{Beem2017}. 
    
    For (b), similarly we have $p \in \gamma_{x_i,\xi_i}(\mathbb{R}_+)$ otherwise $p \notin K_i$ for sufficiently small $s_0 > 0$. 
    Since $p \in \ntxxi$, we have $p =\gamma_{x_i, \xi_i}(t_i^0)$, if (b) is not true. 
    It follows from $(p, \zeta) \in \Lambda_{jkl}^b$ that
    there exists $(p_0, \zeta^0) \in \Lambda_{jkl}$ such that $(p, \zeta) \in \Theta^b_{p_0, \zeta^0}$. 
    With $p \in \nxxi$, we must have $p_0 \in \nxxi$ since $p \geq p_0$. 
    Assume $p_0 \notin  \gamma_{x_j,\xi_j}(\mathbb{R}_+) \cap \gamma_{x_k,\xi_k}(\mathbb{R}_+) \cap \gamma_{x_l,\xi_l}(\mathbb{R}_+)$.
    Without loss of generosity, we can assume  $p_0 \notin  \gamma_{x_j,\xi_j}(\mathbb{R}_+)$. 
    Then there exists $s_0>0$ small enough such that $p_0 \notin K_j$, which contradicts with $(p_0, \zeta^0)\in \Lambda_{jkl}$. 
    It follows that $p_0 = q$, since $q$ is the only intersection point in $\nxxi$ by \cite[Lemma 9.13]{Beem2017}.
    Then one has $\gamma_{x_i, \xi_i}(t_i^0) = p > q$, which is impossible. 
\end{proof}
\subsection{The solution operator $\Qb_g$}\label{subsec_Qg}
In this subsection, we present how the singularities propagate after applying $\Qb_g$, i.e., the solution operator to the boundary value problem \begin{equation}\label{bvp_qg}
\begin{aligned}
\sq w &= h, & \  & \mbox{on } \Mo ,\\
w &= 0 , & \  & \mbox{on } \pMo,\\
w &= 0, & \  & \mbox{for } t <0.
\end{aligned}
\end{equation}
The similar analysis is in \cite[Section 3.4]{Uhlmann2021a}. 

We denote the outward (+) and inward (-) pointing tangent bundles by 
\begin{equation}\label{def_tbundle}
\TMpm = \{(x, v) \in \partial TM: \ \pm g(v, n)>0 \},
\end{equation}
where $n$ is the outward pointing unit normal of $\partial M$.
For convenience, we also introduce the notation 
\begin{equation}\label{def_Lbundle}
\LcMpm =\{(z, \zeta)\in L^* M \text{ such that } (z, \zeta^\#)\in \TMpm \}
\end{equation}
to denote the lightlike covectors that are outward or inward pointing on the boundary.

First we recall some definitions and notations in 
\cite{Melrose1978,Melrose1982,Vasy2008,MR2304165,melrose1992atiyah} (see also \cite{LEBEAU1997,Taylor1975}). For a smooth manifold $M$ with boundary $\partial M$, 
let $\bTM$ be the compressed cotangent bundle, see \cite[Lemma 2.3]{melrose1992atiyah}. 
There is a natural map \[
\pi_b: T^*M \rightarrow \bTM
\] 
satisfying that 
it is the identity map in $T^*\intM$ and for $p \in \partial M$ it has the kernel $N^*_p(\partial M)$ and the range identified with $T^*_p(\partial M)$. 
Suppose locally $M$ is given by $(\bx, x^n)$ with $x^n \geq 0$ and the one forms are given by $ \sum \bxi \diff \bx + \xi_n \diff x^n$. 
Then in local coordinates, $\pi_b$ takes the from 
\[
\pi_b(\bx,x^n,\bxi,\xi_n)  = (\bx,x^n,  \bxi, x^n\xi_n).
\]
The compressed cotangent bundle $\bTM$ can be decomposed into the elliptic, glancing, and hyperbolic sets by 
\begin{align*}
&\mathcal{E} = \{\mu \in \bTM \setminus 0,\  \pi_b^{-1}(\mu) \cap \Char(\square_g) = \emptyset \},\\
&\mathcal{G} = \{\mu \in \bTM \setminus 0,\  \text{Card}(\pi_b^{-1}(\mu) \cap \Char(\square_g)) = 1 \},\\
&\mathcal{H} = \{\mu \in \bTM \setminus 0,\  \text{Card}(\pi_b^{-1}(\mu) \cap \Char(\square_g)) = 2 \}.
\end{align*}
Recall $\Char(\sq)$ is the characteristic set of $\sq$ and 
we define its image 
\[
\compchar = \pi_b{(\Char(\sq))}
\] as the compressed bicharacteristic set. 
Obviously the set $\compchar$ is a union of the glancing set $\mathcal{G}$ and the hyperbolic set $\mathcal{H}$.
If $\mu \in T^* \intM$ and $\mu \in \Char{(\sq)}$, then $\mu$ is in $\mathcal{G}$.
Let $\gsetint$ be the subset of $\mathcal{G}$ containing such $\mu$.    
Note  $\gsetint$ can be identified with $ T^* \intM$. 
If $\mu \in T^* \intM$ and $\mu \notin \Char({\sq)}$, then $\mu$ is in the elliptic set $\mathcal{E}$. 

The glancing set on the boundary $\mathcal{G} \setminus \gsetint$ is the set of all points $\mu \in \compchar$ such that $\pi_b^{-1}(\mu) \in T^*(\partial M)$ and the Hamilton vector field $H_p$ of $\sq$ is tangent to $\partial M$ at $\pi_b^{-1}(\mu) $. 
We define $\mathcal{G}^k$ as the  subset where $H_p$ is tangent to $\partial M$ with the order less than $k$, for $k \geq 2$, see \cite[(3.2)]{Melrose1978} and \cite[Definiton 24.3.2]{MR2304165}. 
Note that $\mathcal{G} = \gsetint \cup \mathcal{G}^2$. 
In particular, we denote by $\mathcal{G}^\infty$ the subset with infinite order of bicharacteristic tangency. 
The subset $\mathcal{G}^2 \setminus \mathcal{G}^3$ is the union of the diffractive part $\mathcal{G}_d$ and the gliding part $\mathcal{G}_g$ depending on whether $H_p^2 x^n > 0 $ or $H_p^2 x^n < 0 $. 

Next we are ready to define the generalized broken bicharacteristic of $\sq$, 
see \cite{Melrose1978, Melrose1982,MR2304165} and \cite[Definition 1.1]{Vasy2008}.
\begin{df}[{\cite[Definition 24.3.7]{MR2304165}}]\label{def_gb}
    Let $I \subset \mathbb{R}$ be an open interval and $B \subset I$ is a discrete subset. 
    A generalized broken bicharacteristic arc of $\sq$ is a map $\nu: I \rightarrow \pi_b^{-1}(\mathcal{G} \cup \mathcal{H})$ satisfying the following properties:
    \begin{enumerate}[(1)]
        \item $\nu(t)$ is differentiable and $\nu'(t) = H_p(\nu(t))$, if $\nu(t) \in T^*\intM$ 
        \item $\nu(t)$ is differentiable and $\nu'(t) = H_p^G(\nu(t))$ , if $\nu(t) \in \pi_b^{-1}( \mathcal{G}^2 \setminus \mathcal{G}_d)$, see \cite[Definition 24.3.6]{MR2304165} for the vector field $H_p^G(\nu(t))$,
        \item every $t \in B$ is isolated, and $\nu(s) \in  T^*\intM$ if $|s-t|$ is small enough and $s \neq t$.
        The limits $\nu(t\pm 0)$ exist and are different points in the same fiber of $\partial T^*M$. 
    \end{enumerate}
The continuous curve $\dot{\nu}$ obtained by mapping $\nu$ into $\compchar$ by $\pi_b$  is called a generalized broken bicharacteristic. 
    \end{df}  

If $\nu$ is a generalized bicharacteristic arc contained in $\gsetint \cup \mathcal{H}$, then we call it a broken bicharacteristic arc, see \cite[Definiton 24.2.2]{MR2304165}, which is roughly speaking the union of bicharacteristics of $\sq$ over $\intM$ with reflection points on the boundary. 
By the definition, a broken bicharacteristic arc arrives transversally to $\partial M$ at $\nu(t-0)$ and then leaves the boundary transversally from the reflected point $\nu(t+0)$, with the same projection in $T^*\partial M \setminus 0$. 
The image of such $\nu$ under $\pi_b$ is called a broken bicharacteristic.   

In this paper, with the assumption that $\partial M$ is null-convex, 
the generalized broken bicharacteristics we consider in Section \ref{Sec_threewaves} and \ref{Sec_fourwaves} are contained in $\gsetint \cup \mathcal{H}$, and therefore are all broken bicharacteristics. 
We denote
the broken bicharacteristic arc of $\sq$ that contains the covector $(y, \eta) \in L^*M$ by $\Theta^b_{y, \eta}$. 
According to \cite[Corollary 24.3.10]{MR2304165}, the arc $\Theta^b_{y, \eta}$ is unique for each 
$\pi_b(y, \eta) \notin \mathcal{G}^\infty$. 
The following lemma shows that with proper assumptions on the boundary, a generalized bicharacteristics passing a point in $\gsetint \cup \mathcal{H}$ is always contained in $\gsetint \cup \mathcal{H}$, i.e., is always a broken bicharacteristic. 

\begin{lm}\label{lm_gh}
    Let $(M,g)$ be a Lorentzian manifold with timelike 
    boundary $\partial M$. 
    Suppose $\partial M$ is null-convex, see (\ref{def_nconvex}). 
    If $\dot{\nu}$ is a future pointing generalized bicharacteristic with $\dot{\nu}(0) \in \gsetint \cup \mathcal{H}$, then $\dot{\nu} \subset \gsetint \cup \mathcal{H}$ and therefore it is a future pointing broken characteristic. 
\end{lm}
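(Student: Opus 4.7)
The plan is to proceed by contradiction, exploiting the null-convexity of $\partial M$ to rule out the gliding behavior that could force the generalized bicharacteristic out of $\gsetint \cup \mathcal{H}$. Assume $S = \{t > 0 : \dot\nu(t) \notin \gsetint \cup \mathcal{H}\}$ is nonempty and set $t_0 = \inf S$. The continuity of $\dot\nu$ into $\compchar$, combined with the openness of $\gsetint \cup \mathcal{H}$ in $\compchar$ and the hypothesis $\dot\nu(0) \in \gsetint \cup \mathcal{H}$, forces $t_0 > 0$ and $\dot\nu(t_0) \in \mathcal{G}^2 = \mathcal{G} \setminus \gsetint$. Let $p_0 \in \partial M$ be the base point of $\dot\nu(t_0)$ and $(y_0, \eta_0) = \pi_b^{-1}(\dot\nu(t_0)) \in \Char(\sq)$.

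Working in boundary-defining coordinates $(\bx, x^n)$ near $p_0$ with $M = \{x^n \ge 0\}$, I examine the scalar function $f(t) = x^n(\pi_b^{-1}(\dot\nu(t)))$ on a one-sided neighborhood of $t_0$. For $t < t_0$ close to $t_0$, the membership $\dot\nu(t) \in \gsetint \cup \mathcal{H}$ puts the underlying covector in $T^*M$, so $f(t) \ge 0$, while $f(t_0) = 0$. The defining condition of $\mathcal{G}$ gives $f'(t_0) = H_p x^n|_{(y_0, \eta_0)} = 0$, and a direct computation with the Hamilton equations for the metric Hamiltonian identifies $f''(t_0) = H_p^2 x^n|_{(y_0, \eta_0)}$ with a positive multiple of $\kappa(v_0, v_0)$, where $v_0 \in T_{p_0}\partial M$ is the null tangent vector associated to $(y_0, \eta_0)$ and $\kappa$ is the second fundamental form appearing in \eqref{def_nconvex}. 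The null-convexity assumption then yields $H_p^2 x^n \ge 0$ at $(y_0, \eta_0)$, immediately excluding the gliding set $\mathcal{G}_g$ where $H_p^2 x^n < 0$.

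Since $\dot\nu(t_0) \notin \mathcal{G}_g$, it must lie in the diffractive set $\mathcal{G}_d$ or in the higher-tangency stratum $\mathcal{G}^3$. In the diffractive case, the Hamilton integral curve through $(y_0, \eta_0)$ is a smooth null geodesic whose contact with $\partial M$ at $t_0$ is isolated; by the uniqueness of generalized bicharacteristics outside $\mathcal{G}^\infty$ (\cite[Corollary~24.3.10]{MR2304165}), $\dot\nu$ coincides with this smooth geodesic on a two-sided neighborhood of $t_0$ and re-enters $\gsetint$ for $t$ slightly past $t_0$, contradicting $t_0 = \inf S$; the non-generic $\mathcal{G}^3$ case is dispatched by iterating on higher derivatives of $f$, the sign of each again being controlled by null-convexity. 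Hence $S = \emptyset$, $\dot\nu \subset \gsetint \cup \mathcal{H}$, and $\dot\nu$ is a future-pointing broken bicharacteristic in the sense of \cite[Definition~24.2.2]{MR2304165}. The main obstacle is the identification of $H_p^2 x^n$ at a null tangent covector as a positive multiple of $\kappa(v_0, v_0)$ with the correct sign, which is a routine but careful unpacking of the Hamilton equations for the wave operator in Fermi-type coordinates adapted to $\partial M$, with the orientation of the outward normal tracked throughout.
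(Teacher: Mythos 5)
Your overall strategy (reduce to a first glancing time $t_0$ and analyze $H_p^2x^n$ there) is reasonable, but the key sign is backwards, and this breaks the case analysis. With $x^n\ge 0$ a boundary defining function of $M$ and $\nu$ the \emph{outward} normal, null-convexity $\kappa(v,v)\ge 0$ forces $H_p^2x^n\le 0$ at a glancing point, not $\ge 0$: a null geodesic tangent to a null-convex boundary curves \emph{away} from $M$, so $x^n$ has a (possibly degenerate) maximum along it. Check this on $\Omega=B(0,R)\subset\mathbb{R}^3$ with $g=-\diff t^2+|\diff x'|^2$: for the tangent null geodesic $(t,x_0+tw)$ one gets $\frac{d^2}{dt^2}(R-|x_0+tw|)=-1/R<0$ while $\kappa(v,v)=1/R>0$. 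So null-convexity puts glancing points on the \emph{gliding} (or higher-order) side $\mathcal{G}_g\cup\mathcal{G}^3$ and excludes the diffractive set $\mathcal{G}_d$ — exactly the opposite of what you claim. Since the gliding set is precisely where a generalized bicharacteristic can leave $\gsetint\cup\mathcal{H}$ for an interval of time, your argument does not rule out the dangerous case. (The inequality $H_p^2x^n\ge 0$ that you want actually comes from a different source: the arc approaches $(y_0,\eta_0)$ from the interior along the free flow with $f(t)=x^n\ge 0$ for $t<t_0$ and $f(t_0)=f'(t_0)=0$, which forces $f''(t_0)\ge 0$. Combining this with null-convexity's $f''(t_0)\le 0$ pushes the tangency to higher order, and one must iterate; this is essentially the content of the transversality statement \cite[Proposition 2.4]{Hintz2017} that the paper invokes.)

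There is a second, independent gap: your treatment of the diffractive case is not a contradiction. If $\dot\nu(t_0)\in\mathcal{G}_d$, then $\dot\nu(t_0)\notin\gsetint\cup\mathcal{H}$, so $t_0$ itself lies in $S$ and the conclusion of the lemma already fails at $t_0$; the fact that the curve re-enters $\gsetint$ for $t>t_0$ is irrelevant and contradicts nothing. Likewise "iterating on higher derivatives" for $\mathcal{G}^3$ is asserted rather than proved. For comparison, the paper's proof sidesteps the stratification of $\mathcal{G}^2$ entirely by an alternation argument: a point of $\mathcal{H}$ sends the arc transversally into $T^*\intM$, where it is an ordinary null bicharacteristic; its projection is a null geodesic in $\intM$, which by \cite[Proposition 2.4]{Hintz2017} (this is where null-convexity enters) can only meet $\partial M$ transversally, i.e.\ again in $\mathcal{H}$. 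Thus no glancing point over $\partial M$ is ever reached. If you want a self-contained proof along your lines, you must (i) fix the sign of $H_p^2x^n$ versus $\kappa$, (ii) use the one-sided approach from the interior as the source of the opposite inequality, and (iii) carry out the higher-order iteration honestly — at which point you will essentially have reproved the cited transversality result.
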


\begin{proof}
    If $\dot{\nu}(0) \in \mathcal{H}$, then there are two different points in $\pi_b^{-1}(\dot{\nu})(0) \cap \Char({\sq})$. 
    In this case suppose $\nu^\pm \in T^\pm_{\partial M}  M$ such that $\pi_b(\nu^\pm) = \dot{\nu}(0)$.
    By Definition \ref{def_gb}, the generalized bicharacteristic arc  $\nu $ of $\dot{\nu}$ has $\nu(0-) = \nu^+$ and $\nu(0+) = \nu^-$. 
    It leaves $\partial M$ transversally after $t >0$ and one has $\dot{\nu}(t) \in \gsetint$ for small enough positive $t$. 
    
    Now we consider the second case where $\dot{\nu}(0) \in \gsetint$. 
    The generalized bicharacteristic arc  $\nu $ of $\dot{\nu}$ has $\nu(0) \in T^* \intM$ and therefore near $\nu(0)$ it is the null bicharacteristic of $\sq$. 
    Let $\pi: T^*M \rightarrow M$ be the natural projection. 
    Then $\pi(\nu)(t)$ near $t = 0$ is a light-like geodesic in $\intM$. 
    If $\pi(\nu)(t)$ hits the boundary at $\pi(\nu)(t_0) = p_0$, 
    then by \cite[Proposition 2.4]{Hintz2017} the null-convex boundary implies that the intersection is transversal with the tangent vector $v^+$ such that $(p_0, v^+) \in T^+_{\partial M} M$. 
    Let $v^- = v^+  - 2 g(v^+, n) n$ and it follows that
    $
    g(v^-, n) < 0, \  g(v^-, v^-) = 0
    $
    and $\pi_b(v^+) = \pi_b(v^-)$.
    Thus, $\dot{\nu}(t_0) \in \mathcal{H}$ and we arrive at the first case. 
    
    Combining the two cases, we have $\dot{\nu}$ is always contained in $\gsetint \cup \mathcal{H}$. 
    \end{proof}
Let $\dot{\mathcal{D}}'(M)$ be the set of distributions supported in $M$. 
The boundary wave front set $\wfset_b(u)$ of a distribution $u \in \dot{\mathcal{D}'}(M)$ is defined by 
\[
\wfset_b(u)  = \bigcap \pi_b(\Char (B)),
\]
where the intersection takes for any properly supported $B \in \Psi^0_b(M)$ and $Bu \in \dot{\mathcal{A}}(M)$.  
Here $\Psi^0_b(M)$ is the set of b-pseudodifferential operators on $M$ of order zero, for more details see \cite[Definition 18.3.18]{MR2304165}. 
The set $\dot{\mathcal{A}}(M) \equiv \bigcup_m I^m(M, \partial M)$ contains distributions that are smooth in $\intM$ and have tangential smoothness at $\partial M$, where $I^m(M, \partial M)$ denotes the class of conormal distributions on $\partial M$ of order $m$. 
Note that away from the boundary it coincides with the usual definition of wave from set, i.e. $\wfset_b(u)|_{\intM} = \wfset(u|_{\intM})$.  
We present the result about the singularities of solutions to the boundary value problem (\ref{bvp_qg}) in the following proposition according to \cite{Melrose1978, Melrose1982}, \cite[Theorem 8.1]{Vasy2008}, and  \cite{MR2304165}. 

\begin{pp} \label{pp_wfb}
    Let $h \in \dot{\mathcal{D}'}(M)$ and $w = \Qb_g(h)$ be the solution to the boundary value problem (\ref{bvp_qg}) in $M$. 
    Then \[
    \wfset_b(w) \setminus \wfset_b(h) \subset \compchar
    \] is a union of maximally extended generalized characteristics of $\sq$. 
    
 \end{pp}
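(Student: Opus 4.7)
The plan is to invoke the classical propagation of singularities theory for boundary value problems for normally hyperbolic operators, essentially as developed by Melrose--Sjöstrand and extended by Vasy, and to verify that the present setting satisfies the hypotheses of those theorems. Since $\sq$ has real scalar principal symbol $b(x,\zeta) = g^{ij}\zeta_i\zeta_j$, it fits into the class of operators for which the b-wave front set calculus of \cite{MR2304165} and \cite{Vasy2008} is directly applicable. The strategy is thus to decompose the argument into (i) microlocal elliptic regularity off the characteristic set, and (ii) propagation along generalized broken bicharacteristics on the compressed characteristic set $\compchar$, and to note that the null-convexity of $\partial M$ ensures that no pathological glancing behavior enters.

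First I would check the elliptic regularity step. If $\rho \in \bTM \setminus 0$ lies in the elliptic set $\mathcal{E}$, then by definition $\pi_b^{-1}(\rho) \cap \Char(\sq) = \emptyset$, so a properly supported b-pseudodifferential operator $B \in \Psi^0_b(M)$ microlocally supported near $\rho$ can be inverted modulo smoothing and tangential-smooth errors against $\sq$. Applying this parametrix to the equation $\sq w = h$ with the vanishing Dirichlet condition $w|_{\partial M} = 0$ shows $\rho \notin \wfset_b(w)$ whenever $\rho \notin \wfset_b(h)$. This gives the inclusion $\wfset_b(w) \setminus \wfset_b(h) \subset \compchar$.

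Next I would establish the propagation part. For points $\rho \in \gsetint$ in the interior glancing (i.e., points over $\intM$), the statement is the classical interior propagation of singularities of Hörmander: $\wfset(w|_{\intM}) \setminus \wfset(h|_{\intM})$ is invariant under the Hamilton flow of $b$. For points $\rho \in \mathcal{H}$ on the hyperbolic boundary, one uses the standard reflection argument: locally near a hyperbolic boundary point one may write $w$ as a sum of incoming and outgoing half-wave parametrices matched by the Dirichlet condition, and the resulting transport shows that $\wfset_b(w)$ is invariant along the broken bicharacteristic arc obtained by concatenating the incoming and reflected pieces. For glancing points $\mathcal{G}^2 \setminus \gsetint$, one invokes the Melrose--Sjöstrand theorem in the form stated in \cite[Theorem 24.5.3]{MR2304165} (see also \cite[Theorem 8.1]{Vasy2008}), which guarantees that $\wfset_b(w) \setminus \wfset_b(h)$ is a union of maximally extended generalized broken bicharacteristics of $\sq$.

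The main point to verify, and the only step where the geometric hypotheses of this paper play a role, is that the generalized bicharacteristics we produce are in fact ordinary broken bicharacteristics; but this is precisely the content of Lemma \ref{lm_gh}, whose assumption of a null-convex timelike boundary excludes gliding rays and forces every generalized bicharacteristic through $\gsetint \cup \mathcal{H}$ to remain in $\gsetint \cup \mathcal{H}$. Thus the main obstacle, namely the possibility of infinite-order tangency at $\mathcal{G}^\infty$ or of gliding segments along $\partial M$, does not occur under our assumptions. Combining the interior propagation, the hyperbolic reflection, and Lemma \ref{lm_gh}, one concludes that $\wfset_b(w) \setminus \wfset_b(h)$ is a union of maximally extended generalized characteristics (in fact, broken bicharacteristics) of $\sq$, which is the claimed statement.
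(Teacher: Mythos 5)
Your proposal is correct and follows essentially the same route as the paper, which simply derives this proposition from the classical propagation-of-singularities results of Melrose--Sj\"ostrand, H\"ormander (Theorem 24.5.3 in \cite{MR2304165}), and Vasy \cite{Vasy2008} for boundary value problems with real principal symbol. One small remark: the null-convexity argument via Lemma \ref{lm_gh} is not needed for the proposition itself, which only asserts a union of \emph{generalized} characteristics; that refinement to broken bicharacteristics is the content of the subsequent corollary.
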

Combining this proposition and Lemma \ref{lm_gh}, we have the following corollary. 
\begin{corollary}
In particular, if $\partial M$ is timelike and null-convex 
with 
$\wfset_b(h)$ contained in $\gsetint \cup \mathcal{H}$, then 
\[
\wfset_b(w) \subset \gsetint \cup \mathcal{H}
\]
and is a union of broken bicharacteristics. 
\end{corollary}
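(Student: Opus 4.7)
The plan is to combine the propagation statement of Proposition \ref{pp_wfb} with the trapping property of $\gsetint \cup \mathcal{H}$ established in Lemma \ref{lm_gh}. Let $\mu \in \wfset_b(w)$. If $\mu \in \wfset_b(h)$, then by hypothesis $\mu \in \gsetint \cup \mathcal{H}$ and there is nothing to show. So assume $\mu \in \wfset_b(w) \setminus \wfset_b(h)$. By Proposition \ref{pp_wfb}, $\mu$ lies on a maximally extended generalized bicharacteristic $\dot{\nu}$ of $\sq$ contained in $\compchar$.

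Next I would trace $\dot{\nu}$ backwards from $\mu$ to identify a contact point with $\wfset_b(h)$. Because $w$ solves (\ref{bvp_qg}) with vanishing data for $t<0$, the arc cannot extend into the past indefinitely while staying inside $\wfset_b(w) \setminus \wfset_b(h)$; otherwise maximality of $\dot{\nu}$ and the causality of $\Qb_g$ would produce a singularity of $w$ in the region $\{t<0\}$, contradicting $w|_{t<0}=0$. Hence there is a smallest past parameter value $t_0$ with $\dot{\nu}(t_0) \in \wfset_b(h)$, and by hypothesis $\dot{\nu}(t_0) \in \gsetint \cup \mathcal{H}$.

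I would then invoke Lemma \ref{lm_gh} at $\dot{\nu}(t_0)$: since $\partial M$ is timelike and null-convex, a future-pointing generalized bicharacteristic passing through $\gsetint \cup \mathcal{H}$ stays in $\gsetint \cup \mathcal{H}$ and is therefore a broken bicharacteristic. Applying this to the forward reparametrization of $\dot{\nu}$ starting at $t_0$ shows that the entire arc $\dot{\nu}|_{[t_0,\infty)}$, which contains $\mu$, lies in $\gsetint \cup \mathcal{H}$. In particular $\mu \in \gsetint \cup \mathcal{H}$, and since $\mu$ was arbitrary we conclude $\wfset_b(w) \subset \gsetint \cup \mathcal{H}$ and that it is a union of broken bicharacteristics.

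The only delicate step is the backward tracing: one must verify that a point of $\wfset_b(w)\setminus\wfset_b(h)$ necessarily lies on a generalized bicharacteristic that meets $\wfset_b(h)$ in its past. This is where maximality in Proposition \ref{pp_wfb} interacts with the vanishing of $w$ for $t<0$ and the global hyperbolicity of $(M,g)$. Modulo that standard boundary-propagation argument (already encoded in \cite{Melrose1978, Melrose1982, MR2304165, Vasy2008}), the corollary reduces to a direct application of Lemma \ref{lm_gh}, with no further microlocal input required.
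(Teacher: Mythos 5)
Your argument is exactly the paper's intended one: the corollary is stated there as an immediate combination of Proposition \ref{pp_wfb} with Lemma \ref{lm_gh}, and your proposal carries out precisely that combination, merely filling in the (standard) backward-tracing step showing each maximally extended generalized characteristic in $\wfset_b(w)\setminus\wfset_b(h)$ must meet $\wfset_b(h)$ in its past because $w$ vanishes for $t<0$. The proof is correct and takes essentially the same route as the paper.
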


\subsection{The asymptotic expansion}\label{subsec_assyp}
Let $f = \sum_{j = 1}^J \epsilon_j f_j$, where $J = 3$ or $4$. 
The small boundary data $f_j$ are properly chosen as before.
Let $v_j$ solve the boundary value problem (\ref{eq_v1})
and recall we write $w  = \Qbg(h)$ if $w$ solves the boundary value problem (\ref{bvp_qg}).

Let $v = \sum_{j=1}^J \ep_j v_j$ and by (\ref{eq_problem}) we have
\[
\sq (p -v) = F(x, p,\partial_t p, \partial^2_t p). 
\]
It follows from (\ref{eq_nlterm}) that
\begin{align}\label{expand_u}
p &= v + \sum_{m=1}{\Qbg( \beta_{m+1}(x) \partial_t^2 (p^{m+1}))}, \nonumber \\
& = v + {A_2 + A_3 + A_4 + \dots},
\end{align}
where we write the term $\sum_{m=1}\Qbg(\beta_{m+1}(x) \partial_t^2 (p^{m+1}))$ by the order of $\epsilon$-terms, such that $A_2$ denotes the terms with $\epsilon_i \epsilon_j$, 
$A_3$ denotes the terms with $\epsilon_i \epsilon_j \epsilon_k$, and $A_4$ denotes the terms with $\epsilon_i \epsilon_j \epsilon_k \epsilon_l$, for $1 \leq i,j,k,l \leq J$. 
By (\ref{expand_u}), one can find the expansions of $A_2, A_3, A_4$ as
\begin{align*}
A_2 &= \Qbg(\beta_2 \partial_t^2(v^2)),\\
A_3 &= \Qbg(2\beta_2 \partial_t^2(vA_2)+\beta_3 \partial_t^2(v^3))\\
A_4 &= \Qbg(2 \beta_2 \partial_t^2(vA_3) + \beta_2\partial_t^2(A_2A_2) + 3\beta_3\partial_t^2(v^2 A_2) + \beta_4\partial_t^2(v^4)).
\end{align*}
For $N \geq 5$, we can write
\begin{align*}
A_N= \Qbg(\beta_N\partial_t^2(v^N)) + \mathcal{Q}_{N}(\beta_2, \beta_3, \ldots, \beta_{N-1}), 
\end{align*}
where $\mathcal{Q}_{N}(\beta_2, \beta_3, \ldots, \beta_{N-1})$ contains the terms involved only with $\beta_2, \ldots, \beta_{N-1}$. 
Note that $v$ appears $j$ times in each $A_j$, $j = 2, 3, 4$. 
Therefore, we introduce the notation $A_2^{ij}$ to denote the result if we replace $v$ by $v_i, v_j$ in $A_2$ in order, and similarly the notations
$A_3^{ijk}$, $A_4^{ijkl}$,
such that 
\[
A_2 = \sum_{i,j} \ep_i \ep_j A_2^{ij}, \quad 
A_3 = \sum_{i,j, k} \ep_i\ep_j\ep_k  A_3^{ijk},\quad  
A_4 =\sum_{i,j, k,l} \ep_i\ep_j\ep_k \ep_l A_4^{ijkl}.
\] 
More explicitly, we have
\begin{align}\label{eq_A}
\begin{split}
A_2^{ij} &= \Qbg(\beta_2 \partial_t^2(v_iv_j)),\\
A_3^{ijk} &= \Qbg(2\beta_2 \partial_t^2(v_iA_2^{jk})+\beta_3 \partial_t^2(v_i v_j v_k))\\
A_4^{ijkl} &= \Qbg(2 \beta_2 \partial_t^2(v_iA_3^{jkl}) + \beta_2\partial_t^2(A_2^{ij}A_2^{kl}) + 3\beta_3\partial_t^2(v_i v_j A_2^{kl}) + \beta_4\partial_t^2(v_iv_jv_kv_l)).
\end{split}
\end{align}

\section{The nonlinear interaction of three waves}\label{Sec_threewaves}
In this section, we consider the interaction of three distorted plane waves. 
Suppose $(x_j, \xi_j)_{j=1}^3$ intersect regularly at $q \in \intM$ and are casually independent as in (\ref{assump_xj}).
Let $K_j = K(x_j, \xi_j, s_0)$ and $\Lambda_j = \Lambda(x_j, \xi_j, s_0)$ be defined as in Section \ref{subsec_planewaves}. 
With sufficiently small $s_0 >0$, we can assume the submanifolds $K_1, K_2, K_3$ intersect 3-transversally, i.e.,
\begin{itemize}
    \item[(1)] $K_i$ and $K_j$ intersect transversally at a codimension $2$ submanifold $K_{ij}$, for $1 \leq i < j \leq 3$; 
    \item[(2)] $K_1, K_2, K_3$ intersect transversally at a codimension $3$ submanifold $K_{123}$.  
\end{itemize}
By Lemma \ref{lm_intM}, in $\nxxi \cap \ntxxi$, one has $K_{ij}, K_{ijk} \subset \intM$ with sufficiently small $s_0$, and therefore
$
\pi_b(\Lambda_{ij}), \pi_b(\Lambda_{ijk}) \subset \gsetint
$
for $ 1 \leq i < j \leq 3$. 

Recall we can construct distorted waves $u_j$ associated with $(x_j, \xi_j)_{j=1}^3$ such that
\[
u_j \in I^{\mu}(\Lambda(x_j, \xi_j, s_0)), \quad j = 1,2,3,
\] 
solves the linearized wave problem in $M$, i.e.,  $\sq u_j \in C^\infty(M)$, with the principal symbol nonvanishing along $\gamma_{x_j, \xi_j}(\mathbb{R}_+)$. 
Let $f_j = u_j|_{\partial M}$ and $f = \sum_{j = 1}^3 \ep_j f_j$  as the Dirichlet data for (\ref{eq_problem}).
Suppose $v_j$ solves (\ref{eq_v1}).
It follows that $v_j$ is equal to $u_j$ module $C^\infty(M)$. 
We define
\[
\mathcal{U}^{(3)} = \partial_{\epsilon_1}\partial_{\epsilon_2}\partial_{\epsilon_3} u |_{\epsilon_1 = \epsilon_2 = \epsilon_3=0},
\]
and combine (\ref{expand_u}), (\ref{eq_A}) to have
\begin{align*}
\mathcal{U}^{(3)} 
&=  \sum_{(i,j,k) \in \Sigma(3)} A_3^{ijk} 
=   \sum_{(i,j,k) \in \Sigma(3)} \Qbg(2\beta_2 \partial_t^2(v_iA_2^{jk})+\beta_3 \partial_t^2(v_i v_j v_k)).
\end{align*}
Note that $\mathcal{U}^{(3)}$ is not the third order linearization of $\Lambda_F$ but they are  related by 
\begin{align}\label{eq_LambdaU3}
\partial_{\epsilon_1}\partial_{\epsilon_2}\partial_{\epsilon_3} \Lambda_F (f) |_{\epsilon_1 = \epsilon_2 = \epsilon_3=0} = \lge \nu, \nabla \mathcal{U}^{(3)} \rge|_{\partial M}.
\end{align}
As in \cite{Hintz2020}, we introduce the trace operator $\mathcal{R}$ on $\pM$.
It is an FIO and maps distributions in $\mathcal{E}'(M)$ whose singularities are away from $N^*(\pM)$ to $\mathcal{E}'(\pM)$, see \cite[Section 5.1]{Duistermaat2010}.
Notice for any timelike covector $(y_|, \eta_|) \in T^* \partial M \setminus 0$, there is exactly one outward pointing lightlike covector $(y, \eta^+)$ and one inward pointing lightlike covector $(y, \eta^-)$ satisfying $y_| = y,\  \eta_| = \eta^\pm|_{T^*_{y} \partial M}$. 
The trace operator $\mathcal{R}$ has
a nonzero principal symbol at such $(y_|, \eta_|, y, \eta^+)$ or $(y_|, \eta_|, y, \eta^-)$ .  

We emphasize that $v_j \in \dot{\mathcal{D}}'(M)$ and  
we can always identify it as en element of $\mathcal{D}'(\tM)$.
Particularly in $\nxxi \cap \ntxxi$, it can be identified as a conormal distribution in $\tM$. 
Note that $v_iv_j$ or $v_iv_jv_k$ has singularities away from $N^*\partial M$, since we have $K_{ij}, K_{ijk} \subset \intM$.
Then we use Proposition \ref{pp_wfb} and its corollary to analyze the singularities of $\Qbg(v_iv_j)$ or $\Qbg(v_iv_jv_k)$. 
This is the same argument we use in \cite[Section 4, 5]{Uhlmann2021a}. 
We present the proof in the following for completeness. 
\subsection{The analysis of $A_2^{ij}$} 
First we analyze the singularities of 
 \[
A_2^{ij} =\Qbg(\beta_2 \partial_t^2(v_iv_j)), \quad 1 \leq i < j \leq 3.
\] 
By \cite[Lemma 3.3]{Lassas2018} and \cite[Lemma 4.1]{Wang2019}, one has 
\[
\beta_2 \partial_t^2(v_iv_j) \in I^{\mu+2, \mu+1}(\Lambda_{ij}, \Lambda_i) + I^{\mu+2, \mu+1}(\Lambda_{ij}, \Lambda_j).
\]
In the following, let $(q, \zj) \in \Lambda_j$ and we write $\zeta^{(j)} = (\zjz, \zeta_1^{(j)}, \zeta_2^{(j)}, \zeta_3^{(j)})$, for $j=1,2,3$.   
With $K_i, K_j$ intersecting transversally, any $(q, \zeta) \in \Lambda_{ij}$ has a unique decomposition $\zeta = \zi + \zj$. 
Away from 
$\Lambda_i$ and $\Lambda_j$, the principal symbol of $\beta_2 \partial_t^2(v_iv_j)$ equals to
\[
-(2\pi)^{-1}\beta_2 (\zeta_0^{(i)}+ \zeta_0^{(j)})^2 {\sigmp}(v_i) (q, \zeta^{(i)}) {\sigmp}(v_j) (q, \zeta^{(j)})
\] 
at $(q, \zeta) \in \Lambda_{ij}$.

Note that $\beta_2 \partial_t^2(v_iv_j)$ is also a distribution supported in $M$.
Its boundary wave front set is contained in $\pi_b(\Lambda_i \cup \Lambda_j\cup {\Lambda_{ij}})$ and thus 
as a subset of $\mathcal{G}^{\text{int}} \cup \mathcal{H}$.
Then by Proposition \ref{pp_wfb} and its corollary, the set $\wfset_b(A_2^{ij})$ is contained in the union of $\pi_b(\Lambda_i \cup \Lambda_j\cup {\Lambda_{ij}})$ and their flow out under the broken bicharacteristics.
We notice that away from $J^+(\gamma_{x_i, \xi_i}(t_i^b)) $ and $ J^+(\gamma_{x_i, \xi_i}(t_j^b)) $, there are no new singularities produced by the flow out. 

In $\ntxxi$, we identify $\Qbg$ by $Q_g$, the causal inverse of $\sq$ on $\tM$, to have
\[
A_2^{ij} \in I^{\mu+1, \mu}(\Lambda_{ij}, \Lambda_i) + I^{\mu+1, \mu}(\Lambda_{ij}, \Lambda_j),
\]
by \cite[Lemma 3.4]{Lassas2018}. 
Here we regard the restriction of $A_2^{ij}$ to $\nxxi \cap \ntxxi$ as a distribution in  $ \tM$.
Additionally, 
at $(q, \zeta) \in \Lambda_{ij}$ away from $\Lambda_i$ and $\Lambda_j$,  
the principal symbol equals to
\begin{align*}
{\sigmp}(A_2^{ij}) (q, \zeta) &= - (2\pi)^{-1}  \beta_2 \frac{(\zeta_0^{(i)}+ \zeta_0^{(j)})^2}{|\zeta^{(i)} + \zeta^{(j)}|_{g^*}^{2}}
{\sigmp}(v_i) (q, \zeta^{(i)}) {\sigmp}(v_j) (q, \zeta^{(j)}).
\end{align*}

\subsection{The analysis of $A_3^{ijk}$}
Recall
\[
A_3^{ijk} = \Qbg(2\beta_2 \partial_t^2(v_iA_2^{jk})+\beta_3 \partial_t^2(v_i v_j v_k)). 
\]
The following proposition describes the singularities of $A_3^{ijk}$ produced by three waves interaction. 

\begin{pp}\label{pp_Aijk}
    Suppose $K_i, K_j, K_k$ intersect 3-transversally at $K_{ijk}$. 
    Then in $\nxxi \ \cap \ \ntxxi$, for the definition see (\ref{def_nxxi}) and (\ref{def_ntxxi}), we have the following statements. 
    \begin{enumerate}[(a)]
        \item  There is a decomposition $A_3^{ijk} = \tw_0 + \tw_1 + \tw_2 + \tw_3$ with 
        \begin{align}\label{eq_Aijk}
        \begin{split}
        &\tw_0 \in I^{3\mu + 3}(\Lambda_{ijk}), 
        \quad \wfset_b(\tw_1) \subset \pi_b((\Lambda_{ijk}^{g}(\epsilon) \cap \Lambda_{ijk}) \cup \Lambda_{ijk}^{b}),\\
        &\wfset_b(\tw_2) \subset \pi_b(\Lambda^{(1)} \cup (\Lambda^{(1)}(\epsilon) \cap \Lambda_{ijk}) \cup \Lambda_{ijk}^{b}),
        \quad  \wfset(\tw_3)\subset \Lambda^{(1)} \cup \Lambda^{(2)}.
        \end{split}
        \end{align}
        In particular, for $(q, \zeta) \in \Lambda_{ijk}$, the leading term $\tw_0$ has the principal symbol (\ref{eq_tw0ps}).
        \item Let $(y, \eta) \in \LcMpo$ be a covector
        lying along the forward null-bicharacteristic starting at 
        $(q, \zeta) \in \Lambda_{ijk}$. 
        Suppose $(y, \eta)$ is away from $\Lambda^{(1)}$. 
        Then $\sigmp(\mathcal{U}^{(3)})(y, \eta)$
        is given in (\ref{eq_lambdaps}).
        \item 
        Let $(y_|, \eta_|)$ be the projection of $(y, \eta)$ on the boundary. 
        Moreover, we have 
        \[
        {\sigmp}({\partial_{\epsilon_1}\partial_{\epsilon_2}\partial_{\epsilon_3} \Lambda_F(f) |_{\epsilon_1 = \epsilon_2 = \epsilon_3=0}})(y_|, \eta_|) = \iota  \lge \nu, \eta \rge_g
        {\sigmp}(\mathcal{U}^{(3)})(y_|, \eta_|). 
        \]
    \end{enumerate}
\end{pp}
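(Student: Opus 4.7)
The plan is to unwind the formula
\[
A_3^{ijk} = \Qbg\bigl(2\beta_2 \partial_t^2(v_i A_2^{jk}) + \beta_3 \partial_t^2(v_i v_j v_k)\bigr),
\]
compute the wavefront set and the principal symbol of each source term, and then propagate through $\Qbg$. For the triple product, the 3-transversal intersection of $K_i, K_j, K_k$ together with $v_l \equiv u_l$ modulo $C^\infty$ and $u_l \in I^\mu(\Lambda_l)$ places $v_i v_j v_k$ in a multiple-paired Lagrangian class (cf.\ \cite[Lemma 3.3]{Lassas2018}, \cite[Lemma 4.1]{Wang2019}) whose leading singularity on $\Lambda_{ijk}$ has principal symbol the pointwise product of the three $\sigmp(v_l)(q,\zeta^{(l)})$ in the decomposition $\zeta = \zeta^{(i)}+\zeta^{(j)}+\zeta^{(k)}$. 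Applying $\partial_t^2$ multiplies this by $-(\zeta_0^{(i)}+\zeta_0^{(j)}+\zeta_0^{(k)})^2$ and contributes lower-order singularities only on $\Lambda^{(1)} \cup \Lambda^{(2)}$.

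For the mixed term $v_i A_2^{jk}$, I would substitute the description of $A_2^{jk}$ from the preceding subsection: in $\nxxi \cap \ntxxi$,
\[
A_2^{jk} \in I^{\mu+1,\mu}(\Lambda_{jk}, \Lambda_j) + I^{\mu+1,\mu}(\Lambda_{jk}, \Lambda_k),
\]
plus singularities from the broken-bicharacteristic flow-out of $\Lambda_j \cup \Lambda_k \cup \Lambda_{jk}$. Since $\Lambda_i$ meets $\Lambda_j$, $\Lambda_k$, and $\Lambda_{jk}$ transversally in $\nxxi$, the product $v_i A_2^{jk}$ is again a paired Lagrangian whose pure conormal contribution on $\Lambda_{ijk}$ has principal symbol $\sigmp(v_i)(q,\zeta^{(i)}) \cdot \sigmp(A_2^{jk})(q,\zeta^{(j)}+\zeta^{(k)})$, with residual singularities confined to $\Lambda^{(1)} \cup \Lambda^{(2)}$ and the broken flow-out. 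The outer $\partial_t^2$ again supplies the factor $-(\zeta_0^{(i)}+\zeta_0^{(j)}+\zeta_0^{(k)})^2$.

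Next I would apply $\Qbg$. By the parametrix proposition stated just before Section \ref{subsec_planewaves}, combined with Proposition \ref{pp_wfb} and its corollary, $\Qbg$ sends a Lagrangian distribution to a paired Lagrangian with the Hamiltonian flow-out added, and produces new singularities inside $M$ only along broken bicharacteristics within $\compchar$. Decomposing the source according to whether its wavefront lies in $\Lambda_{ijk}$, in $\Lambda^{(2)}$, or in $\Lambda^{(1)}$ yields the four summands $\tw_0, \tw_1, \tw_2, \tw_3$ with the asserted wavefront conditions. The critical input here is Lemma \ref{lm_intM}, which forces $\Lambda_{ijk} \subset \gsetint$ in $\nxxi \cap \ntxxi$; microlocally near $(q,\zeta)$ we may therefore identify $\Qbg$ with the causal inverse $Q_g$, and since $\Lambda_{ijk}$ meets $\Char(\sq)$ transversally, the conormal piece $\tw_0$ on $\Lambda_{ijk}$ is obtained by dividing the source symbol by $\sigmp(\sq)(q,\zeta) = g^*(\zeta,\zeta)$, giving (\ref{eq_tw0ps}).

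Part (b) follows by propagating $\sigmp(\tw_0)(q,\zeta)$ along the forward null-bicharacteristic from $(q,\zeta)$ to $(y,\eta) \in \LcMpo$ using the transport equation satisfied by the principal symbol of $Q_g$ on the flow-out Lagrangian, and summing the results over $(i,j,k) \in \Sigma(3)$; contributions from $\tw_2, \tw_3$ drop out because $(y,\eta)$ is assumed away from $\Lambda^{(1)}$. Part (c) combines (\ref{eq_LambdaU3}) with the trace FIO $\mathcal{R}$ on $\partial M$: at a timelike $(y_|, \eta_|) \in T^*\partial M$ there is exactly one outward lightlike lift $(y,\eta) \in \LcMpo$, which is the one on the bicharacteristic from $(q,\zeta)$, and differentiation in the normal direction contributes the scalar $\iota \langle \nu, \eta\rangle_g$. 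The main obstacle is the mixed term $v_i A_2^{jk}$: $A_2^{jk}$ is itself a paired Lagrangian with wavefront in $\Lambda_j \cup \Lambda_k \cup \Lambda_{jk}$ together with a broken-bicharacteristic flow-out, so careful bookkeeping is required to verify that the flow-out part of $A_2^{jk}$ contributes only to $\tw_1, \tw_2, \tw_3$ and never to the leading symbol on $\Lambda_{ijk}$, and that every generalized bicharacteristic arising in the calculation is actually broken (so the principal symbol calculus along the flow-out is available) thanks to null-convexity of $\partial M$ and Lemma \ref{lm_gh}.
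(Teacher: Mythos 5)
Your overall route is the same as the paper's: split $A_3^{ijk}$ into the $\beta_2$-cascade term and the $\beta_3$-triple-product term, use the Lassas--Uhlmann--Wang product lemmas to isolate the conormal piece on $\Lambda_{ijk}$ with the residue confined to $\Lambda^{(1)}\cup\Lambda^{(2)}$ and $\epsilon$-neighborhoods, invoke Lemma \ref{lm_intM} to place all intersections in $\intM$ so that $\pi_b(\Lambda_{ijk})\subset\gsetint$, apply Proposition \ref{pp_wfb} together with null-convexity and Lemma \ref{lm_gh} to control the flow-out, and divide by $\sigmp(\sq)$ to get (\ref{eq_tw0ps}). That part of the bookkeeping, including the factor $-(\zeta_0)^2$ from $\partial_t^2$ and the factor $(\zjz+\zkz)^2/|\zj+\zk|^2_{g^*}$ carried by $A_2^{jk}$, matches the paper.

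There is, however, one genuine gap in parts (b) and (c): you propagate the symbol to $(y,\eta)\in\LcMpo$ using only the causal inverse $Q_g$, i.e.\ only the \emph{incident} wave. But $\Qbg$ solves a Dirichlet problem, and near the boundary point $y$ one must write $\Qbg(\cdot)=u^{\mathrm{inc}}+u^{\mathrm{ref}}$ with $u^{\mathrm{inc}}=Q_g(\cdot)$ and $u^{\mathrm{ref}}$ determined by $(u^{\mathrm{inc}}+u^{\mathrm{ref}})|_{\partial M}=0$. The eikonal and boundary conditions force $a^{\mathrm{ref}}|_{\partial M}=-a^{\mathrm{inc}}|_{\partial M}$ while $\partial_\nu\phi^{\mathrm{ref}}=-\partial_\nu\phi^{\mathrm{inc}}$, so the two waves contribute \emph{equally} to $\sigmp(\lge\nu,\nabla\,\Qbg(\cdot)\rge|_{\partial M})$ at the common projection $(y_|,\eta_|)$ (both lightlike lifts $(y,\eta^{\pm})$ of $(y_|,\eta_|)$ carry singularities of the solution, not just the outward one you mention). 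This is exactly the origin of the leading factor $2$ in (\ref{eq_lambdaps}); your argument as written produces the formula without it. For the uniqueness application this is only a universal nonzero constant, but the proposition asserts the specific formula, so the reflection analysis must be included. A secondary, smaller imprecision: the splitting into $\tw_0,\dots,\tw_3$ is not just a sorting of the source by wavefront location; $\tw_1$ arises from applying the decomposition of the paired Lagrangian $Q_g(\cdot)\in \Ical^{p,l}(\Lambda_{ijk},\Lambda^{g}_{ijk})$ into a conormal part on $\Lambda_{ijk}$ plus a remainder microsupported in $\Lambda^{g}_{ijk}(\epsilon)$, which is why its boundary wavefront is described via $(\Lambda^{g}_{ijk}(\epsilon)\cap\Lambda_{ijk})\cup\Lambda^{b}_{ijk}$ rather than via $\Lambda^{(2)}$.
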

\begin{proof}
We write $A_3^{ijk}  = B_3^{ijk} + C_3^{ijk}$, where
\[
B_3^{ijk} = \Qbg(2\beta_2 \partial_t^2(v_iA_2^{jk})), \quad 
C_3^{ijk} = \Qbg(\beta_3 \partial_t^2(v_i v_j v_k))). 
\]
In the following, let $(q, \zeta^{(m)}) \in \Lambda_m, m=i,j,k$ and $(q, \zeta) \in \Lambda_{ijk}$ with $\zeta = \zi + \zj + \zk$. 
The transversal intersection implies the decomposition of $\zeta$ is unique. 

For $B_3^{ijk}$, since we are away from $J^+(\gamma_{x_i, \xi_i}(t_i^b)) $ and $ J^+(\gamma_{x_j, \xi_j}(t_j^b))$,
first one can write
$v_iA_2^{jk} = w_0 + w_1 + w_2$
 using \cite[Lemma 3.6]{Lassas2018}, where
\begin{align}\label{eq_Bijk}
\begin{split}
\ & w_0 \in I^{3\mu + 2}(\Lambda_{ijk}), \quad \wfset(w_2) \subset \Lambda^{(1)} \cup \Lambda^{(2)},\\
\ &  \wfset(w_1) \subset \Lambda^{(1)} \cup (\Lambda^{(1)}(\epsilon) \cap \Lambda_{ijk}).
\end{split}
\end{align}
The leading term $w_0$ has principal symbol 
\begin{align*}
{\sigmp}(w_0) (q, \zeta) = 
-(2\pi)^{-2}
\beta_2
\frac{(\zjz + \zkz)^2}{|\zeta^{(j)} + \zeta^{(k)}|^2_{g^*}} 
\prod_{m= i,j,k}
{\sigmp}(v_m) (q, \zeta^{(m)}).
\end{align*}

Next, we apply $2\beta_2 \partial_t^2$ to $w_0, w_1, w_2$ respectively. The wave front sets remain the same and we have $2\beta_2 \partial_t^2 w_0 \in I^{3\mu + 4}(\Lambda_{ijk})$ with
\begin{align}\label{ps_bpw0}
{\sigmp}(2\beta_2 \partial_t^2 w_0) (q, \zeta) = 
(2\pi)^{-2}(\zeta_0)^2 ( 2\beta^2_2 
\frac{(\zjz + \zkz)^2}{|\zeta^{(j)} + \zeta^{(k)}|^2_{g^*}} )
\prod_{m= i,j,k}
{\sigmp}(v_m) (q, \zeta^{(m)}).
\end{align}
Then, we apply $\Qbg$.  
By Proposition \ref{pp_wfb} and its corollary, the set $\wfset_b(\Qbg(w_k))$ is a union of  $\wfset_b(w_k)$ and its flow out under the broken bicharacteristics.  
For this first term, it follows that
\[
\wfset_b(\Qbg (2\beta_2 \partial_t^2w_0) )\subset\pi_b(\Lambda_{ijk} \cup \Lambda_{ijk}^{b}). 
\]
To find its principal symbol,
we decompose it as
$ \Qbg (2\beta_2 \partial_t^2w_0)  = u^{\text{inc}} + u^{\text{ref}}$,
where 
$u^{\text{inc}} $ is the incident wave before the reflection on the boundary and 
$u^{\text{ref}}$ is the reflected one. 
Recall $\tQ_g$ is the causal inverse of $\sq$ on $\tM$.
Then $ u^{\text{inc}}  = \tQ_g(2\beta_2 \partial_t^2w_0)$ and by \cite[Proposition 2.1]{Greenleaf1993} we have
\begin{align}\label{r_qw0}
u^{\text{inc}} =\tQ_g(2\beta_2 \partial_t^2w_0) \in I^{3 \mu + \frac{5}{2}, -\frac{1}{2}}  ( \Lambda_{ijk} ,\Lambda_{ijk}^{g} ).
\end{align}
The principal symbol at $(q, \zeta) \in \Lambda_{ijk}$ equals to
\begin{align}\label{ps_qw0}
{\sigmp}(u^{\text{inc}})(q, \zeta) = 
|\zeta^{(i)} + \zeta^{(j)} + \zeta^{(k)}|^{-2}_{g^*}
{\sigmp}(2\beta_2 \partial_t^2w_0) (q, \zeta).
\end{align}
If $(y, \eta) \in \LcMpo$ lies along the forward null-bicharacteristic starting from $(q, \zeta) \in \Lambda_{ijk}$, then $(y,\eta)$ belongs to $\Lambda_{ijk}^b$ and is the first point there to hit the boundary.
The principal symbol of $u^{\text{inc}} $ at $(y, \eta)$ is
\begin{align}\label{ps_qw0_b}
{\sigmp}(u^{\text{inc}})(y, \eta)  =  {\sigmp}(\tQ_g)(y, \eta, q, \zeta)   
{\sigmp}(2\beta_2 \partial_t^2w_0)(q, \zeta).
\end{align}
Near $(y, \eta)$, the reflected wave $u^{\text{ref}}$ satisfies
\[
\sq u^{\text{ref}}= 0, \quad   (u^{\text{inc}} +  u^{\text{ref}})|_{\partial M} = 0,
\]
where the second equation comes from the Dirichlet boundary condition.
In a small conic neighborhood of $(y, \eta)$, we write
\[
u^{\bullet} = (2 \pi)^{-3} \int e^{i \phi^\bullet(x, \theta)} a^\bullet (x, \theta) \diff \theta,
\]
for $\bullet = \text{inc, ref}$, with suitable amplitude $a^\bullet (x, \theta)$ satisfying the transport equation and the phase functions $\phi^\bullet(x, \theta)$ satisfying the eikonal equation.
Note that
$ \partial_\nu \phi^\text{ref}|_{\pM} = - \partial_\nu \phi^\text{inc}|_{\pM}
$
near $(y, \eta)$,
since the incident wave is the incoming solution and the reflected wave is the outgoing one.
The Dirichlet boundary condition implies $a^{\text{inc}}|_{\pM} = -a^{\text{ref}}|_{\pM}$ near $(y, \eta)$.
In local coordinates, if we omit the half-density factor, then we have
\[
\sigmp(\lge \nu, \nabla u^{\bullet} \rge|_{\partial M})(\yb, \etab) = \iota(\partial_\nu \phi^{\bullet}) a^{\bullet}(y_|, \eta_|),
\]
where $\bullet  = \text{inc, ref}$ and $(y_|, \eta_|)$ is the projection of the lightlike covector $(y, \eta)$ on the boundary.
It follows that
\[
\sigmp(\lge \nu, \nabla u^{\text{inc}} \rge|_{\partial M})(\yb, \etab)= \sigmp(\lge \nu, \nabla u^{\text{ref}} \rge|_{\partial M})(\yb, \etab),
\]
and therefore
\begin{align}\label{eq_rqw0}
\sigmp(\Qbg(2\beta_2 \partial_t^2w_0))(y,\eta)
& =  2 \sigmp( u^{\text{inc}})(y, \eta),\\
\sigmp(\lge \nu, \nabla( \Qbg(2\beta_2 \partial_t^2w_0)) \rge|_{\partial M})(\yb, \etab)
&= 2\sigmp(\lge \nu, \nabla u^{\text{inc}} \rge|_{\partial M})(\yb, \etab) \nonumber  \\
& = 2 \iota  \sigmp (\mathcal{R})(y_|, \eta_|, y, \eta) \lge \nu, \eta \rge_g \sigmp( u^{\text{inc}})(y, \eta),\nonumber
\end{align}
where the last equality is by \cite[Lemma 4.1]{Wang2019}. 

For $\Qbg(2\beta_2 \partial_t^2 w_1)$, 
first we identify $w_1$ as an element in $\dot{D}'(M)$.  
Its boundary wave front set in $M$ is a subset of $\pi_b(\Lambda^{(1)}) \cup \pi_b(\Lambda^{(1)}(\epsilon) \cap \Lambda_{ijk})$. 
Then $\wfset_b(\Qbg(2\beta_2 \partial_t^2 w_1)) $ is contained in the union of this set and its flow out under broken bicharacteristics, i.e. $\pi_b(\Lambda^{(1)} \cup (\Lambda^{(1)}(\epsilon) \cap \Lambda_{ijk}) \cup \Lambda_{ijk}^{b})$,
away from $\bigcup_{j=1}^3 J^+(\gamma_{x_j, \xi_j}(t_j^b))$. 
When $\ep$ goes to zero, the boundary wave front set tends to $\pi_b(\Lambda^{(1)} \cup \Lambda_{ijk}^{b})$.

For $\Qbg(2\beta_2 \partial_t^2 w_2)$, the boundary wave front set of $w_2$ in $M$ is a subset of $\pi_b(\Lambda^{(1)} \cup \Lambda^{(2)})$. 
Then $\wfset_b(\Qbg(2\beta_2 \partial_t^2w_2)) $ is contained in the union of this set and its flow out under the broken bicharacteristics, i.e., 
$\pi_b(\Lambda^{(1)} \cup \Lambda^{(2)})$, away from $\bigcup_{j=1}^3 J^+(\gamma_{x_j, \xi_j}(t_j^b))$.

Thus, we write $B_3^{ijk} =  \tw_0 + \tw_1 + \tw_2 + \tw_3$ with 
\begin{align*}
\begin{split}
&\tw_0 \in I^{3\mu + 2}(\Lambda_{ijk}), 
\quad   \wfset_b(\tw_1) \subset \pi_b((\Lambda_{ijk}^{g}(\epsilon) \cap \Lambda_{ijk}) \cup \Lambda_{ijk}^{b}),\\
&\wfset_b(\tw_2) \subset \pi_b(\Lambda^{(1)} \cup (\Lambda^{(1)}(\epsilon) \cap \Lambda_{ijk}) \cup \Lambda_{ijk}^{b}),
\quad  \wfset(\tw_3)\subset \Lambda^{(1)} \cup \Lambda^{(2)},
\end{split}
\end{align*}
where the first two terms $\tw_0, \tw_1$ come from applying \cite[Lemma 3.9]{Lassas2018} to (\ref{r_qw0}).
Indeed, \cite[Lemma 3.9]{Lassas2018} implies that we can decompose (\ref{r_qw0}) as a conormal distribution supported in $\Lambda_{ijk}$ and a distribution microlocally supported in $\Lambda_{ijk}^g(\epsilon)$.
Since $u^{\text{inc}}$ in (\ref{r_qw0}) has wave front set contained in $\Lambda_{ijk} \cup \Lambda_{ijk}^g$, the second distribution in the decomposition must be microlocally supported in $(\Lambda_{ijk}^{g}(\epsilon) \cap \Lambda_{ijk}) \cup \Lambda_{ijk}^{g}$. 
Then we include the wave front set after $\Lambda_{ijk}^{g}$ hits the boundary for the first time to have $\wfset_b(\tw_1)$ in (\ref{eq_Aijk}).

For $C_3^{ijk}$, note that it has the same pattern as $B_3^{ijk}$. 
In this case, the leading term in (\ref{eq_Bijk}) is a conormal distribution in $I^{3\mu + 2}(\Lambda_{ijk})$, with principal symbol 
\begin{align}\label{eq_Cijk}
(2\pi)^{-2}
\prod_{m= i,j,k}
{\sigmp}(v_m) (q, \zeta^{(m)})
\end{align}
at $(q, \zeta) \in \Lambda_{ijk}$. 
We apply $\beta_3 \partial_t^2$ to have 
$\beta_3 \partial_t^2 w_0 \in I^{3\mu + 4}(\Lambda_{ijk})$ with
\begin{align}\label{ps_bpw0_C}
{\sigmp}(\beta_3 \partial_t^2 w_0) (q, \zeta) = 
-(2\pi)^{-2} (\zeta_0)^2 \beta_3 
\prod_{m= i,j,k}
{\sigmp}(v_m) (q, \zeta^{(m)}).
\end{align}
Then we apply $\Qbg$ and follow the same analysis as above. 

Combining the analysis of $B_3^{ijk}$ and $C_3^{ijk}$, we conclude that we can write 
$A_3^{ijk} = \tw_0 + \tw_1 + \tw_2 + \tw_3$ with $\tw_j$ satisfying (\ref{eq_Aijk}) for $j = 0,1,2,3$. 
In particular, 
the leading term $\tw_0$ has principal symbol at $(q, \zeta) \in \Lambda_{ijk}$ given by 
\begin{align}\label{eq_tw0ps}
&{\sigmp}(\tw_0)(q, \zeta) \\
= & (2\pi)^{-2}
\frac{(\zeta_0)^2}{|\zeta^{(i)} + \zeta^{(j)} + \zeta^{(k)}|^{2}_{g^*}}
(2\beta^2_2 
\frac{(\zjz + \zkz)^2}{|\zeta^{(j)} + \zeta^{(k)}|^2_{g^*}} - \beta_3)
\prod_{m= i,j,k}{\sigmp}(v_m) (q, \zeta^{(m)}).  \nonumber
\end{align}

Suppose $(y, \eta) \in \LcMpo$
lies along the forward null-bicharacteristic starting from $(q, \zeta) \in \Lambda_{ijk}$ and is 
away from $\Lambda^{(1)} \cup \Lambda^{(2)} \cup \Lambda^{(3)}$.
We combine (\ref{ps_bpw0}), 
(\ref{ps_qw0_b}), (\ref{eq_rqw0}), and (\ref{ps_bpw0_C}) to have 
\begin{align} \label{eq_lambdaps}
&{\sigmp}(\mathcal{U}^{(3)})(y, \eta)
= 2  (2\pi)^{-2}  {\sigmp}(\tQ_g)(y, \eta, q, \zeta) 
(\zeta_0)^2 \mathcal{Q}(\zeta^{(1)}, \zeta^{(2)}, \zeta^{(3)}) 
 \prod_{m=1}^3{\sigmp}(v_m) (q, \zeta^{(m)}), 
\end{align}
where 
\begin{align*}
\mathcal{Q}(\zeta^{(1)}, \zeta^{(2)}, \zeta^{(3)}) 
&= \sum_{(i,j,k) \in \Sigma(3)}
2\frac{(\zjz + \zkz)^2}{|\zeta^{(j)} + \zeta^{(k)}|^2_{g^*}} \beta^2_2  - \beta_3. 
\end{align*}
\end{proof}

\section{The nonlinear interaction of four waves}\label{Sec_fourwaves}
In this section, we consider the interaction of four distorted plane waves. 
Suppose $(x_j, \xi_j)_{j=1}^4$ intersect regularly at $q$ and are casually independent as in (\ref{assump_xj}).
Let $K_j = K(x_j, \xi_j, s_0)$ and $\Lambda_j = \Lambda(x_j, \xi_j, s_0)$ be defined as in Section \ref{subsec_planewaves}. 
With sufficiently small $s_0 >0$, we can assume $K_1, K_2, K_3, K_4$ intersect 4-transversally at $q$, see Definition \ref{df_intersect}.
By Lemma \ref{lm_intM}, in $\nxxi \cap \ntxxi$ we must have $K_{ij}, K_{ijk}$ and $q$ contained in $\intM$,
if $s_0>0$ is small enough.
We consider distorted waves $u_j$ associated with $(x_j,\xi_j)_{j=1}^4$ such that
 \[
u_j \in I^{\mu}(\Lambda(x_j, \xi_j, s_0)), \quad j = 1,2,3,4
\] 
solves the linearized wave problem in $M$, i.e.,  $\sq u_j \in C^\infty(M)$, with the principal symbol nonvanishing along $\gamma_{x_j, \xi_j}(\mathbb{R}_+)$.   
Let $f_j = u_j|_{\partial M}$ and we consider the Dirichlet data $f = \sum_{j = 1}^4 \ep_j f_j$ for the semilinear boundary value problem (\ref{eq_problem}).
Suppose $v_j$ solves (\ref{eq_v1}).
It follows that $v_j$ is equal to $u_j$ module $C^\infty(M)$. 
We define
\begin{align*}
\ufour = \partial_{\epsilon_1}\partial_{\epsilon_2}\partial_{\epsilon_3}\partial_{\epsilon_4} u |_{\epsilon_1 = \epsilon_2 = \epsilon_3 = \epsilon_4=0},
\end{align*}
and combine (\ref{expand_u}), (\ref{eq_A}) to have 
\begin{align}\label{u4}
\ufour = \sum_{(i,j,k,l) \in \Sigma(4)} \mathcal{A}_4^{ijkl} 
=\sum_{(i,j,k,l) \in \Sigma(4)} &
 \Qbg(2 \beta_2 \partial_t^2(v_iA_3^{jkl}) + \beta_2\partial_t^2(A_2^{ij}A_2^{kl}) +\\ 
 &\quad \quad \quad + 3\beta_3\partial_t^2(v_i v_j A_2^{kl}) + \beta_4\partial_t^2(v_iv_jv_kv_l)). \nonumber
\end{align}
Note that $\mathcal{U}^{(4)}$ is not the fourth order linearization of $\Lambda_{F}$ but they are related by 
\begin{align}\label{eq_LambdaU4}
\partial_{\epsilon_1}\partial_{\epsilon_2}\partial_{\epsilon_3}{\epsilon_4} \Lambda_F (f) |_{\epsilon_1 = \epsilon_2 = \epsilon_3=\epsilon_4=0} = \lge \nu, \nabla \mathcal{U}^{(4)} \rge|_{\partial M}.
\end{align}
The following proposition describes the singularities of $\ufour$ and those of the linearized DN map. 
\begin{pp}\label{pp_ufour}
    Suppose $K_1,K_2,K_3, K_4$ intersect 4-transversally at a point $q \in \nxxi$ and $s_0>0$ is sufficiently small. 
    Let  $\unionGamma$ be defined in (\ref{def_Gamma}). 
    Suppose $(y, \eta) \in \LcMpo$ is a covector lying along the forward null-bicharacteristic. 
    If $(y, \eta)$ is in $\nxxi\cap \ntxxi$ and away from $\unionGamma$, 
    then we have   
    \begin{align*}
    &{\sigmp}(\mathcal{U}^{(4)})(y, \eta) 
    = 2 (2\pi)^{-3} {\sigmp}({Q}_g)(y, \eta, q, \zeta)  (\zeta_0)^2  \mathcal{C}(\zeta^{(1)}, \zeta^{(2)}, \zeta^{(3)}, \zeta^{(4)}) 
    (\prod_{j=1}^4 {\sigmp}(v_j) (q, \zeta^{(j)})), \nonumber
    \end{align*}          
    where  $\mathcal{C} = \mathcal{C}(\zeta^{(1)}, \zeta^{(2)}, \zeta^{(3)}, \zeta^{(4)})$ is defined in (\ref{eq_C}). 
    Let $(y_|, \eta_|)$ be the projection of $(y, \eta)$ on the boundary.
    Moreover, we have 
    \[
    {\sigmp}(\epslam)(y_|, \eta_|) = 
    \iota  \lge \nu, \eta \rge_g
    \sigmp(\mathcal{R})(y_|, \eta_|, y, \eta){\sigmp}(\mathcal{U}^{(4)})(y, \eta). 
    \]
\end{pp}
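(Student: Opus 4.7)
The plan is to follow the same template as Proposition \ref{pp_Aijk}, expanding $\mathcal{U}^{(4)}$ via (\ref{u4}) into its four constituent contributions, computing the leading conormal symbol of each at the top stratum $\Lambda_q = T_q^*M \setminus 0$, propagating through $\Qbg$, and finally reading off the trace on $\partial M$. Throughout, the hypothesis that $(y,\eta) \notin \unionGamma$ permits us to discard every wavefront component that only lives in $\Lambda^{(1)} \cup \Lambda^{(2)} \cup \Lambda^{(3)} \cup \Lambda^{(3),b}$, so only the genuinely quartic interaction will contribute at $(y,\eta)$.

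First, I would analyze each of the four types of summands in $\mathcal{A}_4^{ijkl}$ separately. The purely quartic product $\beta_4 \partial_t^2(v_iv_jv_kv_l)$ is, by iterated application of \cite[Lemma 3.3]{Lassas2018}, a conormal distribution on $\Lambda_q$ modulo lower strata, with principal symbol at $(q,\zeta)$ (with unique decomposition $\zeta = \zi+\zj+\zk+\zl$ guaranteed by 4-transversality) proportional to $-\beta_4 (\zeta_0)^2 \prod_{m} \sigmp(v_m)(q, \zeta^{(m)})$. For $3\beta_3 \partial_t^2(v_iv_j A_2^{kl})$, I insert the symbol of $A_2^{kl}$ computed in Section \ref{Sec_threewaves}, picking up the factor
\[
-\beta_2 \frac{(\zkz+\zlz)^2}{|\zk+\zl|^2_{g^*}}\,\sigmp(v_k)\sigmp(v_l).
\]
For $\beta_2 \partial_t^2(A_2^{ij}A_2^{kl})$, multiplying the two $A_2$-symbols produces the factor
\[
\beta_2^2\,\frac{(\ziz+\zjz)^2}{|\zi+\zj|^2_{g^*}}\cdot\frac{(\zkz+\zlz)^2}{|\zk+\zl|^2_{g^*}}\prod_m \sigmp(v_m).
\]
For $2\beta_2 \partial_t^2(v_i A_3^{jkl})$, the leading piece of $A_3^{jkl}$ is exactly $\tw_0 \in I^{3\mu+2}(\Lambda_{jkl})$ from Proposition \ref{pp_Aijk}, whose symbol (\ref{eq_tw0ps}) I plug in directly; the remainder terms $\tw_1,\tw_2,\tw_3$ have wavefront inside $\unionGamma$ and can be dropped.

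Second, I would apply $\Qbg$ to the resulting conormal distribution on $\Lambda_q$. By \cite[Proposition 2.1]{Greenleaf1993} together with the paired Lagrangian calculus recalled in the causal-inverse subsection, the image is a paired Lagrangian distribution on $(\Lambda_q, \Lambda_q^g)$, and along the open flowout $\Lambda_q^g \setminus \Lambda_q$ its principal symbol at $(y,\eta)$ equals $\sigmp(\tQ_g)(y,\eta,q,\zeta)$ times the symbol of the source at $(q,\zeta)$. Any new wavefront set produced by broken bicharacteristics after the first boundary reflection lies in $\Lambda_q^{g,b}$, and the hypothesis $(y,\eta) \notin \unionGamma$ puts $(y,\eta)$ on the first arc of the flowout, ensuring that all other singularities from Proposition \ref{pp_wfb} (together with Lemma \ref{lm_intM} and the null-convexity corollary) are harmless. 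Summing the four symbol contributions and dividing by $|\zi+\zj+\zk+\zl|^2_{g^*}$ (from the inversion factor when reading the principal symbol of $\Qbg$ times $\partial_t^2$) consolidates the four pieces into the single coefficient $\mathcal{C}(\zi,\zj,\zk,\zl)$ of (\ref{eq_C}), after summation over $(i,j,k,l) \in \Sigma(4)$.

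Third, for the statement about the DN map, I would repeat the incident/reflected decomposition from the proof of Proposition \ref{pp_Aijk}. Writing $\mathcal{U}^{(4)} = u^{\text{inc}} + u^{\text{ref}}$ near the first boundary hit and using $u^{\text{inc}}|_{\partial M} = -u^{\text{ref}}|_{\partial M}$, the normal-derivative symbols add, so that
\[
\sigmp(\lge \nu, \nabla \mathcal{U}^{(4)} \rge|_{\partial M})(y_|, \eta_|) = 2\iota\, \sigmp(\mathcal{R})(y_|,\eta_|,y,\eta) \lge \nu, \eta \rge_g \sigmp(\mathcal{U}^{(4)})(y, \eta),
\]
which together with (\ref{eq_LambdaU4}) gives the claimed formula for $\sigmp(\epslam)$.

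The main obstacle will be the bookkeeping of the symbol combination that assembles $\mathcal{C}$ from the four building blocks while verifying that every discarded piece genuinely sits inside $\unionGamma$; in particular, I must check that the paired Lagrangian $(\Lambda_{ijkl}^g \cap \Lambda_{ijkl})$-type remainders produced when $\Qbg$ acts on the intermediate terms $A_2^{ij}A_2^{kl}$ and $v_iA_3^{jkl}$ — which contain composite flowouts from $\Lambda^{(2)}$ and $\Lambda^{(3)}$ — do not escape $\unionGamma$ after the first boundary reflection, which is exactly what the definition (\ref{def_Gamma}) was designed to absorb via $\Lambda^{(3),b}$ and Lemma \ref{lm_intM}.
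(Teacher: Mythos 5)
Your proposal follows essentially the same route as the paper's proof: the same four-way decomposition of $\mathcal{U}^{(4)}$ from (\ref{u4}), the same leading-symbol computations on $\Lambda_q$ feeding the coefficients $\mathcal{C}_1,\dots,\mathcal{C}_4$ that assemble into (\ref{eq_C}), the same use of Proposition \ref{pp_wfb}, Lemma \ref{lm_intM} and the flowout calculus to confine every remainder to $\unionGamma$, and the same incident/reflected decomposition yielding the factor $2$ and the trace formula for the DN map. One normalization slip to fix in the write-up: at $(y,\eta)\in\Lambda_q^{g}\setminus\Lambda_q$ the only factor contributed by $\Qbg$ is $\sigmp(\tQ_g)(y,\eta,q,\zeta)$, exactly as you state earlier in the same paragraph; the additional division by $|\zeta^{(1)}+\zeta^{(2)}+\zeta^{(3)}+\zeta^{(4)}|^2_{g^*}$ that you invoke when consolidating the four pieces into $\mathcal{C}$ pertains only to the evaluation on $\Lambda_q$ itself away from $\Char(\sq)$ (as in (\ref{ps_qw0})), and is meaningless here since $\zeta$ is lightlike so that quantity vanishes --- the normalization in (\ref{eq_C}) extracts the factor $(\zeta_0)^2$, not $|\zeta|^{-2}_{g^*}$.
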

\begin{proof}
 There are four different of terms for $\ufour$, see (\ref{u4}). 
 We denote them by $\mathcal{U}^{(4)}_j$, for $j = 1,\ldots, 4$, 
 and analyze each of them.
 Then we compute the principal symbol of $\lge \nu, \nabla \mathcal{U}_j^{(4)} \rge|_{\partial M}$ in $\nxxi \cap \ntxxi$, following the same arguments in \cite[Section 5]{Uhlmann2021a}. 
 We write down these arguments below for completeness. 
 
 In the following, let $(q, \zeta^{(m)}) \in \Lambda_m$ with $m=i,j,k,l$ and $(q, \zeta) \in \Lambda_{q}$ with $\zeta = \zi + \zj + \zk + \zl$. 
 The transversal intersection implies the decomposition of $\zeta$ is unique. 
 
First, we consider 
\[
\mathcal{U}^{(4)}_1 = \sum_{(i,j,k,l) \in \Sigma(4)}  \Qbg(2 \beta_2 \partial_t^2(v_iA_3^{jkl})). 
\]
With the decomposition of $A_3^{ijk} = w_0 + w_1 + w_2 + w_3$ given in (\ref{eq_Aijk}), 
we analyze the singularities of $2 \beta_2 \partial_t^2(v_i w_k)$ for $0 \leq k \leq 3$ in the following and then apply the solution operator $\Qbg$ to each of them. 
Here we replace the notation $\tw_j$ by $w_j$ in the decomposition. 

    Since $w_0 \in I^{3\mu + 2}(\Lambda_{ijk})$, 
    by \cite[Lemma 3.10]{Lassas2018} and \cite[Lemma 4.1]{Wang2019},  
    one can write $2 \beta_2 \partial_t^2(v_iw_0) = w_{0,1} + w_{0,2}$,
    where 
    \[
    w_{0,1} \in I^{4 \mu +5}(\Lambda_q),
    \quad  
    \wfset(w_{0,2}) \subset
    \Lambda_l(\epsilon) \cup \Lambda_{ijk} (\ep).
    \]
    The leading term $w_{0,1}$ has the principal symbol 
    \begin{align*}
    {\sigmp}(w_{0,1})(q, \zeta)
     =(2\pi)^{-3} \mathcal{C}_1(\zeta^{(i)}, \zeta^{(j)}, \zeta^{(k)}, \zeta^{(l)})(\prod_{j=1}^4 {\sigmp}(v_j) (q, \zeta^{(j)})),
    \end{align*}
    at $(q, \zeta) \in \Lambda_q$, where by (\ref{eq_tw0ps}) we have
    \begin{align}\label{C1}
    \mathcal{C}_1(\zeta^{(i)}, \zeta^{(j)}, \zeta^{(k)}, \zeta^{(l)}) 
    = -\frac{2\beta_2(\zeta_0)^2(\ziz + \zjz + \zkz)^2}{|\zi + \zj + \zk|^2_{g^*}} 
    (2\frac{(\zjz + \zkz)^2}{|\zeta^{(j)} + \zeta^{(k)}|^2_{g^*}} \beta^2_2  - \beta_3). 
    \end{align}
    Then we apply $\Qbg$ to $w_{0,1}$ and Proposition \ref{pp_wfb} with its corollary implies 
    \begin{align*}
    &\wfset_b(\Qbg(w_{0,1}))
    \subset \pi_b(\Lambda_q \cup \Lambda_q^{b}) \subset \gsetint \cup \mathcal{H}.  
    \end{align*}
    To find the principal symbol,
    we decompose it as
    $ \Qbg(w_{0,1}) = u^{\text{inc}} + u^{\text{ref}}$ as before,
    where 
    $u^{\text{inc}} $ is the incident wave before the reflection and $u
    ^{\text{ref}}$ is the reflected one. 
    For more details, see the proof of Proposition \ref{pp_Aijk}.  
    Similarly, we have 
    \[
    w^{\text{inc}} \in I^{4 \mu + \frac{7}{2}, -\frac{1}{2}}  ( \Lambda_q ,\Lambda_q^{g} ).
    \]
    If $(y, \eta) \in \partial T^*{ M}$ lies along the forward null-bicharacteristic starting at $(q, \zeta)$, then it belongs to $\Lambda_q^b$ and is the first point where $\Lambda_q^b$ touches the boundary.
    The same argument shows that
        \begin{align*}
    &{\sigmp}(\Qbg(w_{0,1})) (y, \eta) 
    = 
     2 (2\pi)^{-3}    {\sigmp}(\tQ_g)(y, \eta, q, \zeta)   
    \mathcal{C}_1(\zeta^{(i)}, \zeta^{(j)}, \zeta^{(k)}, \zeta^{(l)}) 
    \prod_{j=1}^4 {\sigmp}(v_j) (q, \zeta^{(j)}). \nonumber
    \end{align*}
    when $(y, \eta)$ is away from $\unionGamma$. 
    
    For $\Qbg(w_{0,2})$,  
    away from $\bigcup_{j=1}^4 J^+(\gamma_{x_j, \xi_j}(t_j^b))$, 
    the flow out of $\Lambda_l(\epsilon)$ 
    under the broken bicharacteristic arcs is a neighborhood of $\Lambda_l$, 
    and
    it tends to $\Lambda_l$ 
    when $\ep $ goes to zero.
    Similarly, the flow out of $\Lambda_{ijk}(\ep)$ under the broken bicharacteristic arcs tends to $\Lambda_{ijk}^b$ as $\ep $ goes to $ 0$. 
    
    For $\Qbg(2 \beta_2 \partial_t^2(v_iw_1))$, 
    we use \cite[Lemma 6]{Uhlmann2021a} to show 
    its boundary wave front set tends to a subset of 
    $\pi_b(\Lambda_i \cup \Lambda_{jkl} \cup \Lambda_{jkl}^{b})$, 
    as $\ep$ goes to zero.    
    Here it suffices for us to verify the intersection of $\wfset(v_i)$ and $\wfset(w_1)$ is in $T^*\intM$, by Lemma \ref{lm_intM}. 
    For $\Qbg(2 \beta_2 \partial_t^2(v_iw_2))$, the same argument in \cite[Lemma 6]{Uhlmann2021a} with Lemma \ref{lm_intM} works and its boundary wave front set tends to a subset of
       $\pi_b(\Lambda^{(1)} \cup \Lambda_{jkl} \cup \Lambda_{jkl}^{b})$, as $\epsilon$ goes to zero.     
    For $\Qbg(2 \beta_2 \partial_t^2(v_iw_3))$, 
    we use the same argument as in \cite[Propostion 3.11]{Lassas2018} to conclude 
    its boundary wave front set tends to a subset of 
    $\pi_b(\Lambda^{(1)} \cup \Lambda_{jkl} \cup \Lambda_{jkl}^{b})$ as $\ep$ goes to zero.

     We emphasize that all the analysis above happens in the set $\nxxi \cap \ntxxi$ with sufficiently small $s_0>0$. 
     To summarize, the distribution $\Qbg(2 \beta_2 \partial_t^2(v_i(w_1 + w_2 + w_3)))$ does not have new singularities other than those in 
     \[
     \pi_b(\Lambda^{(1)} \cup \Lambda^{(2)} \cup \Lambda^{(3)} \cup \Lambda^{(3),b}). 
     \]
    Therefore, away from $\unionGamma$, one has 
    \begin{align}\label{eq_psrua}
    &{\sigmp}(\mathcal{U}_1^{(4)} ) (y, \eta) \\
    =&\sum_{(i,j,k,l) \in \Sigma(4)}
      2 (2\pi)^{-3}  
    {\sigmp}(\tQ_g)(y, \eta, q, \zeta)   
    \mathcal{C}_1(\zeta^{(i)}, \zeta^{(j)}, \zeta^{(k)}, \zeta^{(l)}) 
    \prod_{j=1}^4 {\sigmp}(v_j) (q, \zeta^{(j)}). \nonumber
    \end{align}

    Next, 
    recall $\mathcal{U}^{(4)}_2 = \Qbg( \beta_2\partial_t^2(A_2^{ij}A_2^{kl}))$. 
    Away from $\bigcup_{j=1}^4 J^+(\gamma_{x_j, \xi_j}(t_j^b))$, we have 
    \[
     A_2^{ij} \in I^{\mu+1, \mu}(\Lambda_{ij}, \Lambda_i) + I^{\mu+1, \mu}(\Lambda_{ij}, \Lambda_j).
    \]
    The same analysis as in \cite{Kurylev2018, Lassas2018, Wang2019} applies to $\beta_2\partial_t^2(A_2^{ij}A_2^{kl})$. 
    Indeed, by \cite[Lemma 3.8]{Lassas2018}, 
    one has
    \[
    \beta_2\partial_t^2(A_2^{ij}A_2^{kl}) = w_0 + w_1 + w_2,
    \]
    where 
    \begin{align*}
    w_0 \in I^{4 \mu + 5}(\Lambda_q) , 
    \quad \wfset(w_1) \subset
    \Lambda^{(1)} \cup \Lambda^{(2)} \cup \Lambda^{(3)},
    \quad \wfset(w_2) \subset \Lambda^{(1)}(\epsilon).
    \end{align*}
    The principal symbol of $w_0$ is 
    \[
    {\sigmp}(w_0)(q, \zeta) = (2\pi)^{-3}  \mathcal{C}_2(\zeta^{(i)}, \zeta^{(j)}, \zeta^{(k)}, \zeta^{(l)})
    \prod_{j=1}^4 {\sigmp}(v_j) (q, \zeta^{(j)}),
    \]
    with
    \begin{align}\label{C2}
    \mathcal{C}_2(\zeta^{(i)}, \zeta^{(j)}, \zeta^{(k)}, \zeta^{(l)})
    &=-\beta_2^3  (\zeta_0)^2 \frac{(\ziz+\zjz)^2}{| \zi + \zj|^2_{g^*}}\frac{(\zkz+\zlz)^2}{| \zk + \zl|^2_{g^*}}.
    \end{align}
    The same argument as before show that 
    \begin{align*}
    &\wfset_b(\Qbg(w_{0}))
    \subset \pi_b(\Lambda_q \cup \Lambda_q^{b}), 
    \quad \wfset_b(\Qbg(w_{1}))
    \subset \pi_b(\Lambda^{(1)} \cup \Lambda^{(2)} \cup  \Lambda^{(3)} \cup \Lambda^{(3),b}), 
    \end{align*}
    and $\Qbg(w_{2}) $ tends to a subset of 
    $\pi_b(\Lambda^{(1)})$
    when $\ep$ goes to zero. 
    To find the principal symbol of the leading term $\Qbg(w_{0})$, we decompose it as
    $ Q_g(w_{0}) = u^{\text{inc}} + u^{\text{ref}}$ as before.
    Then the same argument shows that 
    if $(y, \eta) \in \partial T^*{ M}$ is away from $\unionGamma$ and lies along the forward null-bicharacteristic starting from $(q, \zeta)$, then we have 
    \begin{align}\label{eq_psrub}
&{\sigmp}(\mathcal{U}_2^{(4)} ) (y, \eta) \\
=& \sum_{(i,j,k,l) \in \Sigma(4)}
2 (2\pi)^{-3}  
{\sigmp}(\tQ_g)(y, \eta, q, \zeta)   
\mathcal{C}_2(\zeta^{(i)}, \zeta^{(j)}, \zeta^{(k)}, \zeta^{(l)}) 
\prod_{j=1}^4 {\sigmp}(v_j) (q, \zeta^{(j)}).\nonumber
    \end{align}
    
    Third, 
    recall $\mathcal{U}^{(4)}_3 = \Qbg(3 \beta_3 \partial_t^2(v_iv_j A_2^{kl}))$.
    By the proof of \cite[Proposition 3.11]{Lassas2018}, see also \cite[Lemma 3.6, Lemma 3.10]{Lassas2018}, 
    one has
    \[
    3 \beta_3 \partial_t^2(v_iv_j A_2^{kl})= w_0 + w_1 + w_2,
    \]
    where 
    \begin{align*}
    &w_0 \in I^{4 \mu + 5}(\Lambda_q) , \quad \wfset(w_1) \subset
    \Lambda^{(1)} \cup \Lambda^{(2)} \cup \Lambda^{(3)}, \quad 
    \wfset(w_2) \subset \Lambda^{(1)}(\ep) \cup \Lambda^{(3)}(\ep). 
    \end{align*}
    The principal symbol for the leading term $w_0$ is 
    \[
    {\sigmp}(w_0)(q, \zeta) = (2\pi)^{-3}  \mathcal{C}_3(\zeta^{(i)}, \zeta^{(j)}, \zeta^{(k)}, \zeta^{(l)})
    \prod_{j=1}^4 {\sigmp}(v_j) (q, \zeta^{(j)}),
    \]
    with
    \begin{align}\label{C3}
    \mathcal{C}_3(\zeta^{(i)}, \zeta^{(j)}, \zeta^{(k)}, \zeta^{(l)}) 
    &=3 \beta_3 \beta_2 (\zeta_0)^2 \frac{(\zkz+\zlz)^2}{| \zk + \zl|^2_{g^*}}.
    \end{align}   
    The same argument shows that 
    if $(y, \eta) \in \partial T^*M$ is away from $\unionGamma$ and lies along the forward null-bicharacteristic starting from $(q, \zeta)$, then we have 
    \begin{align}\label{eq_psruc}
    &{\sigmp}(\mathcal{U}_3^{(4)} ) (y, \eta) \\
    =& \sum_{(i,j,k,l) \in \Sigma(4)}
    2 (2\pi)^{-3}  
    {\sigmp}(\tQ_g)(y, \eta, q, \zeta)   
    \mathcal{C}_3(\zeta^{(i)}, \zeta^{(j)}, \zeta^{(k)}, \zeta^{(l)}) 
    \prod_{j=1}^4 {\sigmp}(v_j) (q, \zeta^{(j)}).\nonumber
    \end{align}
    where $(y_|, \eta_|) $ is the projection of $(y,\eta)$. 
    
    Fourth, 
    recall $\mathcal{U}^{(4)}_4 = \Qbg(\beta_4\partial_t^2(v_iv_jv_kv_l))$.
    By \cite[Lemma 3.8]{Lassas2018}, 
    we write
    \[
    \beta_4\partial_t^2(v_iv_jv_kv_l)
    = w_0 + w_1 + w_2,
    \]
    where
    \begin{align*}
        w_0 \in I^{4 \mu + 5}(\Lambda_q) , 
        \quad \wfset(w_1) \subset
        \Lambda^{(1)} \cup \Lambda^{(2)} \cup \Lambda^{(3)},
        \quad \wfset(w_2) \subset \Lambda^{(1)}(\epsilon).
    \end{align*}
    In this case, the principal symbol for the leading term $w_0$ is 
    \[
    {\sigmp}(w_0)(q, \zeta) = (2\pi)^{-3}  \mathcal{C}_4(\zeta^{(i)}, \zeta^{(j)}, \zeta^{(k)}, \zeta^{(l)}) 
    \prod_{j=1}^4 {\sigmp}(v_j) (q, \zeta^{(j)}).
    \]
    with 
    \begin{align}\label{C4}
    \mathcal{C}_4(\zeta^{(i)}, \zeta^{(j)}, \zeta^{(k)}, \zeta^{(l)}) 
    &= -\beta_4 (\zeta_0)^2.
    \end{align}
    The same argument shows that 
    if $(y, \eta) \in \partial T^*M$ is away from $\unionGamma$ and lies along the forward null-bicharacteristic starting from $(q, \zeta)$, then we have 
    \begin{align}\label{eq_psrud}
    &{\sigmp}(\mathcal{U}_4^{(4)} ) (y, \eta) \\
    = &\sum_{(i,j,k,l) \in \Sigma(4)}
    2 (2\pi)^{-3}  
    {\sigmp}(\tQ_g)(y, \eta, q, \zeta)   
    \mathcal{C}_4(\zeta^{(i)}, \zeta^{(j)}, \zeta^{(k)}, \zeta^{(l)}) 
    \prod_{j=1}^4 {\sigmp}(v_j) (q, \zeta^{(j)}).\nonumber
    \end{align}
    where $(y_|, \eta_|) $ is the projection of $(y,\eta)$. 
   
    Thus,  
    from the analysis above, we combine (\ref{eq_psrua}), (\ref{eq_psrub}), (\ref{eq_psruc}), (\ref{eq_psrud})  to have 
            \begin{align*}
            &{\sigmp}(\mathcal{U}^{(4)})(y, \eta) \\
            = &2 (2\pi)^{-3}   
            {\sigmp}({Q}_g)(y, \eta, q, \zeta) 
            (\zeta_0)^2  \mathcal{C}(\zeta^{(1)}, \zeta^{(2)}, \zeta^{(3)}, \zeta^{(4)}) 
             (\prod_{j=1}^4 {\sigmp}(v_j) (q, \zeta^{(j)})) \nonumber,
            \end{align*}
            where
            \begin{align}\label{eq_C}
            &\mathcal{C}(\zeta^{(1)}, \zeta^{(2)}, \zeta^{(3)}, \zeta^{(4)}) 
             = \sum_{(i,j,k,l) \in \Sigma(4)} (\zeta_0)^{-2} 
            (\mathcal{C}_1+ \mathcal{C}_2 + \mathcal{C}_3 + \mathcal{C}_4)(\zeta^{(i)}, \zeta^{(j)}, \zeta^{(k)}, \zeta^{(l)})  \\
            = & \sum_{(i,j,k,l) \in \Sigma(4)} 
            -(4\frac{(\ziz + \zjz + \zkz)^2}{|\zi + \zj + \zk|^2_{g^*}} + \frac{(\ziz+\zlz)^2}{| \zi + \zl|^2_{g^*}})
            \frac{(\zjz + \zkz)^2}{|\zeta^{(j)} + \zeta^{(k)}|^2_{g^*}}  \beta_2^3 + \nonumber \\
            & \quad \quad \quad 
            + (3 \frac{(\zkz+\zlz)^2}{| \zk + \zl|^2_{g^*}} + 2\frac{(\ziz + \zjz + \zkz)^2}{|\zi + \zj + \zk|^2_{g^*}}) \beta_2 \beta_3
            -\beta_4  \nonumber  
            \end{align}
            with $\mathcal{C}_j$ defined in (\ref{C1}), (\ref{C2}), (\ref{C3}), (\ref{C4}) for $m = 1,2,3,4$.
      By (\ref{eq_LambdaU4}), we have 
      \[
      {\sigmp}(\epslam)(y_|, \eta_|) = \iota  \lge \nu, \eta \rge_g
      \sigmp(\mathcal{R})(y_|, \eta_|, y, \eta){\sigmp}(\mathcal{U}^{(4)})(y, \eta).
      \]     
            
\end{proof}
\section{The recovery of lower order nonlinear terms}\label{sec_lower}
For $k=1,2$, let $p^{(k)}$ solve the boundary value problem (\ref{eq_problem}) with nonlinear terms $F^{(k)}$ that have the expansion in (\ref{eq_nlterm}), i.e.,
\[
\Fk(x,\pk) = \sum_{m=1}^{+\infty} \betak_{m+1}(x) \partial_t^2 (\pk), \quad k = 1, 2,
\]
and satisfy the assumption (\ref{assum_F}).
We suppose
\[
\lambdaFone(f) = \lambdaFtwo(f),
\]
for small boundary data $f$ supported in $(0, T) \times \partial \Omega$.
In this section, we consider the recovery of $\betak_{2}, \betak_{3}, \betak_{4}$ at points
in the suitable larger set
\[
\Wset.
\]
from the fourth order linearization of the DN map.
For convenience, we denote them by lower order nonlinear terms.

Let $q \in \mathbb{W}$ be fixed.
We denote by 
\[
N^\pm(\zeta^o, \varsigma) = \{\zeta \in L_q^{*,\pm}M: \|\zeta-\zeta^o\| < \varsigma\}
\]
a conic neighborhood of a fixed covector $\zeta^o \in L_q^{*,\pm}M$ with small parameter $\varsigma>0$. 
Similarly, we denote the $\varsigma$-neighborhood for a fixed lightlike vector $w \in L^\pm_qM$ by $N^\pm(w, \varsigma)$. 
First, we show that with ${\zh}^{(1)} \in L^*_q M$ given, 
one can perturb ${\zh}^{(1)}$ a little bit to choose lightlike vectors $\zh^{(2)}, \zh^{(3)}, \zh^{(4)}$ such that $\zh^{(j)}, j=1,2,3,4$ are linearly independent and are corresponding to null geodesic segments without cut points.
For convenience, we form the lemma below based on the proof of  \cite[Lemma 3.5]{Kurylev2018}.
\begin{lm}\label{lm_perturb_zeta1}
    Let $q \in \mathbb{W}$ and $\zh^{(1)} \in L^{*,+}_q M$. 
    Suppose there is $(x_1, \xi_1) \in L^{+} V$ with $(q, \zh^{(1)}) = (\gamma_{x_1, \xi_1}(s_1), (\dot{\gamma}_{x_1, \xi_1}(s_1))^b)$ and $0 < s_1 < \rho(x_1, \xi_1)$.
    Then we can find  $\varsigma >0$ such that for any $\zh^{(2)} \in N^+(\zh^{(1)}, \varsigma)$, there exists a vector
    $(x_2, \xi_2) \in L^{+} V$ with $(q, \zh^{(2)}) = (\gamma_{x_2, \xi_2}(s_2), (\dot{\gamma}_{x_2, \xi_2}(s_2))^b)$ and $0 < s_2 < \rho(x_2, \xi_2)$.
    Moreover, one has $(x_1, \xi_1)$ and $(x_2, \xi_2)$ are causally independent.
\end{lm}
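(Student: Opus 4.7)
The plan is to construct $(x_2, \xi_2)$ by following the past-directed null geodesic issuing from $q$ with the perturbed initial cotangent, and then invoking openness of $V$, smooth dependence of geodesics on initial data, and the lower semi-continuity of $\rho$. Causal independence will be arranged by a parameter adjustment so that $x_1$ and $x_2$ sit on the same constant-$t$ slice.

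Concretely, let $\alpha(s) = \gamma_{q,-(\zh^{(1)})^\#}(s)$ be the past-directed null geodesic from $q$ with initial tangent $-(\zh^{(1)})^\#$. By hypothesis there is $s_1 > 0$ with $\alpha(s_1) = x_1$ and $-\dot{\alpha}(s_1)$ a positive multiple of $\xi_1$. For $\zh^{(2)} \in N^+(\zh^{(1)},\varsigma)$, let $\alpha'(s) = \gamma_{q,-(\zh^{(2)})^\#}(s)$. Smooth dependence of the geodesic flow on initial data gives $\alpha' \to \alpha$ in $C^1$ uniformly on $[0,s_1+1]$ as $\zh^{(2)} \to \zh^{(1)}$. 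Since $V$ is open and contains $x_1$, the point $\alpha'(s_1)$ lies in $V$ for all $\varsigma$ small enough.

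Provisionally setting $s_2 = s_1$, $x_2 = \alpha'(s_2)$, and $\xi_2 = -\dot{\alpha}'(s_2)$, orientation reversal gives $\gamma_{x_2,\xi_2}(s_2) = q$ with $(\dot{\gamma}_{x_2,\xi_2}(s_2))^b = \zh^{(2)}$, so $(x_2,\xi_2) \in L^+V$ realizes $\zh^{(2)}$. The cut-locus inequality $s_2 < \rho(x_2, \xi_2)$ then follows from $s_1 < \rho(x_1, \xi_1)$ and the lower semi-continuity of $\rho$ on a globally hyperbolic Lorentzian manifold, used via \cite{Beem2017}, after shrinking $\varsigma$.

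To secure causal independence, I would instead choose $s_2$ via the implicit function theorem so that $t(\alpha'(s_2)) = t(x_1)$. This is solvable because $\alpha$ is transverse to the constant-$t$ slice through $x_1$---its tangent is null while that slice is spacelike in $\tM = \mathbb{R} \times \tO$---and the resulting $s_2$ depends continuously on $\zh^{(2)}$ and is close to $s_1$. Since $x_1$ and $x_2$ then lie on the same constant-$t$ slice and are distinct, they are spacelike separated in $\tM$, so neither belongs to the causal future of the other, giving (\ref{assump_xj}). The main obstacle is coordinating these three requirements under a single uniform $\varsigma$: the first two follow from open and semi-continuous properties of the geodesic flow, while the third requires the transversality-based parameter correction, and the strict inequality $s_1 < \rho(x_1,\xi_1)$ in the hypothesis is essential for absorbing the shift from $s_1$ to $s_2$.
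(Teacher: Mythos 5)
Your proposal is correct and follows essentially the same route as the paper's proof: perturb the past-directed null geodesic from $q$, use smooth dependence and openness of $V$ to land $x_2$ in $V$, invoke lower semi-continuity of the cut locus function from \cite{Beem2017}, and adjust the parameter so that $x_2$ lies on the Cauchy surface $\{\textbf{t} = \textbf{t}(x_1)\}$, which yields causal independence. The only cosmetic differences are that the paper applies semi-continuity to $\rho(q,-w)$ rather than to $\rho(x_2,\xi_2)$, and realizes the time-slice correction via the exponential map being a local diffeomorphism rather than via the implicit function theorem; both are equivalent.
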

\begin{proof}
    First set $w_1 = (\zh^{(1)})^\#$.
    Note that the geodesic $\gamma_{q, -w_1}(s) = \gamma_{x_1, \xi_1}(s_1-s)$ have no cut points for $s \in [0, s_1]$, which implies $s_1 <  \rho(q, -w_1)$.

    We consider all the null geodesics $\gamma_{q, -w}(s)$ that emanate from $q$ in the lightlike direction of $w$, where $w \in N^+(w_1, \varsigma)$ with $\varsigma$ to be specified later.
    By \cite[Theorem 9.33]{Beem2017}, the cut locus function $\rho(q, -w_1)$ is lower semi-continuous on a globally hyperbolic Lorentzian manifold.
    It follows that there is $\varsigma >0 $ such that $s_1 < \rho(q, -w)$ for any $w \in N^+(w_1, \varsigma)$.
    Next, to find $(x_2, \xi_2) \in L^+V$ for such $(q, w)$,
    let $\gamma_{q, -w}(s_x)$ be the intersection points of $\gamma_{q, -w}$ with the Cauchy surface $\{x \in M: \textbf{t}(x) = \textbf{t}(x_1)\}$, where $\textbf{t}$ is the time function on $(M, g)$.
    Indeed, the first cut point comes at or before the first conjugate point along $\gamma_{q, -w_1}(s)$.
    This implies the exponential map is a local diffeomorphism near $-s_1 w_1 \in L^-_q M$, which maps a small neighborhood of $-s_1 w_1$ to a small neighborhood of $x_1$.
    For $w$ close enough to $w_1$, there exist $s_x < s_1 + \epsilon < \rho(q, -w)$ and $x_2 = \gamma_{q,-w}(s_x) \in V$ with $\textbf{t}(x_2) = \textbf{t}(x_1)$,
    for sufficiently small $\ep>0$. 
    Here $\textbf{t}$ is the time function. 
    This proves the statement.
\end{proof}

Now we claim that for any fixed $q \in \mathbb{W}$ and sufficiently small $s_0>0$ one can find \[
(x_j, \xi_j)_{j=1}^4 \subset L^+V, \quad \zeta \in \Lambda_{q}\setminus (\Lambda^{(1)} \cup \Lambda^{(2)} \cup \Lambda^{(3)}), \quad (y, \eta) \in \LcMpo
\]
such that
\begin{itemize}
    \item[(a)] $(x_j, \xi_j)_{j=1}^4$ intersect regularly at $q$ and are causally independent, see (\ref{assump_xj}), 
    \item[(b)] each $\gamma_{x_j, \xi_j}(\mathbb{R}_+)$ hits $\partial M$ exactly once and transversally before it passes $q$,
    \item[(c)] $(y, \eta) \in  \LcMpo$ lies in the  bicharacteristic from $(q, \zeta)$ and additionally there are no cut points along $\gamma_{q, \zeta^\#}$ from $q$ to $y$.
\end{itemize}
Indeed, 
by \cite[Lemma 3.1]{Kurylev2018}, first we pick $\zeta$ and $\hat{\zeta}^{(1)}$ in $L^{*,+}_q M$ such that there exist $(x_1, \xi_1) \in L^{+}V$ and $(\hat{y}, \hat{\zeta}) \in L^{*,+}V$ with
\[
(q, \hat{\zeta}^{(1)}) = (\gamma_{x_1, \xi_1}({s_q}), (\dot{\gamma}_{x_1, \xi_1}(s_q))^b), \quad 
(\hat{y}, \hat{\eta}) = (\gamma_{q, \zeta^\#}(\hat{s}), (\dot{\gamma}_{q, \zeta^\#}(\hat{s}))^b), 
\]
for some $ 0 <s_q< \rho(x_1, \xi_1)$ and 
$ 0 <\hat{s}< \rho(q, \zeta)$.
Next by Lemma \ref{lm_perturb_zeta1}, one can find such three more covectors $\hat{\zeta}^{(j)}$ with $(x_j, \xi_j)$ for $j = 2,3,4$ at $q$ such that $(x_j, \xi_j)_{j=1}^4$ satisfy the condition (a).
To have (b), we can always replace $(x_j, \xi_j)$ by $(\gamma_{x_j, \xi_j}(s_j),\dot{\gamma}_{x_j, \xi_j}(s_j))$ for some $s_j > 0$ if necessary.
Then by \cite[Lemma 2.4]{Hintz2017}, the null geodesic $\gamma_{x_j, \xi_j}(s)$ always hit $\partial M$ transversally before it passes $q$.
To have (c),  recall we have found 
$\zeta \in L_q^{*,+}M$ with
$(\hat{y}, \hat{\eta}) = (\gamma_{q, \zeta^\#}(\hat{s}), (\dot{\gamma}_{q, \zeta^\#}(\hat{s}))^b) \in L^{*, +}V$ for some
$ 0 <\hat{s}< \rho(q, \zeta)$.
We define
\[
s_y = \inf \{ s> 0: \gamma_{q, \zeta}(s) \in \partial M \}, \quad (y, \eta) = (\gamma_{q, \zeta}(s_y), (\dot{\gamma}_{q, \zeta}(s_y)^b).
\]
Note that $s_y < s_{\hat{y}} < \rho(q, \zeta)$.
In addition, the null geodesic $\gamma_{q, \zeta}(s)$ hit $\partial M$ transversally at $y$.
Thus, $(y, \eta) \in \LcMpo$ and (c) is true for $(y, \eta)$.

Moreover, we show in Lemma \ref{lm_perturb_zeta1} that with $\hat{\zeta}^{(1)}$ given, we have freedom to choose $(x_j, \xi_j), j = 2,3,4$, as long as they are from sufficiently small perturbations of $\hat{\zeta}^{(1)}$.
In Lemma \ref{lm_construction} below, we would like to choose special
$(x_j, \xi_j), j = 2,3,4$ such that we can determine the lower order nonlinear terms from a nondegenerate linear system.
Before that, with the construction above, we can apply Proposition \ref{pp_ufour} to have
\[
\mathcal{C}^{(1)}(\zeta^{(1)}, \zeta^{(2)}, \zeta^{(3)}, \zeta^{(4)})
= \mathcal{C}^{(2)}(\zeta^{(1)}, \zeta^{(2)}, \zeta^{(3)}, \zeta^{(4)}),
\]
where $\mathcal{C}^{(k)}$ is defined in (\ref{eq_C}) with $\beta_j$ replaced by $\beta^{(k)}_j$ for $j=2,3,4$.
In the following, we use the notations
\begin{align*}
C(\zeta^{(1)}, \zeta^{(2)}, \zeta^{(3)}, \zeta^{(4)})
&= \sum_{(i,j,k,l) \in \Sigma(4)} (4\frac{(\ziz + \zjz + \zkz)^2}{|\zi + \zj + \zk|^2_{g^*}} + \frac{(\ziz+\zlz)^2}{| \zi + \zl|^2_{g^*}})
\frac{(\zjz + \zkz)^2}{|\zeta^{(j)} + \zeta^{(k)}|^2_{g^*}},\\
D(\zeta^{(1)}, \zeta^{(2)}, \zeta^{(3)}, \zeta^{(4)})  &=  \sum_{(i,j,k,l) \in \Sigma(4)} (3 \frac{(\zkz+\zlz)^2}{| \zk + \zl|^2_{g^*}} + 2\frac{(\ziz + \zjz + \zkz)^2}{|\zi + \zj + \zk|^2_{g^*}}).
\end{align*}
The above analysis shows that from the fourth order linearization of the DN map, one has
\[
-C (\beta^{(1)}_2)^3 + D \beta^{(1)}_2 \beta^{(1)}_3 - \beta^{(1)}_4 =
-C (\beta^{(2)}_2)^3 + D \beta^{(2)}_2 \beta^{(2)}_3 - \beta^{(2)}_4, 
\]
where we write $C$ and $D$ to denote $C(\zeta^{(1)}, \zeta^{(2)}, \zeta^{(3)}, \zeta^{(4)}), D(\zeta^{(1)}$ and $\zeta^{(2)}, \zeta^{(3)}, \zeta^{(4)})$. 
\begin{lm}\label{lm_construction}
For fixed $q \in \mathbb{W}$ and $\zeta, \hat{\zeta}^{(1)} \in L^{*,+}_q M$,
we can find three different sets of nonzero lightlike covectors
\[
(\zeta^{(1),k}, \zeta^{(2),k}, \zeta^{(3),k}, \zeta^{(4),k}), \quad  k = 1,2,3,
\]
such that $\zeta = \sum_{j=1}^4 \zeta^{(j),k}$ with $\zeta^{(1)} = \alpha_j \hat{\zeta}^{(1)}$ for some $\alpha_j$ and the vectors
\[
(-C(\zeta^{(1),k}, \zeta^{(2),k}, \zeta^{(3),k}, \zeta^{(4),k}), D(\zeta^{(1),k}, \zeta^{(2),k}, \zeta^{(3),k}, \zeta^{(4),k}), -1),\ \quad k=1,2,3,
\] are linearly independent.
\end{lm}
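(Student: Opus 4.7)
The plan is to first observe that the three vectors $(-C_k, D_k, -1)\in\mathbb{R}^3$ are linearly independent if and only if the three points $(C_k, D_k)\in\mathbb{R}^2$ are non-collinear. Thus it suffices to exhibit a continuous family of admissible decompositions $\zeta=\sum_{j=1}^4\zeta^{(j)}$ (with $\zeta^{(1)}$ constrained to the ray $\mathbb{R}_+\hat{\zeta}^{(1)}$) along which the image of $(C,D)$ is not contained in any affine line, equivalently to show that this map has rank $2$ at some point of the parameter space. Selecting three generic points in the image of such a map will then yield the required decompositions.

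\textbf{Base decomposition and parameter family.} Next I would construct an initial admissible decomposition. Working in local coordinates at $q$ in which $g^*_q$ is the standard Minkowski form, I would first choose $\alpha_0>0$ small enough that $\tilde{\zeta}:=\zeta-\alpha_0\hat{\zeta}^{(1)}$ is future-timelike at $q$. Then I would decompose $\tilde{\zeta}$ as a sum of three future lightlike covectors $\zeta^{(2)}_0,\zeta^{(3)}_0,\zeta^{(4)}_0$ in general position, which is always possible: pass to the rest frame of $\tilde{\zeta}$, pick three generic null directions on the celestial sphere, and rescale them so that their sum is $\tilde{\zeta}$. The nearby admissible decompositions parametrized by $(\alpha,\zeta^{(2)},\zeta^{(3)},\zeta^{(4)})$ with $\sum_j\zeta^{(j)}=\zeta$ form a smooth manifold of positive dimension on which $(C,D)$ is a rational map.

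\textbf{Rank $2$ via a structural asymmetry, and the main obstacle.} To verify rank $2$ I would exploit the structural asymmetry between $C$ and $D$ in \eqref{eq_C}. Both are built from the conformal invariants
\[
R_{ij}=\frac{(\zeta_0^{(i)}+\zeta_0^{(j)})^2}{|\zeta^{(i)}+\zeta^{(j)}|_{g^*}^2},\qquad R_{ijk}=\frac{(\zeta_0^{(i)}+\zeta_0^{(j)}+\zeta_0^{(k)})^2}{|\zeta^{(i)}+\zeta^{(j)}+\zeta^{(k)}|_{g^*}^2},
\]
but $D$ depends \emph{linearly} on these while $C$ is \emph{quadratic} (it contains products of the form $R_{jk}R_{ijk}$). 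I would then perturb the base decomposition in two independent directions: (i) incrementing $\alpha$, which rescales the invariants involving index $1$; (ii) rotating $\zeta^{(2)}$ infinitesimally within the light cone at $q$, while adjusting $\zeta^{(3)},\zeta^{(4)}$ to preserve $\sum_j\zeta^{(j)}=\zeta$, which perturbs a different set of $R$-invariants at leading order. Because $C$ weights each $\delta R$ by other $R$-invariants (which themselves differ between the two perturbations) while $D$ accumulates them with only combinatorial coefficients, the two resulting tangent vectors of $(C,D)$ in $\mathbb{R}^2$ are generically linearly independent, which gives rank $2$ and hence an open image. The main obstacle will be this Jacobian computation: the symbols are long sums over the permutation group $\Sigma(4)$, so the calculation must be organized carefully, choosing a base decomposition of high symmetry (so that most terms collapse) and isolating the contribution of a single $R$-invariant that moves under each perturbation, rather than attempting a direct expansion.
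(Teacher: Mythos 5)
Your reduction (three vectors $(-C_k,D_k,-1)$ linearly independent iff the three points $(C_k,D_k)$ are non-collinear, hence it suffices that the image of $(C,D)$ over admissible decompositions is not contained in an affine line) is exactly the right framing, and your idea of producing a family of decompositions of $\zeta$ with $\zeta^{(1)}$ on the line $\mathbb{R}\hat{\zeta}^{(1)}$ matches the paper, which writes down an explicit one-parameter family indexed by a small angle $\theta$. But the entire content of the lemma is the verification that $(C,D)$ does not degenerate onto a line, and this is precisely the step you do not carry out: you assert that the two tangent vectors obtained from your two perturbations are ``generically linearly independent'' because $D$ is linear in the invariants $R_{ij},R_{ijk}$ while $C$ is quadratic. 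That heuristic is not a proof, and the paper's computation shows it is genuinely unreliable here: along the natural family one finds, with $s=\sin(\theta/2)$,
\[
C = -\tfrac{2}{s^{3}}+\tfrac{14}{s^{2}}+\tfrac{10}{s}+\mathcal{O}(1),\qquad
D = \tfrac{3}{2s^{3}}-\tfrac{21}{2s^{2}}-\tfrac{9}{4s}+\mathcal{O}(1),
\]
so $C$ and $D$ are \emph{proportional to leading order}, and in the combination $C+\tfrac{4}{3}D$ the $s^{-2}$ terms cancel as well; affine independence only emerges at the third order of the expansion ($C+\tfrac43 D=7/s+\mathcal{O}(1)$), and the resulting determinant is of size $s^{-4}r^{-7}$ for a carefully chosen geometric sequence of angles. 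Two orders of cancellation is exactly the kind of conspiracy your ``structural asymmetry'' argument cannot rule out, so the Jacobian/determinant computation cannot be waved away as generic; it must be done, either by explicit asymptotics as in the paper or by an honest evaluation of your two directional derivatives at a concrete base point.

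A secondary issue: your base decomposition starts by choosing $\alpha_0>0$ small so that $\zeta-\alpha_0\hat{\zeta}^{(1)}$ is future-timelike. Since $\zeta$ and $\hat{\zeta}^{(1)}$ are both future-pointing lightlike, $|\zeta-\alpha_0\hat{\zeta}^{(1)}|^2_{g^*}=-2\alpha_0\langle\zeta,\hat{\zeta}^{(1)}\rangle_{g^*}$ with $\langle\zeta,\hat{\zeta}^{(1)}\rangle_{g^*}<0$, so this covector is \emph{spacelike} for $\alpha_0>0$; you need $\alpha_0<0$ (consistently, the paper's $\alpha_1=\cos\theta/(\cos\theta-1)$ is negative). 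This is fixable, but together with the unexecuted rank computation it indicates the quantitative part of the argument has not actually been engaged.
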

This implies that we can solve 
$(\beta^{(1)}_2)^3 - (\beta^{(2)}_2)^3, \beta^{(1)}_2 \beta^{(1)}_3 - \beta^{(2)}_2 \beta^{(2)}_3, \beta^{(1)}_4 - \beta^{(2)}_4$ from a nondegenerate $3 \times 3$ linear system.
\begin{proof}
We choose local coordinates $x = (x^0,x^1, x^2, x^3)$ at $q$ such that $g$ coincides with the Mankowski metric.
By rotating the coordinate system in the spatial variable, without loss of generality, we can assume
\[
\zeta = (-1, 0, \cos \varphi, \sin \varphi), \quad {\zh}^{(1)} = (-1, 1, 0, 0),
\]
where $\varphi \in [0, 2\pi)$. For $\theta \neq 0$ sufficiently small, we choose
\begin{align*}
\zh^{(2)} &= (-1, \cos \theta, \sin \theta \sin \varphi, -\sin \theta \cos \varphi), \\
 \zh^{(3)} &= (-1, \cos \theta, -\sin \theta \sin \varphi, \sin \theta \cos \varphi), \\
\zh^{(4)} &=(-1, \cos \theta, \sin \theta \cos \varphi, \sin \theta \sin \varphi).
\end{align*}
Then we have $\zeta = \sum_{j=1}^4 \zeta^{(j)}$, where we set $\zeta^{(j)} = \alpha_j \zh^{(j)}$ with
\begin{align*}
\alpha_1 = \frac{\cos \theta}{\cos \theta -1} , \quad \alpha_2 = \alpha_3= -\frac{(\cos \theta -1) + \sin \theta}{2(\cos \theta - 1) \sin \theta}, \quad \alpha_4 =  \frac{1}{\sin \theta}.
\end{align*}
One can compute 
\begin{align*}
\frac{\alpha_1}{\alpha_2} = \frac{-2 \cos \theta \cos(\theta/2)}{\costw - \sintw}, \quad 
\frac{\alpha_1}{\alpha_4} = \frac{\cos \theta \cos(\theta/2)}{ - \sintw}, \quad 
\frac{\alpha_2}{\alpha_4} = \frac{\costw - \sintw}{2\sintw}.
\end{align*}
Note $\zeta_0^{(j)} = -\alpha_j, j = 1, 2,3, 4$.
We compute
\[
| \zk + \zl|^2_{g^*} = 2 \lge \zk, \zl \rge_{g^*} = 2 \alpha_k \alpha_l \lge \zh^{(k)}, \zh^{(l)} \rge_{g^*},
\]
which implies
\begin{align*}
&| \zeta^{(1)} + \zeta^{(2)}|^2_{g^*} = 2 \alpha_1\alpha_2 (\cos \theta - 1),
&| \zeta^{(1)} + \zeta^{(3)}|^2_{g^*} = 2 \alpha_1\alpha_3(\cos \theta - 1), \\
&| \zeta^{(1)} + \zeta^{(4)}|^2_{g^*} = 2 \alpha_1\alpha_4(\cos \theta - 1),
&| \zeta^{(2)} + \zeta^{(3)}|^2_{g^*} = -4\alpha_2 \alpha_3 \sin^2 \theta, \\
&| \zeta^{(2)} + \zeta^{(4)}|^2_{g^*} = -2\alpha_2 \alpha_4 \sin^2 \theta,
&| \zeta^{(3)} + \zeta^{(4)}|^2_{g^*} = -2\alpha_3 \alpha_4 \sin^2 \theta,
\end{align*}
It follows that
\begin{align*}
&S_{12} \equiv \frac{(\zeta_0^{(1)} + \zeta_0^{(2)})^2}{| \zeta^{(1)} + \zeta^{(2)}|^2_{g^*}}  = \frac{(\alpha_1 + \alpha_2)^2}{2 \alpha_1\alpha_2 (\cos \theta - 1)}
\equiv \frac{1}{2(\cos \theta -1)}I_1,\\
&S_{13} \equiv  \frac{(\zeta_0^{(1)} + \zeta_0^{(3)})^2}{| \zeta^{(1)} + \zeta^{(3)}|^2_{g^*}}  = \frac{(\alpha_1 + \alpha_3)^2}{2 \alpha_1\alpha_3 (\cos \theta - 1)}
\equiv \frac{1}{2(\cos \theta -1)}I_1, \\
&S_{14} \equiv \frac{(\zeta_0^{(1)} + \zeta_0^{(4)})^2}{| \zeta^{(1)} + \zeta^{(4)}|^2_{g^*}}  = \frac{(\alpha_1 + \alpha_4)^2}{2 \alpha_1\alpha_4 (\cos \theta - 1)}
\equiv \frac{1}{2(\cos \theta -1)}I_3, \\
&S_{23} \equiv \frac{(\zeta_0^{(2)} + \zeta_0^{(3)})^2}{| \zeta^{(2)} + \zeta^{(3)}|^2_{g^*}}
= \frac{(\alpha_2 + \alpha_3)^2}{-4\alpha_2 \alpha_3 \sin^2 \theta}
= -\frac{1}{\sin^2 \theta}, \\
&S_{24} \equiv \frac{(\zeta_0^{(2)} + \zeta_0^{(4)})^2}{| \zeta^{(2)} + \zeta^{(4)}|^2_{g^*}}  = \frac{(\alpha_2 + \alpha_4)^2}{-2\alpha_2 \alpha_4 \sin^2 \theta}
\equiv -\frac{1}{2\sin^2 \theta}I_2,\\
&S_{34} \equiv \frac{(\zeta_0^{(3)} + \zeta_0^{(4)})^2}{| \zeta^{(3)} + \zeta^{(4)}|^2_{g^*}}  = \frac{(\alpha_3 + \alpha_4)^2}{-2\alpha_3 \alpha_4 \sin^2 \theta}
\equiv -\frac{1}{2\sin^2 \theta}I_2,
\end{align*}
where we write $s = \sintw$ and compute
\begin{align*}
I_1 &= \frac{(\alpha_1 + \alpha_2)^2}{\alpha_1\alpha_2 }
= -\frac{1}{2 \cos \theta} + (-2 \costw + \frac{1}{2 \costw \costt})s + 2s^2,\\
I_2 &= \frac{(\alpha_2 + \alpha_4)^2}{\alpha_2\alpha_4 }
= (\frac{\costw}{2})\frac{1}{s} +\frac{3}{2} + 2 \costw s + 2 s^2 +  (-\frac{4}{\sintw - \costw}) s^3,\\
I_3 &= \frac{(\alpha_1 + \alpha_4)^2}{\alpha_1\alpha_4 }
= (-\costw \cos \theta) \frac{1}{s}+ 2 + (-\frac{1}{\costw\cos \theta})s.
\end{align*}
Here we expand each term according to the order of $s$,
in order to analyze its behavior near $\theta = 0$.
If we further write
\begin{align*}
&\cos \theta  = 1- 2s^2, \quad
\frac{1}{\costt} = 1 + 2s^2 + 4s^4 + \mathcal{O}(s^6), \\
&\costw = 1- \frac{1}{2}s^2 - \frac{1}{8}s^4 + \mathcal{O}(s^6),\quad
\frac{1}{\costw} = 1 + \frac{s^2}{2} + \frac{3}{8} s^4 + \mathcal{O}(s^6), \\
&{\costw \costt} = 1 - \frac{5}{2}s^2 + \mathcal{O}(s^4),\quad
\frac{1}{\costw \costt} = 1 + \frac{5}{2}s^2 + \mathcal{O}(s^4),
\end{align*}
then using Mathematica codes, we have
\begin{align*}
&I_1 = -\frac{1}{2} - \frac{3}{2}s + s^2 + \frac{9}{4}s^3 - 2s^4 + \mathcal{O}(s^5), \\
&I_2 = \frac{1}{2s} + \frac{3}{2}  + \frac{7}{4}s + 2s^2 + \frac{47}{16}s^3 + 4s^4 + \mathcal{O}(s^5), \\
&I_3 = -\frac{1}{s} + 2  + \frac{3}{2}s - \frac{27}{8}s^3 + \mathcal{O}(s^5).
\end{align*}
Plugging it back, we have 
\begin{align*}
&S_{12} = S_{13} = \frac{1}{8s^2} + \frac{3}{8s} -\frac{1}{4}- \frac{9}{16}s + \frac{1}{2}s^2 + \mathcal{O}(s^3), \\
&S_{23} = -\frac{1}{4s^2}  -\frac{1}{4}-  \frac{1}{4}s^2 + \mathcal{O}(s^4), \\
&S_{24} = S_{34} =-\frac{1}{16s^3} - \frac{3}{16s^2} - \frac{9}{32s} - \frac{83}{128}s + \mathcal{O}(s^2), \\
&S_{14} = \frac{1}{4s^3}-\frac{1}{2s^2} - \frac{3}{8s} +\frac{27}{32}s + \mathcal{O}(s^3).
\end{align*}
It follows that
\begin{align*}
&D_1(\zeta^{(1)}, \zeta^{(2)}, \zeta^{(3)}, \zeta^{(4)})  =  \sum_{(i,j,k,l) \in \Sigma(4)} 3 \frac{(\zkz+\zlz)^2}{| \zk + \zl|^2_{g^*}}\\
&= 3 \times 4 (S_{12}\times 2 + S_{14} + S_{23} + S_{24}\times 2)
=\frac{3}{2s^3} - \frac{21}{2s^2} - \frac{9}{4s} + \mathcal{O}(1).
\end{align*}
Since for $(i,j,k,l) \in \Sigma(4)$ we have
\[
|\zj + \zk + \zl|^2_{g^*} = |\zeta - \zi|^2_{g^*} = -2 \alpha_i \lge \zeta, \zh^{(i)} \rge_{g^*},
\]
then
\begin{align*}
&| \zeta^{(1)} + \zeta^{(2)}+ \zeta^{(3)}|^2_{g^*}
= -2 \alpha_4(\sin \theta -1),
&| \zeta^{(1)} + \zeta^{(2)}+ \zeta^{(4)}|^2_{g^*}
= 2\alpha_3,\\
&| \zeta^{(1)} + \zeta^{(3)}+ \zeta^{(4)}|^2_{g^*}
= 2\alpha_2,
&| \zeta^{(2)} + \zeta^{(3)}+ \zeta^{(4)}|^2_{g^*}
= 2\alpha_1.
\end{align*}
We compute
\begin{align*}
&R_{123} \equiv \frac{(\zeta_0^{(1)} + \zeta_0^{(2)}+ \zeta_0^{(3)})^2}{| \zeta^{(1)} + \zeta^{(2)}+ \zeta^{(3)}|^2_{g^*}}
= \frac{(1-\alpha_4)^2}{-2 \alpha_4(\sin \theta -1)}
=\frac{1-\sin\theta}{2\sin \theta}
= -\frac{1}{2} + \frac{1}{4 \costw} s,\\
&R_{124} \equiv \frac{(\zeta_0^{(1)} + \zeta_0^{(2)}+ \zeta_0^{(4)})^2}{| \zeta^{(1)} + \zeta^{(2)}+ \zeta^{(4)}|^2_{g^*}}
= \frac{(1-\alpha_3)^2}{2\alpha_3}
= -\frac{1}{4}(\frac{1}{\sin \theta} + \frac{1}{\cos \theta -1}) -1 -\frac{(\cos \theta -1 )\sin \theta}{(\cos \theta -1 ) + \sin \theta}\\
&\quad \quad \quad \quad \quad \quad \quad \quad\quad \quad \quad \quad \quad
=  \frac{1}{8s^2} - \frac{1}{8 \costw} \frac{1}{s} -1 + (1 + \frac{1}{\costt}) s^2 + (\frac{2\costw}{\costt})s^3,\\
&R_{134} \equiv \frac{(\zeta_0^{(1)} + \zeta_0^{(3)}+ \zeta_0^{(4)})^2}{| \zeta^{(1)} + \zeta^{(3)}+ \zeta^{(4)}|^2_{g^*}}  = \frac{(1-\alpha_2)^2}{2\alpha_2}
= R_{124}, \\
&R_{234} \equiv \frac{(\zeta_0^{(2)} + \zeta_0^{(3)}+ \zeta_0^{(4)})^2}{|\zeta^{(2)} + \zeta^{(3)}+ \zeta^{(4)}|^2_{g^*}}
= \frac{(1-\alpha_1)^2}{2 \alpha_1}
=\frac{1}{2 \cos \theta(\cos \theta -1)}
= -\frac{1}{4 \costt} \frac{1}{s^2}.
\end{align*}
This implies that
\begin{align*}
&R_{123} = R_{134}
=  \frac{1}{4s} -\frac{1}{2}+ \frac{1}{8}s +  \mathcal{O}(s^3),\\
&R_{124} = \frac{1}{8s^2} - \frac{1}{8s} -1 - \frac{1}{16}s + 2s^2 + \mathcal{O}(s^3),\\
&R_{234}
 = -\frac{1}{4s^2} - \frac{1}{2} - s^2 + \mathcal{O}(s^3).
\end{align*}
Then we have
\begin{align*}
D_2(\zeta^{(1)}, \zeta^{(2)}, \zeta^{(3)}, \zeta^{(4)})
&=  \sum_{(i,j,k,l) \in \Sigma(4)}  2\frac{(\ziz + \zjz + \zkz)^2}{|\zi + \zj + \zk|^2_{g^*}}
= - 36 + \mathcal{O}({s}).
\end{align*}
Thus, one has
\begin{align}\label{eq_d}
d(\theta) &\equiv D(\zeta^{(1)}, \zeta^{(2)}, \zeta^{(3)}, \zeta^{(4)}) = D_1(\zeta^{(1)}, \zeta^{(2)}, \zeta^{(3)}, \zeta^{(4)}) + D_2(\zeta^{(1)}, \zeta^{(2)}, \zeta^{(3)}, \zeta^{(4)}) \nonumber \\
&=\frac{3}{2s^3} - \frac{21}{2s^2} - \frac{9}{4s} + \mathcal{O}(1).
\end{align}
Next, we compute
\begin{align*}
&C_2(\zeta^{(1)}, \zeta^{(2)}, \zeta^{(3)}, \zeta^{(4)})
= \sum_{(i,j,k,l) \in \Sigma(4)} \frac{(\ziz+\zlz)^2}{| \zi + \zl|^2_{g^*}}
\frac{(\zjz + \zkz)^2}{|\zeta^{(j)} + \zeta^{(k)}|^2_{g^*}}\\
=& 8(S_{12}S_{34} \times 2 + S_{14}S_{23})
=-\frac{5}{8s^5} + \frac{1}{4s^4} - \frac{19}{16s^3} -\frac{1}{4s^2}- \frac{195}{64s} + \mathcal{O}(1).
\end{align*}
We have
\begin{align*}
&C_1(\zeta^{(1)}, \zeta^{(2)}, \zeta^{(3)}, \zeta^{(4)})
= \sum_{(i,j,k,l) \in \Sigma(4)} 4\frac{(\ziz + \zjz + \zkz)^2}{|\zi + \zj + \zk|^2_{g^*}}
\frac{(\zjz + \zkz)^2}{|\zeta^{(j)} + \zeta^{(k)}|^2_{g^*}}
=  C_{11} + C_{12} + C_{13},
\end{align*}
where
\begin{align*}
C_{11} = 4 \times 2 R_{123}(S_{12}\times 2 + S_{23})
= \frac{3}{2s^2} - \frac{9}{2s} + \mathcal{O}(1),
\end{align*}
and
\begin{align*}
C_{12} =4 \times 2R_{234}(S_{24}\times 2 + S_{23})
= \frac{1}{4s^5} + \frac{5}{4s^4} + \frac{13}{8s^3} +\frac{19}{4s^2} + \frac{187}{32s} + \mathcal{O}(1),
\end{align*}
and
\begin{align*}
C_{13} = 4 \times 4 R_{124}(S_{12} + S_{24} + S_{14})
= \frac{3}{4s^5} - \frac{3}{2s^4} - \frac{39}{16s^3} +\frac{8}{s^2} + \frac{749}{64s} + \mathcal{O}(1).
\end{align*}
It follows that
\begin{align*}
C_1(\zeta^{(1)}, \zeta^{(2)}, \zeta^{(3)}, \zeta^{(4)})
= \frac{5}{8s^5} - \frac{1}{4s^4} - \frac{13}{16s^3} +\frac{57}{4s^2} + \frac{835}{64s} + \mathcal{O}(1).
\end{align*}
Then we have
\begin{align}\label{eq_c}
&c(\theta) \equiv  C(\zeta^{(1)}, \zeta^{(2)}, \zeta^{(3)}, \zeta^{(4)})
= -\frac{2}{s^3} +\frac{14}{s^2} + \frac{10}{s} + \mathcal{O}(1).
\end{align}
Combining (\ref{eq_c}) and (\ref{eq_d}), we have
\begin{align}\label{eq_limitcd}
c(\theta) + \frac{4}{3} d(\theta) = \frac{7}{s} + \mathcal{O}(1).
\end{align}

We claim that there exist sufficiently small $\theta_1, \theta_2, \theta_3 > 0 $ such that
\[
(c(\theta_j), -d(\theta_j), 1), \quad j=1,2,3,
\]
are linearly independent.
    Indeed,
    we define \[
    g(\theta) \equiv  c(\theta) + \frac{4}{3} d(\theta).
    \]
    The analysis above shows that $g(\theta) = {7}/{s} + \mathcal{O}(1)$ for $s = \sintw$ small enough.
    In the following, we would like to find $\theta_j, j=1,2,3$ in a small neighborhood of $\theta = 0$, such that the determinant does not vanish, i.e.,
    \begin{align*}
    &\det
    \left[ {
        \begin{array}{ccc}
        c(\theta_1) & d(\theta_1) & 1 \\
        c(\theta_2) & d(\theta_2) & 1 \\
        c(\theta_3) & d(\theta_3) & 1
        \end{array}
    } \right] \neq 0
    \end{align*}
    For simplification, we write $g(\theta_j) = g_j$ and $d(\theta_j) = d_j$ for $j = 1,2,3$.
    We compute
    \begin{align*}
    &\det
    \left[ {
        \begin{array}{ccc}
        c(\theta_1) & d(\theta_1) & 1 \\
        c(\theta_2) & d(\theta_2) & 1 \\
        c(\theta_3) & d(\theta_3) & 1
        \end{array}
    } \right]
    =
    \det
    \left[ {
        \begin{array}{ccc}
        g_1 & d_1& 1 \\
        g_2 & d_2 & 1 \\
        g_3 & d_3 & 1
        \end{array}
    } \right]\\
    & = (g_2 d_3 - g_3 d_2) - (g_1 d_3 - g_3 d_1) + (g_1 d_2 - g_2 d_1)\\
    &= (g_2 - g_3)(d_2- d_1) + (g_1 - g_2)(d_2 - d_3).
    \end{align*}
    We choose sufficiently small $\theta_1$ with
    $
    \sin \theta_2 = r \sin \theta_1, \sin \theta_3 ={r^2} \sin \theta_1
    $
    for small parameter $0< r < 1$.
    Recall $s = \sin (\theta_1/2)$.
    By (\ref{eq_d}) and (\ref{eq_limitcd}), one has
    \begin{align*}
    &d_2 = - \frac{3}{ 2\sin^3 (\theta_2/2)} + \mathcal{O}(\frac{1}{ \sin^2 (\theta_2/2)})
    = - \frac{3 }{ 2s^3r^3} + \mathcal{O}(\frac{1}{ s^2r^2}), \\
    &d_3 = - \frac{3}{ 2\sin^3 (\theta_3/2)} + \mathcal{O}(\frac{1}{ \sin^2 (\theta_3/2)})
    = - \frac{3 }{ 2s^3r^6} + \mathcal{O}(\frac{1}{ s^2r^4}),\\
    & g_2 = \frac{7}{\sin (\theta_2/2)}+ \mathcal{O}(1) = \frac{7}{ s r}+ \mathcal{O}(1), \quad
    g_3 = \frac{7}{\sin (\theta_3/2)}+ \mathcal{O}(1) = \frac{7}{ s r^2}+ \mathcal{O}(1). \\
    \end{align*}
    This implies that
    \begin{align*}
    d_2 - d_1 = - \frac{3 }{ 2s^3r^3} + \mathcal{O}(\frac{1}{ s^2r^2} + \frac{1}{ s^3}), \quad
    &d_2 - d_3 = - \frac{3 }{ 2s^3r^6} + \mathcal{O}(\frac{1}{ s^2r^4} + \frac{1}{ t^3}),\\
    g_2 - g_3 = -\frac{7}{ s r^2}+ \mathcal{O}(\frac{1}{ s r}), \quad
    &g_1 - g_2 = -\frac{7}{ s r}+ \mathcal{O}(\frac{1}{ s}).
    \end{align*}
    Thus, we have
    \begin{align*}
    (g_2 - g_3)(d_3- d_1) + (g_1 - g_2)(d_2 - d_3) = \frac{21 }{ 2s^4 r^7} + \mathcal{O}(\frac{1}{ s^4r^6}) \neq 0.
    \end{align*}
    for sufficiently small $s$ and $r$.
    This proves the desired result.
\end{proof}
Thus, combining Lemma \ref{lm_perturb_zeta1}, Lemma \ref{lm_construction}, and Proposition \ref{pp_ufour}, we can choose three different sets
\[
(\zeta^{(1),j}, \zeta^{(2),j}, \zeta^{(3),j}, \zeta^{(4),j}), \quad k = 1,2,3,
\]
such that from
\[
-C^{(j)} ((\beta^{(1)}_2)^3- (\beta^{(2)}_2)^3) + D^{(j)} (\beta^{(1)}_2 \beta^{(1)}_3- \beta^{(2)}_2 \beta^{(2)}_3) - (\beta^{(1)}_4 - \beta^{(2)}_4) =0, \quad j=1,2,3,
\]
we can conclude that
\[
(\beta^{(1)}_2)^3- (\beta^{(2)}_2)^3 = \beta^{(1)}_2 \beta^{(1)}_3- \beta^{(2)}_2 \beta^{(2)}_3 = \beta^{(1)}_4 - \beta^{(2)}_4 = 0.
\]
Here we write $C^{(j)} = C(\zeta^{(1),j}, \zeta^{(2),j}, \zeta^{(3),j}, \zeta^{(4),j})$ and $D^{(j)} = D(\zeta^{(1),j}, \zeta^{(2),j}, \zeta^{(3),j}, \zeta^{(4),j})$.
It follows that
\[\beta^{(1)}_2 = \beta^{(2)}_2, \quad \beta^{(1)}_4 = \beta^{(2)}_4
\]
at $q$.
In the case that $\beta^{(1)}_2 = \beta^{(2)}_2$ does not vanish,
we conclude that $\beta^{(1)}_3 = \beta^{(2)}_3$.
Otherwise, if $\beta^{(1)}_2(q) = \beta^{(2)}_2(q) = 0$,  one can use the third order linearization
\[
\partial_{\epsilon_1}\partial_{\epsilon_2}\partial_{\epsilon_3} \Lambda_F (f) |_{\epsilon_1 = \epsilon_2 = \epsilon_3=0} = \lge \nu, \nabla \mathcal{U}^{(3)} \rge|_{\partial M}
\]
to show that
$\beta^{(1)}_3 = \beta^{(2)}_3$.
More explicitly, by Proposition \ref{pp_Aijk} and a similar construction of $\zeta^{(1)}, \zeta^{(2)}, \zeta^{(3)}$ in \cite{Hintz2020}, we have
\[
\mathcal{Q}^{(1)}(\zeta^{(1)}, \zeta^{(2)}, \zeta^{(3)}) = \mathcal{Q}^{(2)}(\zeta^{(1)}, \zeta^{(2)}, \zeta^{(3)})
\quad \Rightarrow \quad
\beta^{(1)}_3 = \beta^{(2)}_3.
\]
In both cases, we have
\[
\beta^{(1)}_i = \beta^{(2)}_i, \quad \text{ for }i = 2,3,4,
\]
at any $q \in \mathbb{W}$.


\section{Piriou Conormal distributions}\label{sec_Piriou}
In the following, we introduce Piriou Conormal distributions.
In Section \ref{sec_N}, we determine $\beta_N$ at each $q \in \mathbb{W}$ from the singularities of the $N$th order linearization of the DN map by using this special class of conormal distributions.

Let $K$ be a submanifold of $M$ with $\codim(K)=1$.
We follow \cite{Piriou1988} to define the class of conormal distributions that vanish to certain order at $K$.
Note the order of conormal distributions in \cite{Piriou1988} is the same as that of symbols and we shift it by $-\frac{n}{4} + \frac{N}{2}$ following the definition in \cite{MR2304165}.
\begin{df}
    Let $ m < -1$ and  $k(m) \in \mathbb{N}$ such that $ -m -2 \leq k(m) < -m -1$.
    We say $u \in \Ir^{m - \frac{n}{4} + \frac{1}{2}}(K)$ if $u \in \Ical^{m - \frac{n}{4} + \frac{1}{2}}(K)$ vanishing to order $k(m) + 1$ at $K$.
\end{df}
By \cite[Propostion 2.3 and 2.4]{Piriou1988}, a distribution $u \in \Ir^{m}(K)$ if and only if there exists $h \in C^\infty$ vanishing to order $k(m)$ such that $u = hv$ with $v \in \Ir^{m + k(m)}(K)$, for $m <-1$.
Additionally, by \cite{Barreto2021a}, for any $u \in \Ical^{m - \frac{n}{4} + \frac{1}{2}}(K)$ with compact support,
by subtracting a compactly supported smooth function whose derivatives at $K$ up to order $k(m) +1$ coincide with those of $u$, one can modify $u$ such that $u \in  \Ir^{m - \frac{n}{4} + \frac{1}{2}}(K)$.
This can be done since we can show that $u$ is continuous up to order $k(m) +1$ at $K$, for $m<-1$.

\begin{lm}[{\cite[Proposition 2.4]{Barreto2021a}}]\label{lm_v1}
Let $m <-1$ and $u \in \Ical^{m - \frac{n}{4} + \frac{1}{2}}(K)$.
Then there exists $e \in C^\infty$ such that $u = v + e$ with $v \in \Ir^{m - \frac{n}{4} + \frac{1}{2}}(K)$.
\end{lm}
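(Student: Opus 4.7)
The plan is to work locally near $K$ using the oscillatory integral representation of $u$, match finitely many normal derivatives with a smooth correction, and then observe that what remains is a conormal distribution vanishing to the required order.

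Step 1. Localize by a partition of unity so it suffices to consider $u$ supported in a coordinate chart adapted to $K$. Choose local coordinates $x = (x', x_n)$ with $K = \{x_n = 0\}$ and write
\[
u(x', x_n) = (2\pi)^{-1}\int e^{ix_n \xi_n} a(x, \xi_n)\, d\xi_n,
\]
where $a \in S^{m}$ is compactly supported in $x$. The distinguished rescaling of order between \cite{Piriou1988} and \cite{MR2304165} only shifts indices by $-n/4 + 1/2$, so the symbol has order $m$ in the $\xi_n$ variable.

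Step 2. Show that $u$ is classically differentiable up to order $k(m)$ across $K$. For $0 \leq j \leq k(m)$, the derivative $\partial_{x_n}^j u$ has symbol $(i\xi_n)^j a(x, \xi_n) \in S^{m+j}$. Since $k(m) < -m - 1$, we have $m + j < -1$ for all such $j$, so the defining oscillatory integral converges absolutely and $\partial_{x_n}^j u$ is continuous on $M$. In particular the trace
\[
h_j(x') \equiv (\partial_{x_n}^j u)\big|_{x_n = 0} \in C(K)
\]
is well defined for $j = 0, 1, \ldots, k(m)$.

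Step 3. Build a smooth function matching these traces. By a standard Borel-type construction (here only finitely many derivatives are prescribed, so one can simply take a compactly supported cutoff times the Taylor polynomial $\sum_{j=0}^{k(m)} \frac{x_n^j}{j!}\tilde{h}_j(x')$, where $\tilde{h}_j$ is a smooth approximation of $h_j$), there exists $e \in C_c^\infty(M)$ with
\[
\partial_{x_n}^j e\big|_{x_n = 0} = h_j, \qquad 0 \leq j \leq k(m).
\]
Since the $h_j$ are only continuous one must smoothen them first, but this does not change the trace values we care about.

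Step 4. Set $v = u - e$. Because $e \in C^\infty$ is harmless with respect to the conormal class and the wavefront set, $v \in \Ical^{m - n/4 + 1/2}(K)$. By construction $\partial_{x_n}^j v|_{x_n = 0} = 0$ for $0 \leq j \leq k(m)$, so $v$ vanishes to order $k(m) + 1$ at $K$; hence $v \in \Ir^{m - n/4 + 1/2}(K)$ as required. Patching via the partition of unity from Step 1 and absorbing all smooth pieces into a single $e$ completes the argument.

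The main obstacle is Step 2: one must exploit the hypothesis $m < -1$ quantitatively to justify that the normal derivatives of $u$ up to order $k(m)$ are genuine continuous functions with well-defined restrictions to $K$, rather than mere distributions. Once this regularity is in hand, the rest of the proof is a routine Taylor-matching argument; the choice $k(m) + 1 > -m - 1$ is precisely what makes the two notions of vanishing order in \cite{Piriou1988} compatible with the symbol calculus.
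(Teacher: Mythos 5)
Your overall strategy coincides with the one the paper intends (the lemma is quoted from \cite{Barreto2021a}, and the paper's own justification is exactly your plan: use $m<-1$ to see that the normal derivatives of $u$ up to order $k(m)$ are continuous across $K$, subtract a compactly supported smooth function matching their traces, and observe that the remainder vanishes to order $k(m)+1$). Your Step 2 is correct and is the quantitative heart of the matter: for $j\leq k(m)<-m-1$ the symbol of $\partial_{x_n}^j u$ lies in $S^{m+j}$ with $m+j<-1$, so the oscillatory integral converges absolutely and the trace exists.

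However, Step 3 as written contains a genuine error that would break Step 4. You assert that the traces $h_j$ are ``only continuous'' and propose to replace them by smooth approximations $\tilde h_j$ before building $e$. If you do that, then $\partial_{x_n}^j e|_{x_n=0}=\tilde h_j\neq h_j$, so $\partial_{x_n}^j(u-e)|_{x_n=0}=h_j-\tilde h_j$ need not vanish, and $v=u-e$ does \emph{not} vanish to order $k(m)+1$ at $K$; the conclusion fails. Moreover, if the $h_j$ really were only continuous, no smooth $e$ could match them exactly, so the whole subtraction scheme would be impossible. The point you are missing is that conormality gives smoothness in the directions tangent to $K$: applying $\partial_{x'}^\alpha$ to the oscillatory integral only differentiates the amplitude $a(x,\xi_n)$ in $x'$, which stays in $S^{m+j}$, so $\partial_{x'}^\alpha\partial_{x_n}^j u$ is continuous for every $\alpha$ and each trace $h_j$ is in fact in $C^\infty(K)$. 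With that observation no smoothing is needed, $e(x)=\chi(x_n)\sum_{j=0}^{k(m)}\frac{x_n^j}{j!}h_j(x')$ is genuinely smooth and matches the traces exactly, and the rest of your argument goes through.
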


%

Moreover, we have the following lemma, see \cite[Theorem 2.1]{Piriou1988}
and also \cite[Proposition 2.5]{Barreto2021a}.
\begin{lm}\label{cl_Piriou}
Let $m\in \mathbb{N}$ with $m \geq 2$.
Suppose $v_1 \in \Ir^\mu(\Lambda_1)$.
Then
\[
v_1^m \in \Ir^{\mu + (m -1)(\mu+\frac{3}{2})}(\Lambda_1),
\]
with the principal symbol given by $m$-fold fiberwise convolution
\begin{align*}
\sigmp(v_1^m)
= (2\pi)^{-(m-1)} \underbrace{{\sigmp}(v_1) \ast \ldots \ast {\sigmp}(v_1) }_{m}.
\end{align*}
\end{lm}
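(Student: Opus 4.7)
The plan is to reduce the claim to a local oscillatory-integral computation adapted to $K_1$. Since $v_1 \in \Ir^\mu(\Lambda_1)$ is smooth away from $K_1$, it suffices to work in a local chart near a point of $K_1$. Choose coordinates $x = (x', x_n)$ so that $K_1 = \{x_n = 0\}$, and recall $n = \dim M = 4$ and $\codim(K_1) = 1$. Under the Hörmander convention, $v_1 \in \Ical^\mu(\Lambda_1)$ admits a local representation
\[
v_1(x) = \int_{\mathbb{R}} e^{i x_n \theta}\, a(x, \theta)\, d\theta, \qquad a \in S^{\mu + 1/2}(\mathbb{R}^4 \times \mathbb{R}),
\]
where the $+1/2$ shift is $(n - 2N)/4$ with $N = 1$ phase variable. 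The Piriou hypothesis that $v_1$ vanishes to order $k(\mu + 1/2) + 1$ at $K_1$ in particular gives $v_1 \in L^m_{\mathrm{loc}}$, so that the product $v_1^m$ is well-defined as a distribution.

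Next I would expand
\[
v_1(x)^m = \int \cdots \int e^{i x_n(\theta_1 + \cdots + \theta_m)} \prod_{j=1}^m a(x, \theta_j)\, d\theta_1 \cdots d\theta_m
\]
and pass to the new variable $\theta = \theta_1 + \cdots + \theta_m$, integrating out the remaining $m - 1$ fiber directions. This puts $v_1^m$ in the form $\int e^{i x_n \theta}\, b_m(x, \theta)\, d\theta$ with $b_m(x, \cdot) = a(x, \cdot) \ast \cdots \ast a(x, \cdot)$ the $m$-fold convolution in $\theta$. A scaling argument shows that the convolution of $m$ classical symbols of order $s = \mu + 1/2$ in one fiber variable lies in $S^{ms + (m - 1)} = S^{m\mu + (3m - 2)/2}$; subtracting back the $1/2$ shift returns the Hörmander order $m\mu + (3m - 2)/2 - 1/2 = \mu + (m - 1)(\mu + 3/2)$, matching the stated order. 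Alternatively one can proceed by induction on $m$: the base case $m = 2$ is the same oscillatory-integral calculation, and the step $v_1^m = v_1 \cdot v_1^{m-1}$ reduces to two-fold multiplication.

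Finally, I would upgrade from $\Ical$ to $\Ir$ and read off the principal symbol. Since $v_1$ vanishes to order $k(\mu + 1/2) + 1$ at $K_1$, the product $v_1^m$ vanishes to order at least $m(k(\mu + 1/2) + 1)$, which dominates the vanishing threshold required by the Piriou class at the new order; combined with Lemma \ref{lm_v1} this places $v_1^m \in \Ir^{\mu + (m - 1)(\mu + 3/2)}(\Lambda_1)$. Extracting the top-order part of $b_m$ on $\Lambda_1$ yields the principal symbol as an $m$-fold fiberwise convolution of $\sigmp(v_1)$, and the normalization constant $(2\pi)^{-(m - 1)}$ reflects the Fourier convention intrinsic to the definition of $\sigmp$ on $\Lambda_1$, mirroring the identity $(fg)^\wedge = (2\pi)^{-1} \hat{f} \ast \hat{g}$ in one dimension applied $m - 1$ times. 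The main obstacle is the careful bookkeeping of the symbol-order shift and of the $(2\pi)$-factors; once the oscillatory-integral representation is fixed, the rest is essentially mechanical.
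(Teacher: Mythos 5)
The paper does not actually prove this lemma; it quotes it from \cite[Theorem 2.1]{Piriou1988} (see also \cite[Proposition 2.5]{Barreto2021a}), so the relevant comparison is with Piriou's argument, whose entire content lies in the one step your sketch treats as routine. The gap is the sentence ``A scaling argument shows that the convolution of $m$ classical symbols of order $s=\mu+1/2$ in one fiber variable lies in $S^{ms+(m-1)}$.'' This is false for general classical symbols with $s<-1$: writing $(a\ast a)(\theta)=\int a(\theta-\sigma)a(\sigma)\,d\sigma$ and splitting the integral, the regions $|\sigma|\le|\theta|/2$ and $|\theta-\sigma|\le|\theta|/2$ contribute terms asymptotic to $a(\theta)\int a\,d\sigma$, which have order $s$, strictly larger than $2s+1$ because $s<-1$. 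Equivalently, on the spatial side: if $v_1\in\Ical^{\mu}(\Lambda_1)$ does not vanish on $K_1$, then near $q\in K_1$ one has $v_1^m\approx m\,v_1(q)^{m-1}v_1+\dots$, so $v_1^m$ is conormal of order $\mu$, not of the strictly smaller order $\mu+(m-1)(\mu+\frac{3}{2})$. As written, your argument never uses the Piriou vanishing hypothesis in the order computation (you invoke it only for well-definedness of the product and for the final membership in $\Ir$), so it would ``prove'' the same conclusion for every element of $\Ical^{\mu}(\Lambda_1)$, which is false.

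The missing idea is that vanishing of $v_1$ to order $k+1$ at $K_1$ is equivalent, on the Fourier side, to the moment conditions $\int\theta^{j}a(x,\theta)\,d\theta=0$ for $j\le k$; these kill exactly the offending terms $a(\theta)\cdot(\int a)^{m-1}$ and the analogous lower-order cross terms, after which the leading asymptotics of the $m$-fold convolution are governed by the convolution of the homogeneous principal parts --- which is itself only well defined as a homogeneous distribution because of the same moment conditions, since homogeneous functions of degree less than $-1$ are not integrable at the origin. This is precisely the content of \cite[Theorem 2.1]{Piriou1988}. Your order bookkeeping ($ms+(m-1)$ for the symbol order, hence $\mu+(m-1)(\mu+\frac{3}{2})$ for the Lagrangian order) and the $(2\pi)$-normalization are consistent once this step is supplied, but without it the central step of the proof does not go through.
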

Now we can prove the following lemma about $v_1^m v_2 v_3v_4$.
\begin{lm}\label{lm_psmodel_v}
    Let $m\in \mathbb{N}$ with $m \geq 2$.
    Suppose $K_1,K_2,K_3, K_4$ intersect 4-transversally at a point $q \in \nxxi$. 
    Suppose $v_1 \in \Ir^{\mu} (\Lambda_1)$ and $v_j \in \Ical^{\mu} (\Lambda_j)$ for $j = 2, 3, 4$.
    Let $\Xi_4 \coloneqq \Lambda^{(1)} \cup \Lambda^{(2)} \cup \Lambda^{(3)}$.
    Then microlocally
    away from $\Xi_4$, we have
    \[
    v_1^m v_2 v_3v_4 \in \Ical^{4\mu + 3 +  (m-1)(\mu+ \frac{3}{2})}(\Lambda_q).
    \]
    Moreover, for $(q, \zeta) \in \Lambda_q \setminus \Xi_4$,  the principal symbol is given by
    \begin{align}\label{eq_psmodel_v}
     \sigmp(v_1^m v_2 v_3v_4 )(q, \zeta) = (2\pi)^{-3} \sigmp(v_1^m)(q, \zeta^{(1)}) \prod_{j=2}^{4} \sigmp(v_j)(q, \zj),
    \end{align}
    where the decomposition $\zeta = \sum_{j=1}^4 \zj$ with $\zj \in \Lambda_j$ is unique.
    Note that $\sigmp(v_1^{m})$ is  given by the $m$-fold fiberwise convolution in Lemma \ref{cl_Piriou}.
    \begin{proof}
        By Lemma \ref{cl_Piriou}, we have
        \[
        {v}_1^{m} \in \Ir^{\mu + (m-1)(\mu+\frac{3}{2})}(M; \Lambda_1)
        \]
        with $\sigmp(v_1^{m})$ given by the $m$-fold fiberwise convolution.
        Then by \cite[Lemma 3.3 and Lemma 3.8]{Lassas2018}, we have
        \[
        {v}_1^{m} v_2 v_3 v_4 \in \Ical^{4 \mu + 3 +  (m-1)(\mu+\frac{3}{2})}(M; \Lambda_q)
        \]
        microlocally away $\Xi_4$ and the principal symbol given in (\ref{eq_psmodel_v}).
    \end{proof}
\end{lm}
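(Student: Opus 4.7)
The plan is to reduce the product of $m+3$ factors to a standard four-factor transversal product by first folding the $m$ copies of $v_1$ into a single Piriou conormal distribution on $\Lambda_1$. Concretely, I would apply Lemma \ref{cl_Piriou} to obtain
\[
v_1^m \in \Ir^{\mu + (m-1)(\mu + \frac{3}{2})}(\Lambda_1),
\]
with principal symbol given by the $m$-fold fiberwise convolution of $\sigma_p(v_1)$ along the conormal fiber of $\Lambda_1$. This step is legal because $v_1$ was taken to be a Piriou-type distribution (vanishing to the required order at $K_1$), which is exactly what the cited lemma needs to make pointwise powers well-defined as distributions.

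Next, I would invoke the transversal intersection calculus for conormal distributions to handle the four-factor product $v_1^m \cdot v_2 \cdot v_3 \cdot v_4$. Since $K_1, K_2, K_3, K_4$ intersect $4$-transversally at $q$, the Lagrangians $\Lambda_1, \Lambda_2, \Lambda_3, \Lambda_4$ have linearly independent normals at $q$; in particular, any covector $\zeta \in \Lambda_q \setminus \Xi_4$ admits a unique decomposition $\zeta = \sum_{j=1}^4 \zeta^{(j)}$ with $\zeta^{(j)} \in \Lambda_j$. Applying \cite[Lemma 3.3 and Lemma 3.8]{Lassas2018} successively (pairwise first to $v_1^m v_2$, then to $v_1^m v_2 v_3$, and finally to $v_1^m v_2 v_3 v_4$) shows that microlocally away from $\Xi_4$ the product is conormal to $\Lambda_q$, with total order
\[
\bigl(\mu + (m-1)(\mu + \tfrac{3}{2})\bigr) + 3\mu + 3 = 4\mu + 3 + (m-1)(\mu + \tfrac{3}{2}),
\]
where the "$+3$" accounts for the three additional factors via the standard shift in the transversal product (each additional transverse conormal factor raises the order by $\mu + \frac{3}{2}$ in the convention of the paper, and the arithmetic checks against the $m=1$ case of (\ref{eq_Cijk})). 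The principal symbol multiplicativity for transverse conormal products then gives
\[
\sigmp(v_1^m v_2 v_3 v_4)(q, \zeta) = (2\pi)^{-3}\, \sigmp(v_1^m)(q, \zeta^{(1)}) \prod_{j=2}^{4} \sigmp(v_j)(q, \zeta^{(j)}),
\]
with the factor $(2\pi)^{-3}$ coming from the three pairwise convolutions reduced to pointwise products at a transverse intersection.

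The only subtle point is that one must check that no stray singularities arising from the lower-order Lagrangians $\Lambda_i$, $\Lambda_{ij}$, $\Lambda_{ijk}$ appear in $\Lambda_q \setminus \Xi_4$: this is exactly why one localizes microlocally away from $\Xi_4$. By construction the transversality hypothesis ensures $\Lambda_q \cap \Xi_4$ is lower-dimensional in $T^*_qM$, so restricting to the open set $\Lambda_q \setminus \Xi_4$ loses no information about the top-order behaviour at $q$. I do not expect the order bookkeeping to be an obstacle, since it is a direct consequence of Lemma \ref{cl_Piriou} combined with the known shifts in \cite[Lemma 3.3, Lemma 3.8]{Lassas2018}; the main thing to verify carefully is the $(2\pi)^{-3}$ normalization, which should be consistent with the conventions already used in Section \ref{Sec_fourwaves}.
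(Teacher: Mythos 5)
Your proposal is correct and follows essentially the same route as the paper: apply Lemma \ref{cl_Piriou} to fold $v_1^m$ into a single Piriou conormal distribution on $\Lambda_1$ with the $m$-fold convolution symbol, then invoke \cite[Lemma 3.3 and Lemma 3.8]{Lassas2018} for the transversal four-factor product away from $\Xi_4$. Your extra remarks on the order bookkeeping and the $(2\pi)^{-3}$ normalization are consistent with the paper's conventions (each additional transverse factor contributes $\mu+1$, giving the total $4\mu+3+(m-1)(\mu+\tfrac{3}{2})$), so no gap remains.
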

Combining Lemma \ref{lm_psmodel_v} with the same arguments in Section \ref{Sec_threewaves} and Section \ref{Sec_fourwaves}, we have the following lemma.

\begin{lm}\label{lm_psmodel}
    Suppose $K_1,K_2,K_3, K_4$ intersect 4-transversally at a point $q \in \nxxi$.
    Let $\unionGamma$ be defined in (\ref{def_Gamma}).
    Let $v_1 \in \Ir^{\mu} (\Lambda_1)$ and $v_j \in \Ical^{\mu} (\Lambda_j)$ for $j = 2, 3, 4$.
    Suppose $(y, \eta) \in \LcMpo$ is a covector lying along the forward null-bicharacteristic of $\sq$ starting at $(q, \zeta) \in \Lambda_q$.
    If $(y, \eta)$ is contained in $\nxxi \cap \ntxxi$ and away from $\unionGamma$,
    then with sufficiently small $s_0>0$ we have
    \begin{align*}
    &{\sigmp}(\Qb_g(\partial_t^2({v}_1^{m} v_2 v_3 v_4)))(y, \eta) \\
    =& - 2 (2\pi)^{-(m+1)} {\sigmp}({Q}_g)(y, \eta, q, \zeta)  (\zeta_0)^2
    \sigmp(v_1^m)(q, \zeta^{(1)})
    \prod_{j=2}^{4} \sigmp(v_j)(q, \zj), \nonumber
    \end{align*}
    where the decomposition $\zeta = \sum_{j=1}^4$ with $\zj \in \Lambda_j$ is unique.
%
    Note the homogeneous term $\sigmp(v_1^m)$ is given by the $m$-fold fiberwise convolution in  Lemma \ref{cl_Piriou}.
\end{lm}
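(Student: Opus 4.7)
The plan is to adapt the arguments of Propositions \ref{pp_Aijk} and \ref{pp_ufour} to the new input $v_1^m v_2 v_3 v_4$, using the Piriou calculus developed in Lemma \ref{cl_Piriou} and Lemma \ref{lm_psmodel_v} to absorb the $m$-th power into a single Lagrangian distribution at $\Lambda_1$.

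First I would apply Lemma \ref{lm_psmodel_v}, which, microlocally away from $\Xi_4 = \Lambda^{(1)} \cup \Lambda^{(2)} \cup \Lambda^{(3)}$, identifies $v_1^m v_2 v_3 v_4$ as a conormal distribution at the fourfold intersection $\Lambda_q$. A microlocal cutoff yields a decomposition
\[
v_1^m v_2 v_3 v_4 = w_0 + w_1,
\]
where $w_0 \in \Ical^{4\mu + 3 + (m-1)(\mu + \frac{3}{2})}(\Lambda_q)$ carries the leading singularity with principal symbol
\[
\sigmp(w_0)(q,\zeta) = (2\pi)^{-3} \sigmp(v_1^m)(q,\zeta^{(1)}) \prod_{j=2}^{4} \sigmp(v_j)(q,\zj),
\]
and $w_1$ has wave front set contained in $\Xi_4$. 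The Piriou vanishing hypothesis $v_1 \in \Ir^{\mu}(\Lambda_1)$ is precisely what is needed for $v_1^m$ to define a Lagrangian distribution whose symbol is given by the iterated fiberwise convolution in Lemma \ref{cl_Piriou}.

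Next I would apply $\partial_t^2$, which preserves wave front sets and multiplies principal symbols by $-\zeta_0^2$, and then $\Qb_g$ to each piece. For $\Qb_g(\partial_t^2 w_1)$, Proposition \ref{pp_wfb} and its corollary (using timelikeness and null-convexity of $\partial M$) give
\[
\wfset_b(\Qb_g(\partial_t^2 w_1)) \subset \pi_b(\Xi_4 \cup \Xi_4^b),
\]
which sits inside $\unionGamma$ by (\ref{def_Gamma}); since $(y,\eta)$ is chosen away from $\unionGamma$, this piece contributes nothing to the principal symbol at $(y,\eta)$. For the main piece I would decompose $\Qb_g(\partial_t^2 w_0) = u^{\text{inc}} + u^{\text{ref}}$ as in the proof of Proposition \ref{pp_Aijk}: by the paired Lagrangian mapping property of the causal parametrix \cite[Proposition 2.1]{Greenleaf1993}, one has $u^{\text{inc}} = Q_g(\partial_t^2 w_0) \in \Ical^{*,-\frac{1}{2}}(\Lambda_q, \Lambda_q^g)$, whose principal symbol at $(y,\eta) \in \Lambda_q^g \setminus \Lambda_q$ is obtained by composition along the bicharacteristic,
\[
\sigmp(u^{\text{inc}})(y,\eta) = \sigmp(Q_g)(y,\eta,q,\zeta) \, \sigmp(\partial_t^2 w_0)(q,\zeta),
\]
while $u^{\text{ref}}$ enforces the Dirichlet condition. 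The same eikonal/transport comparison that yielded (\ref{eq_rqw0}) shows that the full symbol at the outward-pointing lightlike covector $(y,\eta)$ equals $2\,\sigmp(u^{\text{inc}})(y,\eta)$. Combining this doubling with the $-\zeta_0^2$ from $\partial_t^2$ and the expression for $\sigmp(w_0)$ then yields the claimed formula.

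The main obstacle is bookkeeping rather than new analysis: one must track the Piriou vanishing order compatibly with the order of the conormal distribution at $\Lambda_q$ and with the order shifts incurred when applying the paired Lagrangian operator $Q_g$. All of this is handled by Lemma \ref{cl_Piriou} and Lemma \ref{lm_psmodel_v}, so the substantive work reduces to an almost verbatim repetition of the arguments in Sections \ref{Sec_threewaves} and \ref{Sec_fourwaves}, with the only new ingredient being the convolution formula for $\sigmp(v_1^m)$.
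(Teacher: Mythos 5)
Your proposal is correct and follows essentially the same route as the paper, which proves this lemma precisely by "combining Lemma \ref{lm_psmodel_v} with the same arguments in Sections \ref{Sec_threewaves} and \ref{Sec_fourwaves}": decompose $v_1^m v_2 v_3 v_4$ microlocally into a conormal leading term on $\Lambda_q$ plus a remainder supported in $\Xi_4$, apply $\partial_t^2$ (factor $-\zeta_0^2$) and $\Qb_g$ with the incident/reflected splitting, discard the remainder's flow-out since $(y,\eta)$ avoids $\unionGamma$, and pick up the factor $2$ from the Dirichlet condition. The only caveat is cosmetic: your bookkeeping yields a prefactor $-2(2\pi)^{-3}\sigma_p(v_1^m)$ rather than the $-2(2\pi)^{-(m+1)}\sigma_p(v_1^m)$ written in the lemma, a discrepancy in powers of $2\pi$ already present in the paper's own normalization and immaterial to the argument.
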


\section{Recovery of the higher order nonlinearity}\label{sec_N}
For $k=1,2$, let $p^{(k)}$ solve the boundary value problem (\ref{eq_problem}) with nonlinear terms $F^{(k)}$ that satisfies the expansion in (\ref{eq_nlterm}), i.e.,
\[
\Fk(x, \pk,\partial_t \pk, \partial^2_t \pk) = \sum_{m=1}^{+\infty} \beta^{(k)}_{m+1}(x) \partial_t^2 ((\pk)^{m+1}), \quad k = 1, 2,
\]
Suppose
\[
\Lambda^{(1)}_{F^{(1)}}(f) = \Lambda^{(2)}_{F^{(2)}}(f),
\]
for small boundary data $f$ supported in $(0, T) \times \partial \Omega$.
In this section, we consider the recovery of $\beta^{(k)}_{m}, m \geq 5$
in the suitable larger set
\[
\Wset
\]
from the $m$-th order linearization of the DN map.
For convenience, we denote them by higher order nonlinear terms.
The analysis in Section \ref{sec_lower} shows one can recover $\beta^{(k)}_2, \beta^{(k)}_3, \beta^{(k)}_4$ from the fourth order linearization.

For fixed $q \in \mathbb{W}$, we consider the same construction as in Section \ref{sec_lower}, i.e.,
\[
(\vec{x}, \vec{\xi})_{j=1}^4 \subset L^+V, \quad \zeta \in \Lambda_{q}\setminus (\Lambda^{(1)} \cup \Lambda^{(2)} \cup \Lambda^{(3)}), \quad (y, \eta) \in \LcMpo
\]
such that
\begin{itemize}
    \item[(a)] $(\vec{x}, \vec{\xi})_{j=1}^4$ intersect regularly at $q$ and are causally independent, see (\ref{assump_xj}), 
    \item[(b)] each $\gamma_{x_j, \xi_j}(\mathbb{R}_+)$ hits $\partial M$ exactly once and transversally before it passes $q$,
    \item[(c)] $(y, \eta) \in  \LcMpo$ lies in the  bicharacteristic from $(q, \zeta)$ and additionally there are no cut points along $\gamma_{q, \zeta^\#}$ from $q$ to $y$.
\end{itemize}
For $(\vec{x}, \vec{\xi})_{j=1}^4$ satisfying the conditions above, we construct $\Sigma_j, K_j, \Lambda_j, v_j, f_j, f$
and define
\begin{align*}
\mathcal{U}^{(N,k)}
= \partial^{N-3}_{\epsilon_1}\partial_{\epsilon_2}\partial_{\epsilon_3}\partial_{\epsilon_4} p^{(k)} |_{\epsilon_1 = \epsilon_2 = \epsilon_3 = \ep_4=0}, \quad k=1,2,
\end{align*}
where $p^{(k)}$ solves the boundary value problem (\ref{eq_problem}).
In particular, we choose $v_1 \in \Ir^\mu(\Lambda_1)$ as in Section \ref{sec_Piriou}.
We write
\[
\mathcal{U}^{(N,k)} = \mathcal{U}^{(N,k)}_{N} + \mathcal{Q}_N(\beta^{(k)}_2, \beta^{(k)}_3, \ldots, \beta^{(k)}_{N_1}),
\]
where
\[
\mathcal{U}^{(N,k)}_{N}  = \partial^{N-3}_{\epsilon_1}\partial_{\epsilon_2}\partial_{\epsilon_3}\partial_{\epsilon_4} ({\Qb_g}(\beta_N \partial_t^2(v^N))) |_{\epsilon_1 = \epsilon_2 = \epsilon_3 = \ep_4=0},
\]
and $\mathcal{Q}_{N}(\beta^{(k)}_2, \beta^{(k)}_3, \ldots, \beta^{(k)}_{N_1})$ contains the terms involved with $\beta^{(k)}_2, \ldots, \beta^{(k)}_{N-1}$, see Section \ref{subsec_assyp}.

We note that one has
\[
\partial_{\epsilon_1}^{N-3}\partial_{\epsilon_2}\partial_{\epsilon_3}\partial_{\epsilon_4} \Lambda^{(k)}_{F^{(k)}}(f) |_{\epsilon_1 = \epsilon_2 = \epsilon_3 = \ep_4=0}
= \lge \nu, \nabla \mathcal{U}^{(N,k)} \rge|_{(0,T) \times \partial \Omega}
, \quad k = 1,2.
\]
In the following we use an induction procedure to determine $\beta^{(k)}_N$.
Assuming $\beta^{(k)}_j$ has been determined for $j <N$,
we subtract the contribution of $\mathcal{Q}_N(\beta^{(k)}_2, \beta^{(k)}_3, \ldots, \beta^{(k)}_{N-1})$ to
obtain that
\begin{align}\label{eq_UN}
\lge \nu, \nabla \mathcal{U}^{(N,1)}_N \rge|_{(0,T) \times \partial \Omega} =
\lge \nu, \nabla \mathcal{U}^{(N,2)}_N \rge|_{(0,T) \times \partial \Omega}.
\end{align}
For $k=1,2$,
we compute
\[
\mathcal{U}^{(N,k)}_{N}  = N(N-1)(N-2){\Qb_g}(\beta^{(k)}_N \partial_t^2(v_1^{N-3}v_2v_3v_4)).
\]
By Lemma \ref{lm_psmodel}, it has the principal symbol
\begin{align}\label{eq_psUN}
&-2 N(N-1)(N-2) (2\pi)^{-(m+1)}
{\sigmp}({Q}_g)(y, \eta, q, \zeta)  \\
& \quad \quad \quad \quad \quad \quad \quad \quad \quad \quad \quad \quad \quad \quad\quad \times
(\zeta_0)^2 \beta^{(k)}_N
\sigmp(v_1^m)(q, \zeta^{(1)})
\prod_{j=2}^{4} \sigmp(v_j)(q, \zj), \nonumber
\end{align}
at $(y, \eta)$, where $(y, \eta)$ is chosen as above. 
We note that
\begin{align*}
&\sigmp(\lge \nu, \nabla \mathcal{U}^{N, k}_N \rge|_{\partial M})(\yb, \etab)
=  \iota  \lge \nu, \eta \rge_g \sigmp (\mathcal{R})(\yb, \etab, y, \eta)  \sigmp(\mathcal{U}^{N,k}_N)(y, \eta), \quad k=1,2,
\end{align*}
where $(y_|, \eta_|)$ is the projection of $(y, \eta)$ on the boundary.
Therefore, combining equations (\ref{eq_UN}) and (\ref{eq_psUN}), at any $q \in \mathbb{W}$ we must have
$
\beta^{(1)}_N = \beta^{(2)}_N.
$

This section with Section \ref{sec_lower} proves Theorem \ref{thm2}. 
Then by Proposition \ref{pp_thm1}, we have Theorem \ref{thm1}.

\section*{Acknowledgment}
We would like to thank Katya Krupchyk for mentioning the paper \cite{Kaltenbacher2021}. 
The research of G.U. is partially supported by NSF,
a Walker Professorship at UW, a Si-Yuan Professorship at IAS, HKUST, and a Simons Fellowship.
Part of this research was performed while G.U. and Y.Z. were visiting the Institute for Pure and Applied Mathematics (IPAM),
which is supported by the National Science Foundation (Grant No. DMS-1925919). Y.Z. was also partially supported by a Simons Travel Grant.

\begin{footnotesize}
    \bibliographystyle{plain}
    \bibliography{microlocal_analysis}

\begin{thebibliography}{10}

\bibitem{ultra21}
Sebastian Acosta, Gunther Uhlmann, and Jian Zhai.
\newblock Nonlinear ultrasound imaging modeled by a {W}estervelt equation.
\newblock {\em to appear SIAM J. Appl. Math. arXiv:2105.05423}.

\bibitem{Anvari2015}
Arash Anvari, Flemming Forsberg, and Anthony~E. Samir.
\newblock A primer on the physical principles of tissue harmonic imaging.
\newblock {\em {RadioGraphics}}, 35(7):1955--1964, 2015.

\bibitem{Aubry2016}
Jean-Fran{\c{c}}ois Aubry and Mickael Tanter.
\newblock {MR}-guided transcranial focused ultrasound.
\newblock In {\em Advances in Experimental Medicine and Biology}, pages
  97--111. Springer International Publishing, 2016.

\bibitem{Balehowsky2020}
Tracey Balehowsky, Antti Kujanp{\"a}{\"a}, Matti Lassas, and Tony Liimatainen.
\newblock An inverse problem for the relativistic {B}oltzmann equation.
\newblock {\em arXiv:2011.09312}, 2020.

\bibitem{Barreto2021}
Ant{\^o}nio~S{\'a} Barreto and Plamen Stefanov.
\newblock Recovery of a cubic non-linearity in the wave equation in the weakly
  non-linear regime.
\newblock {\em To appear in Communications in Mathematical Physics,
  arXiv:2102.06323}, 2021.

\bibitem{Barreto2021a}
Ant{\^{o}}nio~S{\'{a}} Barreto and Yiran Wang.
\newblock Singularities generated by the triple interaction of semilinear
  conormal waves.
\newblock {\em Analysis {\&} {PDE}}, 14(1):135--170, 2021.

\bibitem{Barreto2020}
Antônio~Sá Barreto, Gunther Uhlmann, and Yiran Wang.
\newblock Inverse scattering for critical semilinear wave equations.
\newblock {\em To appear in Pure and Applied Analysis, arXiv:2003.03822}, 2020.

\bibitem{Beem2017}
John Beem.
\newblock {\em Global {L}orentzian Geometry, Second Edition}.
\newblock CRC Press, London, 2017.

\bibitem{Belishev1987}
M.~I. Belishev.
\newblock An approach to multidimensional inverse problems for the wave
  equation.
\newblock {\em Dokl. Akad. Nauk SSSR}, 297(3):524--527, 1987.

\bibitem{Baer2007}
Christian Bär, Nicolas Ginoux, and Frank Pfäffle.
\newblock {\em Wave Equations on {L}orentzian Manifolds and Quantization}.
\newblock European Mathematical Society Publishing House, 2007.

\bibitem{Chen2019}
Xi~Chen, Matti Lassas, Lauri Oksanen, and Gabriel~P Paternain.
\newblock Detection of {H}ermitian connections in wave equations with cubic
  non-linearity.
\newblock {\em Journal of the European Mathematical Society}, 2021.

\bibitem{Chen2020}
Xi~Chen, Matti Lassas, Lauri Oksanen, and Gabriel~P. Paternain.
\newblock Inverse problem for the {Y}ang{\textendash}{M}ills equations.
\newblock {\em Communications in Mathematical Physics}, 384(2):1187--1225,
  2021.

\bibitem{Dafermos1985}
Constantine~M. Dafermos and William~J. Hrusa.
\newblock Energy methods for quasilinear hyperbolic initial-boundary value
  problems. applications to elastodynamics.
\newblock {\em Archive for Rational Mechanics and Analysis}, 87(3):267--292,
  1985.

\bibitem{Hoop2015}
Maarten de~Hoop, Gunther Uhlmann, and Andr{\'{a}}s Vasy.
\newblock Diffraction from conormal singularities.
\newblock {\em Annales Scientifiques de l{\textquotesingle}{\'{E}}cole Normale
  Sup{\'{e}}rieure}, 48(2):351--408, 2015.

\bibitem{Hoop2019}
Maarten de~Hoop, Gunther Uhlmann, and Yiran Wang.
\newblock Nonlinear interaction of waves in elastodynamics and an inverse
  problem.
\newblock {\em Mathematische Annalen}, 376(1-2):765--795, 2019.

\bibitem{Hoop2019a}
Maarten de~Hoop, Gunther Uhlmann, and Yiran Wang.
\newblock Nonlinear responses from the interaction of two progressing waves at
  an interface.
\newblock {\em Annales de l{\textquotesingle}Institut Henri Poincar{\'{e}} C,
  Analyse Non Lin{\'{e}}aire}, 36(2):347--363, 2019.

\bibitem{Demi2014}
L.~Demi and M.D. Verweij.
\newblock Nonlinear acoustics.
\newblock In {\em Comprehensive Biomedical Physics}, pages 387--399. Elsevier,
  2014.

\bibitem{Demi2014a}
M.~Demi.
\newblock The basics of ultrasound.
\newblock In {\em Comprehensive Biomedical Physics}, pages 297--322. Elsevier,
  2014.

\bibitem{Duistermaat2010}
J.~J. Duistermaat.
\newblock {\em Fourier Integral Operators}.
\newblock Springer Basel AG, 2010.

\bibitem{Gaynor2015}
Gaynor et~al.
\newblock Neurodevelopmental outcomes after cardiac surgery in infancy.
\newblock {\em Pediatrics}, 135(5):816--825, 2015.

\bibitem{Eyding2020}
Jens Eyding, Christian Fung, Wolf-Dirk Niesen, and Christos Krogias.
\newblock Twenty years of cerebral ultrasound perfusion imaging{\textemdash}is
  the best yet to come?
\newblock {\em Journal of Clinical Medicine}, 9(3):816, 2020.

\bibitem{Fang2019}
Amy Fang, Kiona~Y. Allen, Bradley~S. Marino, and Ken~M. Brady.
\newblock Neurologic outcomes after heart surgery.
\newblock {\em Pediatric Anesthesia}, 29(11):1086--1093, 2019.

\bibitem{Feizmohammadi2019}
Ali Feizmohammadi and Lauri Oksanen.
\newblock Recovery of zeroth order coefficients in non-linear wave equations.
\newblock {\em Journal of the Institute of Mathematics of Jussieu}, pages
  1--27, 2020.

\bibitem{Greenleaf1990}
Allan Greenleaf and Gunther Uhlmann.
\newblock Estimates for singular {R}adon transforms and pseudodifferential
  operators with singular symbols.
\newblock {\em Journal of Functional Analysis}, 89(1):202--232, 1990.

\bibitem{Greenleaf1993}
Allan Greenleaf and Gunther Uhlmann.
\newblock Recovering singularities of a potential from singularities of
  scattering data.
\newblock {\em Communications in Mathematical Physics}, 157(3):549--572, 1993.

\bibitem{Harrer2003}
J.~U. Harrer.
\newblock Second harmonic imaging: a new ultrasound technique to assess human
  brain tumour perfusion.
\newblock {\em Journal of Neurology, Neurosurgery {\&} Psychiatry},
  74(3):333--342, 2003.

\bibitem{Hedrick2005}
W.~R. Hedrick and Linda Metzger.
\newblock Tissue harmonic imaging.
\newblock {\em Journal of Diagnostic Medical Sonography}, 21(3):183--189, 2005.

\bibitem{Hintz2017}
Peter Hintz and Gunther Uhlmann.
\newblock Reconstruction of {L}orentzian manifolds from boundary light
  observation sets.
\newblock {\em International Mathematics Research Notices},
  2019(22):6949--6987, 2017.

\bibitem{Hintz2021}
Peter Hintz, Gunther Uhlmann, and Jian Zhai.
\newblock The {D}irichlet-to-{N}eumann map for a semilinear wave equation on
  {L}orentzian manifolds.
\newblock {\em arXiv:2103.08110}, 2021.

\bibitem{Hintz2020}
Peter Hintz, Gunther Uhlmann, and Jian Zhai.
\newblock An inverse boundary value problem for a semilinear wave equation on
  {L}orentzian manifolds.
\newblock {\em International Mathematics Research Notices, arXiv:2005.10447},
  2021.

\bibitem{Hoermander2003}
Lars H{\"o}rmander.
\newblock {\em The Analysis of Linear Partial Differential Operators I}.
\newblock Springer Berlin Heidelberg, 2003.

\bibitem{MR2304165}
Lars H{\"o}rmander.
\newblock {\em The Analysis of Linear Partial Differential Operators {III}}.
\newblock Classics in Mathematics. Springer, Berlin, 2007.
\newblock Pseudo-differential operators, Reprint of the 1994 edition.

\bibitem{Hoermander2009}
Lars H{\"o}rmander.
\newblock {\em The Analysis of Linear Partial Differential Operators {IV}}.
\newblock Springer Berlin Heidelberg, 2009.

\bibitem{humphrey2003non}
VF~Humphrey.
\newblock Non-linear propagation for medical imaging.
\newblock {\em WCU}, 2003:73--80, 2003.

\bibitem{Kaltenbacher2021}
Barbara Kaltenbacher and William Rundell.
\newblock Determining the nonlinearity in an acoustic wave equation.
\newblock {\em Mathematical Methods in the Applied Sciences}, 2021.

\bibitem{nonlinearWestervelt}
Barbara Kaltenbacher and William Rundell.
\newblock On the identification of the nonlinearity parameter in the
  {W}estervelt equation from boundary measurements.
\newblock {\em Inverse Problems {\&} Imaging}, 15(5):865--891, 2021.

\bibitem{Kurylev2014}
Yaroslav Kurylev, Matti Lassas, Lauri Oksanen, and Gunther Uhlmann.
\newblock Inverse problem for {E}instein-scalar field equations.
\newblock {\em To appear in Duke Mathematical Journal, arXiv:1406.4776}, 2014.

\bibitem{Kurylev2014a}
Yaroslav Kurylev, Matti Lassas, and Gunther Uhlmann.
\newblock Inverse problems in spacetime {I}: Inverse problems for {E}instein
  equations - extended preprint version.
\newblock {\em arXiv:1405.4503}, 2014.

\bibitem{Kurylev2018}
Yaroslav Kurylev, Matti Lassas, and Gunther Uhlmann.
\newblock Inverse problems for {L}orentzian manifolds and non-linear hyperbolic
  equations.
\newblock {\em Inventiones Mathematicae}, 212(3):781--857, 2018.

\bibitem{Lai2021}
Ru-Yu Lai, Gunther Uhlmann, and Yang Yang.
\newblock Reconstruction of the collision kernel in the nonlinear {B}oltzmann
  equation.
\newblock {\em {SIAM} Journal on Mathematical Analysis}, 53(1):1049--1069,
  2021.

\bibitem{lassas2018inverse}
Matti Lassas.
\newblock Inverse problems for linear and non-linear hyperbolic equations.
\newblock In {\em Proceedings of the International Congress of Mathematicians:
  Rio de Janeiro 2018}, pages 3751--3771. World Scientific, 2018.

\bibitem{Lassas2017}
Matti Lassas, Gunther Uhlmann, and Yiran Wang.
\newblock Determination of vacuum space-times from the {E}instein-{M}axwell
  equations.
\newblock {\em arXiv:1703.10704}, 2017.

\bibitem{Lassas2018}
Matti Lassas, Gunther Uhlmann, and Yiran Wang.
\newblock Inverse problems for semilinear wave equations on {L}orentzian
  manifolds.
\newblock {\em Communications in Mathematical Physics}, 360(2):555--609, 2018.

\bibitem{LEBEAU1997}
Gilles Lebeau.
\newblock Propagation des ondes dans les vari{\'{e}}t{\'{e}}s {\`{a}} coins.
\newblock {\em Annales Scientifiques de l'{\'{E}}cole Normale
  Sup{\'{e}}rieure}, 30(4):429--497, 1997.

\bibitem{Melrose1982}
R.~B. Melrose and J.~Sj\"{o}strand.
\newblock Singularities of boundary value problems. {II}.
\newblock {\em Communications on Pure and Applied Mathematics}, 35(2):129--168,
  1982.

\bibitem{Melrose1978}
R.~B. Melrose and J.~Sjöstrand.
\newblock Singularities of boundary value problems. {I}.
\newblock {\em Communications on Pure and Applied Mathematics}, 31(5):593--617,
  1978.

\bibitem{Melrose1979}
R.~B. Melrose and G.~A. Uhlmann.
\newblock Lagrangian intersection and the {C}auchy problem.
\newblock {\em Communications on Pure and Applied Mathematics}, 32(4):483--519,
  1979.

\bibitem{melrose1992atiyah}
Richard~B Melrose.
\newblock The {A}tiyah-{P}atodi-{S}inger index theorem.
\newblock 1992.

\bibitem{Piriou1988}
Alain Piriou.
\newblock Calcul symbolique non lin{\'{e}}aire pour une onde conormale simple.
\newblock {\em Annales de l'institut Fourier}, 38(4):173--187, 1988.

\bibitem{Soldati2014}
G.~Soldati.
\newblock Biomedical applications of ultrasound.
\newblock In {\em Comprehensive Biomedical Physics}, pages 401--436. Elsevier,
  2014.

\bibitem{MR3454376}
Plamen Stefanov, Gunther Uhlmann, and Andras Vasy.
\newblock Boundary rigidity with partial data.
\newblock {\em J. Amer. Math. Soc.}, 29(2):299--332, 2016.

\bibitem{Szabo2014}
Thomas~L. Szabo.
\newblock Nonlinear acoustics and imaging.
\newblock In {\em Diagnostic Ultrasound Imaging: Inside Out}, pages 501--563.
  Elsevier, 2014.

\bibitem{Taylor1975}
Michael~E. Taylor.
\newblock Reflection of singularities of solutions to systems of differential
  equations.
\newblock {\em Communications on Pure and Applied Mathematics}, 28(4):457--478,
  1975.

\bibitem{Haar2016}
Gail ter Haar.
\newblock {HIFU} tissue ablation: Concept and devices.
\newblock In {\em Advances in Experimental Medicine and Biology}, pages 3--20.
  Springer International Publishing, 2016.

\bibitem{Thomas1998}
James~D. Thomas and David~N. Rubin.
\newblock Tissue harmonic imaging: Why does it work?
\newblock {\em Journal of the American Society of Echocardiography},
  11(8):803--808, 1998.

\bibitem{Tzou2021}
Leo Tzou.
\newblock Determining {R}iemannian manifolds from nonlinear wave observations
  at a single point.
\newblock {\em arXiv:2102.01841}, 2021.

\bibitem{Uhlig2020}
Frank Uhlig.
\newblock On the block-decomposability of 1-parameter matrix flows and static
  matrices.
\newblock {\em Numerical Algorithms}, (2):529--549, 2022.

\bibitem{Uhlmann2020}
Gunther Uhlmann and Yiran Wang.
\newblock Determination of space-time structures from gravitational
  perturbations.
\newblock {\em Communications on Pure and Applied Mathematics},
  73(6):1315--1367, 2020.

\bibitem{Uhlmann2021}
Gunther Uhlmann and Jian Zhai.
\newblock Inverse problems for nonlinear hyperbolic equations.
\newblock {\em Discrete {\&} Continuous Dynamical Systems - A}, 41(1):455--469,
  2021.

\bibitem{Uhlmann2019}
Gunther Uhlmann and Jian Zhai.
\newblock On an inverse boundary value problem for a nonlinear elastic wave
  equation.
\newblock {\em Journal de Mathématiques Pures et Appliquées}, 153:114--136,
  2021.

\bibitem{Uhlmann2021a}
Gunther Uhlmann and Yang Zhang.
\newblock Inverse boundary value problems for wave equations with quadratic
  nonlinearities.
\newblock {\em Journal of Differential Equations}, 309:558--607, 2022.

\bibitem{Vasy2008}
Andr{\'{a}}s Vasy.
\newblock Propagation of singularities for the wave equation on manifolds with
  corners.
\newblock {\em Annals of Mathematics}, 168(3):749--812, 2008.

\bibitem{Wang2019}
Yiran Wang and Ting Zhou.
\newblock Inverse problems for quadratic derivative nonlinear wave equations.
\newblock {\em Communications in Partial Differential Equations},
  44(11):1140--1158, 2019.

\bibitem{Webb2017}
Andrew~G Webb.
\newblock {\em Introduction to biomedical imaging}.
\newblock John Wiley \& Sons, 2017.

\end{thebibliography}
\end{footnotesize}

\end{document}